\newtheorem{lemma}{Lemma}[section]
\newtheorem{remark}[lemma]{Remark}
\newtheorem{theorem}[lemma]{Theorem}
\newtheorem{corollary}[lemma]{Corollary}
\newtheorem{conjecture}[lemma]{Conjecture}
\newtheorem{proposition}[lemma]{Proposition}
\renewcommand{\subset}{\subseteq}
\begin{document}

	\title[Parabolic recursions for Kazhdan-Lusztig polynomials]{Parabolic recursions for Kazhdan-Lusztig polynomials and the hypercube decomposition}
	
	
	\begin{abstract}
	We employ general parabolic recursion methods to demonstrate the recently devised hypercube formula for Kazhdan-Lusztig polynomials of $S_n$, and establish its generalization to the full setting of a finite Coxeter system through algebraic proof.

We introduce procedures for positive decompositions of $q$-derived Kazhdan-Lusztig polynomials within this setting, that utilize classical Hecke algebra positivity phenomena of Dyer-Lehrer and Grojnowski-Haiman. This leads to a distinct algorithmic approach to the subject, based on induction from a parabolic subgroup.

We propose suitable weak variants of the combinatorial invariance conjecture and verify their validity for permutation groups.

	\end{abstract}

	\author{Maxim Gurevich}
	\address{Department of Mathematics, Technion -- Israel Institute of Technology, Haifa, Israel.}
	\email{maxg@technion.ac.il}
	
	\author{Chuijia Wang}
	\address{Department of Mathematics, Technion -- Israel Institute of Technology, Haifa, Israel.}
	\email{wangcj@campus.technion.ac.il}
	
	\date{\today}
	
	\maketitle
	
	\section{Introduction}

The Kazhdan-Lusztig polynomials are integral invariants associated with each pair of elements in a Coxeter group. These polynomials are renowned for their ability to convey intricate details on the symmetric structures in which the group appears, such as multiplicities of Lie group representations, dimensions of cohomology spaces of complex varieties, and transition matrices between distinguished bases of Hecke algebras. Given their importance, it can be argued that finding effective methods to access the information encoded in them and provide a concrete algorithmic description of the polynomials is crucial for gaining a fundamental understanding of symmetry.

In a recent work \cite{BBDVW21,BBDVW-nat}, artificial intelligence methods were used to tackle this task for the case of a Kazhdan-Lusztig polynomial $P_{\sigma,\omega}$ attached to a pair of permutations $\sigma,\omega\in S_n$ in a symmetric group. The researchers produced a novel formula, which they then proved using machinery of equivariant intersection cohomology. The formula deals with so-called $q$-derived Kazhdan-Lusztig polynomials $P^\partial_{\sigma,\omega}$, out of which the value of $P_{\sigma,\omega}$ is easily extracted.

Let us state an ad-hoc version of it suitable for the subsequent discussion. We write $\mathbb{Z}[q]^+$ for the semiring of integer polynomials with non-negative coefficients in the formal variable $q$.

\begin{theorem}\cite[Theorem 3.7]{BBDVW21}\label{thm:wilmson}
  For $\sigma,\omega\in S_n$, there exist polynomials $Q_{\sigma,\omega}\in \mathbb{Z}[q]^+$ and $I_{\sigma,\omega,\zeta}\in\mathbb{Z}[q]^+$, for all $\zeta\in S_{n-1}$, such that
  \[
  P^{\partial}_{\sigma,\omega} = Q_{\sigma,\omega} + \sum_{\zeta\in S_{n-1}} I_{\sigma,\omega, \zeta}\;
  \]
  holds.

The values of each of $I_{\sigma,\omega,\zeta}$ are linearly computable out of the values of Kazhdan-Lusztig polynomials for the group $S_{n-1}$ and values of $P_{\kappa,\omega}$ for $\sigma < \kappa\leq \omega$ in the Bruhat order on $S_n$.

The value of $Q_{\sigma,\omega}$ is combinatorially computable out of data supplied by the Bruhat order on the interval $\{\kappa\in S_n\;:\; \sigma\leq \kappa\leq \omega\}$, the location of the coset $S_{n-1}\cdot \sigma$ in that interval and values of $P_{\kappa,\omega}$ as above.

\end{theorem}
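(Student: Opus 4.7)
The plan is to work entirely inside the Iwahori-Hecke algebra $H_n$ of $S_n$, decompose it as a free right module over the parabolic subalgebra $H_{n-1}$, and analyze the Kazhdan-Lusztig basis element $C'_\omega$ through this decomposition. Writing the expansion of $C'_\omega$ in the $T$-basis yields $P_{\sigma,\omega}$ as a coefficient; by collecting the contributions according to cosets $S_{n-1}\zeta$ and then extracting the appropriate derivative, one obtains a natural splitting of $P^{\partial}_{\sigma,\omega}$ into coset-indexed pieces.

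First I would carry out the parabolic recursion step. The Grojnowski-Haiman positivity theorem, applied to the right $H_{n-1}$-module structure on $H_n$, guarantees that when $C'_\omega$ is written in terms of the basis $\{C'_\zeta T_\tau\}$ (over pairs of $\zeta\in S_{n-1}$ and minimal coset representatives $\tau$), the transition coefficients lie in $\mathbb{Z}[q]^+$. After projecting onto the coset of $\sigma$ and extracting the $q$-derived piece, each of these coefficients becomes a $\mathbb{Z}[q]^+$-linear combination of products of $S_{n-1}$-Kazhdan-Lusztig polynomials with scalars of the form $P_{\kappa,\omega}$ for $\sigma<\kappa\leq\omega$. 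This defines the polynomials $I_{\sigma,\omega,\zeta}$ and establishes simultaneously their positivity and the linear computability claim.

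Second I would isolate the remainder $Q_{\sigma,\omega}=P^{\partial}_{\sigma,\omega}-\sum_{\zeta\in S_{n-1}} I_{\sigma,\omega,\zeta}$. The task is twofold: show $Q_{\sigma,\omega}\in \mathbb{Z}[q]^+$, and show that it depends only on the combinatorial data prescribed by the statement. For positivity I would invoke Dyer-Lehrer positivity of $R$-polynomials: after the subtraction, the residual admits a chain-level expansion in the Bruhat interval $[\sigma,\omega]$ whose summands are individually non-negative by Dyer-Lehrer. For the combinatorial character of $Q_{\sigma,\omega}$, I would verify that this chain expansion, together with the recorded position of the coset $S_{n-1}\cdot\sigma$ in $[\sigma,\omega]$ and the inductively available values $P_{\kappa,\omega}$ with $\kappa>\sigma$, suffices to reconstruct $Q_{\sigma,\omega}$ from the abstract Bruhat poset structure alone.

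The main obstacle will be precisely this combinatoriality of $Q_{\sigma,\omega}$. The parabolic decomposition naturally produces coefficients that remember Coxeter-theoretic data about the ambient group, such as the identity of simple reflections labelling covers in $[\sigma,\omega]$, and it is not a priori obvious that all such extrinsic information either cancels or can be recovered from the bare poset structure together with the singled-out coset $S_{n-1}\cdot\sigma$. Handling this will likely require a hypercube-style parametrization of the chains issuing from $\sigma$ and a cancellation argument exploiting the symmetries of $[\sigma,\omega]$ under the right action of $S_{n-1}$; the positivity guarantees from Grojnowski-Haiman and Dyer-Lehrer should provide enough rigidity in the resulting identities to force the required independence from the ambient group.
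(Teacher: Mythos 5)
You have correctly identified the two positivity engines that the paper also uses: Grojnowski--Haiman positivity for the $J$-hybrid basis gives the $I$-part of the decomposition, and Dyer--Lehrer positivity underlies the $Q$-part. But there are two genuine gaps in how you propose to wire them together.

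First, the decomposition $P^{\partial}_{\sigma,\omega}=Q_{\sigma,\omega}+\sum_{\zeta}I_{\sigma,\omega,\zeta}$ is not obtained by ``collecting contributions according to cosets.'' If you simply expand $c_\omega$ in the Grojnowski--Haiman basis $\{c_{\kappa}h_{{}^J\upsilon}\}$ and extract the $q$-derived piece of the coefficient of $h_\sigma$, you get only the $I$-part; there is no remainder from that expansion, and $\sum_\zeta I_{\sigma,\omega,\zeta}\neq P^\partial_{\sigma,\omega}$ in general. What is missing is the algebraic identity, at the heart of the paper's Section 3.2, that for the dual Dyer--Lehrer bases $f^{\sigma,\tau}$ one has
\[
d^\ast(h^\sigma)-h^\sigma \;=\; \bigl(f^{\sigma,\tau}-h^\sigma\bigr)\;-\;d^\ast\!\bigl(f^{\sigma,\omega_0\tau}-h^\sigma\bigr)\,,
\]
which, after pairing with $c_\omega$ and dividing by $\alpha$, produces $\check P^\partial_{\sigma,\omega}=\check I^\tau_{\sigma,\omega}+\check Q^\tau_{\sigma,\omega}$ with the crucial symmetry $\check Q^\tau_{\sigma,\omega}=d(\check I^{\omega_0\tau}_{\sigma,\omega})$. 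Without this identity you have no principled way to carve out the $Q$-piece, and the decomposition you describe is underdetermined.

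Second, your positivity argument for $Q_{\sigma,\omega}$ does not work as stated. You invoke ``Dyer--Lehrer positivity of $R$-polynomials'' and hope for a chain-level expansion with termwise non-negative contributions. But the Dyer--Lehrer theorem is about non-negativity of the transition matrix from the twisted bases $\{h_\tau^{-1}h_{\tau\omega}\}$ to the canonical basis, not about $R$-polynomials, and the relevant $J$-relative $R$-polynomials have an expansion in powers of $(-\alpha)$, i.e.\ $(q-1)$, with genuine sign alternations (compare Proposition 3.2 of the paper). The terms in the Bruhat-chain expansion of $Q^J_{\sigma,\omega}$ are \emph{not} individually non-negative. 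The actual positivity proof is global: $\check Q^{\omega_0^J}_{\sigma,\omega}=d(\check I^{\omega_0\omega_0^J}_{\sigma,\omega})$, and $\check I^{\omega_0\omega_0^J}_{\sigma,\omega}\in\mathcal L^+$ by Dyer--Lehrer positivity combined with the monotonicity of $\tau\mapsto\check I^\tau$ along the weak order. Finally, the combinatoriality of $Q_{\sigma,\omega}$ --- which you correctly flag as the main obstacle --- is not obtained by a cancellation or rigidity argument; it follows from an explicit closed formula $R_{\sigma,\omega,J}=\sum_{B}(q-1)^{|B|}$ in terms of subsets $B$ for which $\omega$ is the Bruhat supremum of $\{\sigma\}\cup\{(i,n)\sigma\}_{i\in B}$, which manifestly depends only on the poset $[\sigma,\omega]$ together with the marking of the edges coming from transpositions $(i,n)$, i.e.\ the position of the coset $S_{n-1}\sigma$.
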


The formula in \Cref{thm:wilmson} is referred to as the \textit{hypercube decomposition}, because of the nature of the algorithm producing $Q_{\sigma,\omega}$. It was further conjectured \cite[Conjecture 3.8]{BBDVW21} that the input required for that latter computation does not require the knowledge of the location of the coset inside the Bruhat interval. This property would then imply the celebrated Combinatorial Invariance Conjecture (see discussion in \Cref{sect:cic}).

\subsection{Main results}

Our work has the combined goal of generalizing the formula of \Cref{thm:wilmson} to the case of a finite Coxeter group $W$ in place of $S_n$, while explaining and reproving the original permutations case in terms native to the combinatorics of Hecke algebras.

We provide a general recursion procedure for a Kazhdan-Lusztig polynomial attached to a pair $\sigma,\omega\in W$, in terms of Kazhdan-Lusztig polynomials attached to elements of a chosen \textit{parabolic subgroup} $W_J < W$.

\begin{theorem}\label{thm:A}
Given a finite Coxeter group $W$ with a set of Coxeter generators $S$, for each choice of a subset $J\subset S$, there is a collection of Laurent polynomials $R_{\kappa_1,\kappa_2,J}\in \mathbb{Z}[q^{1/2},q^{-1/2}]$, for all $\kappa_1,\kappa_2\in W$, such that the following holds.

\begin{enumerate}
\item
For all $\sigma,\omega\in W$, there is a decomposition
\[
P^{\partial}_{\sigma,\omega} = Q^J_{\sigma,\omega} + I^J_{\sigma,\omega}\;,
\]
where $Q^J_{\sigma,\omega}, I^J_{\sigma,\omega}\in \mathbb{Z}[q]^+$, so that the identity
\[
Q^J_{\sigma,\omega} =  \frac{q^{\ell(\omega)-\ell(\sigma)}}{1-q} d\left( \sum_{\sigma < \kappa\leq \omega} R_{\sigma,\kappa,J} P_{\kappa, \omega}\right)\;
\]
is satisfied.

Here, $\ell$ is the length function on $W$ and $d$ is the ring involution on Laurent polynomials given by $d(q) = q^{-1}$.

\item\label{it:intr2}
The polynomial $ I^J_{\sigma,\omega}$ is decomposed into a sum
\[
I^J_{\sigma,\omega} = \sum_{\kappa\in W_J\;:\; \sigma < \kappa\,\cdot\,{}^J\sigma \leq \omega } P^{\partial}_{\sigma_J,\kappa} \gamma'_{\kappa}\;,
\]
with $\gamma'_\kappa\in \mathbb{Z}[q]^+$, for all $\kappa\in W_J= \langle J\rangle$ (the parabolic subgroup generated by $J$).

Here, $\sigma = \sigma_J {}^J \sigma$ with $\sigma_J \in W_J$  and ${}^J\sigma\in W$ is the minimal length representative of the coset $W_J \sigma$.
\item
The tuple of polynomials $\{\gamma'_\kappa\}_{\kappa\in W_J}$ is the solution to the linear system
\[
	\left( P_{\zeta, \kappa}\right)_{\zeta,\kappa\in W_J}\cdot (\gamma'_{\kappa})_{\kappa\in W_J} = (P_{\zeta\,\cdot\,{}^J\sigma, \omega})_{\zeta\in W_J}\;,
\]
when written in matrix notation.

\end{enumerate}
\end{theorem}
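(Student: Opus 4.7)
The plan is to work inside the Iwahori-Hecke algebra of $W$ and exploit the parabolic coset decomposition $W = \bigsqcup_{w \in {}^J W} W_J \cdot w$ together with bar-invariance of the Kazhdan-Lusztig basis elements.

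First I would verify that the linear system in part (3) uniquely determines $\{\gamma'_\kappa\}_{\kappa\in W_J}$. Since $P_{\zeta,\kappa}=0$ unless $\zeta \leq \kappa$ and $P_{\kappa,\kappa}=1$, the matrix $(P_{\zeta,\kappa})_{\zeta,\kappa\in W_J}$ is unitriangular with respect to any linear extension of the Bruhat order on $W_J$, hence invertible over $\mathbb{Z}[q]$. This legitimately defines the $\gamma'_\kappa$; I would then take $I^J_{\sigma,\omega}$ to be given by the formula in part (2) and simply set $Q^J_{\sigma,\omega} := P^{\partial}_{\sigma,\omega} - I^J_{\sigma,\omega}$. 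The content of the theorem is then (a) identifying $Q^J_{\sigma,\omega}$ with the claimed closed form, and (b) proving both summands lie in $\mathbb{Z}[q]^+$.

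For (a), the central computation would start from the expansion of the KL basis element $\sum_{\kappa\leq \omega} P_{\kappa,\omega} T_\kappa$ and the defining identity of $P^{\partial}$ in terms of the standard basis. I would substitute the coset factorization $\kappa = \kappa_J\cdot {}^J\kappa$ and split the sum according to whether ${}^J\kappa = {}^J\sigma$ or ${}^J\kappa \neq {}^J\sigma$. Using bar-invariance and the defining linear system of (3), the ``parallel coset'' part (${}^J\kappa = {}^J\sigma$) collapses precisely to $I^J_{\sigma,\omega}$, because the matrix identity $\sum_\kappa P_{\zeta,\kappa}\gamma'_\kappa = P_{\zeta \cdot {}^J\sigma,\omega}$ is exactly what is needed to convert the parabolic expansion of $P^{\partial}_{\sigma_J,\kappa}$ into the relevant sum of $P_{\kappa',\omega}$'s. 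What remains, after multiplication by $q^{\ell(\omega)-\ell(\sigma)}/(1-q)$ and passage through the involution $d$, yields the formula for $Q^J$; the Laurent polynomials $R_{\sigma,\kappa,J}$ emerge as the residual coefficients and should be identifiable with Deodhar-type parabolic $R$-polynomials relative to $J$.

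The remaining, and I expect hardest, task is the positivity assertions. For $I^J$ the decomposition reduces positivity to two ingredients: $P^{\partial}_{\sigma_J,\kappa} \in \mathbb{Z}[q]^+$ (known from Kazhdan-Lusztig positivity applied inside $W_J$) and $\gamma'_\kappa \in \mathbb{Z}[q]^+$. The latter is the principal obstacle: it cannot be read off the linear system directly, and will require invoking the Grojnowski-Haiman positivity phenomenon advertised in the abstract, which is tailored to exactly this kind of inversion of a unitriangular $P$-matrix against a vector of KL values. For $Q^J$, positivity should follow from the Dyer-Lehrer positivity theorem on parabolic $R$-polynomials combined with ordinary KL positivity of the $P_{\kappa,\omega}$'s; the delicate bookkeeping will be to track the interplay of $q$ and $q^{-1}$ under the involution $d$, and to verify that the apparent pole from the factor $\frac{1}{1-q}$ genuinely cancels, so that the final output lies in $\mathbb{Z}[q]^+$ rather than merely in $\mathbb{Z}[q^{1/2},q^{-1/2}]$.
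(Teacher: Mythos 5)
Your setup for parts (2) and (3) is essentially right: the linear system in (3) is unitriangular with diagonal entries equal to $1$ and so determines $\gamma'_\kappa$, these are exactly the Grojnowski--Haiman coefficients $\gamma^J_{\kappa\cdot{}^J\sigma,\omega}$ whose positivity is the GH07 theorem, the parallel-coset part of the standard recursion \eqref{eq:1} does collapse to $I^J_{\sigma,\omega}$ via the coset-restriction property of $R$-polynomials, and positivity of $P^\partial_{\sigma_J,\kappa}$ does follow from KL positivity together with the degree bound (the paper derives it differently, from the same Dyer--Lehrer filtration, but your route works). The decisive gaps are in the handling of $Q^J$ and the polynomials $R_{\sigma,\kappa,J}$.

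First, the identification of $R_{\sigma,\kappa,J}$ with Deodhar-type parabolic $R$-polynomials is incorrect. The paper defines them as $\check{R}_{\sigma,\omega,J}=\langle f^{\sigma,\omega_0\omega_0^J},h_\omega\rangle$, i.e. as transition coefficients from a Dyer--Lehrer twisted standard basis $\{h_{\omega_0\omega_0^J}^{-1}h_{\omega_0\omega_0^J\,\cdot\,}\}$ to the standard basis; this is a genuinely different family (indexed by group elements, not cosets, and not previously studied). Your ``residual coefficient'' from splitting \eqref{eq:1} is just $R_{\sigma,\kappa}$ restricted to cross-coset $\kappa$, which is \emph{not} $R_{\sigma,\kappa,J}$; matching the two formulas for $Q^J$ requires the identity $\check{Q}^\tau_{\sigma,\omega}=d(\check{I}^{\omega_0\tau}_{\sigma,\omega})$, which in turn rests on the symmetry $d^\ast(f^{\omega,\tau})=f^{\omega,\omega_0\tau}$ internal to the Dyer--Lehrer setup. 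Without that framework you cannot reach the closed form in (1).

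Second, and more importantly, your plan for positivity of $Q^J$ would fail. You propose to get it from ``positivity of parabolic $R$-polynomials'' combined with KL positivity of $P_{\kappa,\omega}$, as if $\sum_\kappa R_{\sigma,\kappa,J}P_{\kappa,\omega}$ were a sum of products of positives. It is not: $R$-polynomials of any flavor have signs, and neither the factor $1/(1-q)$ nor the involution $d$ preserves positivity. The paper's actual mechanism is orthogonal: it shows (Proposition~\ref{prop:posIQ}) that the family $\tau\mapsto\check{I}^\tau_{\sigma,\omega}=\alpha^{-1}\langle f^{\sigma,\tau}-h^\sigma,c_\omega\rangle$ is monotone increasing in $\mathcal{L}^+$ along weak order covers $\tau<s\tau$, the increment being a single Dyer--Lehrer coefficient $\langle f^{\sigma',\tau},c_\omega\rangle$, whose nonnegativity is the DL90 theorem on transitions from twisted standard bases to the \emph{canonical} basis. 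Positivity of $Q^J_{\sigma,\omega}$ then comes for free from $Q^J = d(\check{I}^{\omega_0\omega_0^J}_{\sigma,\omega})$ (up to normalization), not from any term-by-term product positivity in the sum-over-$\kappa$ formula. This telescoping argument is the idea your proposal is missing.
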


The Laurent polynomials $R_{\sigma,\omega,J}$, that we refer to as $J$-relative $R$-polynomials, play a crucial role in our work, as evident from the above theorem statement. While we have not come across any explicit study of these polynomials in previous literature, we explain further below how they arise naturally in the Dyer-Lehrer theory for Hecke algebras.

In \Cref{prop:recJR}, we provide a parabolic recursion formula that utilizes $R$-polynomials instead of Kazhdan-Lusztig polynomials, which is a common approach in the Kazhdan-Lusztig theory. Interestingly, this form of recursion is easier to state but fails to capture the positivity properties of Theorem 1.2.

We anticipate a forthcoming work of Brenti-Marietti that will further explore the applications of this form of alternative recursion.

Returning to the case of permutation groups, we offer a new proof for \Cref{thm:wilmson} that relies solely\footnote{The positivity parts of the theorem statement are not deduced algebraically. Instead, we exhibit their deduction from earlier positivity phenomena \cite{DL90,GH07} in Hecke algebras that were originally proved in a geometric manner.} on Hecke-algebraic techniques.

\begin{theorem}\label{thm:B}
Let the group $S_n$ be taken with the standard convention of Coxeter generators being the simple transpositions. Let $J = \{(1,2),\ldots,(n-2,n-1)\} \subset S_n$ be a fixed subset of Coxeter generators.

For $\sigma,\omega\in S_n$, let $\mathcal{P}(\sigma,\omega)$ be the collection of subsets $B\subset \{1,\ldots, n-1\}$, for which the two following conditions are satisfied:
\begin{enumerate}
  \item For all $i\in B$, $\sigma < (i,n)\sigma$ holds in the Bruhat order,

  \item $\omega$ is the supremum of the set $\{\sigma\}\cup\{(i,n)\sigma\}_{i\in B}$ in the Bruhat order.
\end{enumerate}

Then,
\[
R_{\sigma,\omega,J} = \sum_{B\in \mathcal{P}(\sigma,\omega)} (q-1)^{|B|}
\]
holds.

In particular, the decomposition of \Cref{thm:A} coincides with the hypercube decomposition of \Cref{thm:wilmson} in that case.

\end{theorem}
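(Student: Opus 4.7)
The plan is to specialize the general parabolic recursion for $R_{\sigma,\omega,J}$ from \Cref{prop:recJR} to the case $J = \{(1,2),\ldots,(n-2,n-1)\}$, and then to show that the hypercube sum satisfies the same recursion. The key observation is that $W_J = S_{n-1}$ is the stabilizer of $n$ in $S_n$, so that the cosets $W_J\sigma$ are parametrized by $\sigma^{-1}(n)\in\{1,\ldots,n\}$, and the transpositions $(i,n)$ for $i=1,\ldots,n-1$ are exactly the elements of $W$ that transition between these cosets. A standard criterion identifies $(i,n)\sigma>\sigma$ with $\sigma^{-1}(i)<\sigma^{-1}(n)$, matching condition (1) in the definition of $\mathcal{P}(\sigma,\omega)$.

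First, I would rewrite the conclusion of \Cref{prop:recJR} in the specific case $S\setminus J = \{s_{n-1,n}\}$; in this setting the recursion should decompose $R_{\sigma,\omega,J}$ into contributions indexed by ``first moves'' from $\sigma$ towards $\omega$, where each admissible first move corresponds to left-multiplication by some $(i,n)$ with $(i,n)\sigma\leq\omega$. Next, I would verify by induction on $\ell(\omega)-\ell(\sigma)$ that the hypercube sum $H(\sigma,\omega):=\sum_{B\in\mathcal{P}(\sigma,\omega)}(q-1)^{|B|}$ satisfies the same recursion. The inductive step would split $\mathcal{P}(\sigma,\omega)$ according to whether a fixed admissible index $i_0$ belongs to $B$ or not, matching the two resulting subsums with the corresponding pieces of the $R$-polynomial recursion — the factor $(q-1)$ emerging naturally from the binary choice $i_0\in B$ versus $i_0\notin B$.

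The main obstacle is the compatibility between the Bruhat supremum and the hypercube structure. The defining condition for $\mathcal{P}(\sigma,\omega)$ requires $\omega$ to equal $\sup(\{\sigma\}\cup\{(i,n)\sigma\}_{i\in B})$, and the recursive argument requires tracking how this supremum changes as $i_0$ is added to or removed from $B$ — in particular, one must show that the remaining atoms continue to determine $\omega$ (or else an appropriate smaller supremum) as their supremum. In $S_n$, the Bruhat order admits an explicit rank-function description via permutation matrices that ensures such suprema exist and behave predictably under the removal of a single atom. This combinatorial input is essential and is the reason the hypercube formula is symmetric-group specific rather than holding for arbitrary finite Coxeter systems.

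Once the identity $R_{\sigma,\omega,J}=H(\sigma,\omega)$ is established, the closing assertion of the theorem — that the decomposition of \Cref{thm:A} coincides with the BBDVW hypercube decomposition — follows by substituting this identity into \Cref{thm:A}(1), simplifying the resulting expression for $Q^J_{\sigma,\omega}$ under the involution $d$, and comparing term-by-term with the explicit formula for $Q_{\sigma,\omega}$ in \Cref{thm:wilmson}; the correction term $I^J_{\sigma,\omega}$ described in \Cref{thm:A}\eqref{it:intr2} must then match the hypercube remainder, which should be immediate from the uniqueness of the positive decomposition, or directly verified by inspection.
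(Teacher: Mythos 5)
Your proposal has a genuine gap, and it also diverges substantially from the paper's actual argument, so let me address both.

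The core difficulty is your reading of \Cref{prop:recJR}. That proposition is not a recursion on the interval length $\ell(\omega)-\ell(\sigma)$: it expresses $R_{\sigma,\omega,J}$ as a sum over $\kappa\in W_J$ of products $d(R_{\sigma_J,\kappa})\,R_{\kappa\cdot{}^J\sigma,\omega}$ of \emph{ordinary} $R$-polynomials (for $W_J$ and for $W$), with no $J$-relative polynomial on the right. Consequently, the ``induction on $\ell(\omega)-\ell(\sigma)$'' you propose has no base to stand on — you never get a smaller instance of $R_{\cdot,\cdot,J}$ on the right-hand side. The ``first move'' phrasing, with a deletion-contraction split of $\mathcal{P}(\sigma,\omega)$ on whether a fixed $i_0$ lies in $B$, also does not yield a clean recursion: when you replace $\sigma$ by $(i_0,n)\sigma$, the set of admissible indices and the supremum structure both change in a way that does not reproduce the hypercube sum for a shorter interval, and you correctly flag this as ``the main obstacle'' without resolving it. As written, your plan reduces to the hope that two unproven identities are compatible.

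The paper takes a direct, non-recursive route. By definition $\check{R}_{\sigma,\omega,J}=\langle f^{\sigma,\omega_0\omega_0^J}, h_\omega\rangle$, which is evaluated by \Cref{prop:Rformula} against the fixed $(\omega_0^J,\omega_0)$-convex tuple $t_{n-1},t_{n-2},\dots,t_1$ with $t_i=(i,n)$; because this reflection ordering uses each $t_i$ exactly once and the admissible sequences must be strictly increasing chains, \Cref{prop:Runnorm} shows that $\check{R}_{\sigma,\omega,J}$ is a \emph{single monomial} $\alpha^{|A|}$ whenever $\omega=\sigma^A$ for some $A\in\mathcal{S}(\sigma)$, and zero otherwise. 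To obtain the normalized hypercube form, the paper computes the length defect $\ell(\sigma^A)-\ell(\sigma)=2|A^\sigma|-|A|$ (\Cref{lem:length}), sets up a Galois connection between arbitrary admissible subsets $B$ and their convex ``reductions'' $B_\sigma$, and performs the binomial expansion over each fiber $\{B: B_\sigma=A\}$ (\Cref{prop:normRj}). The supremum characterization in the theorem statement is then established independently in \Cref{lem:supr}. In short, the paper proves the formula by explicit evaluation of a Dyer-type reflection-ordering sum, not by matching recursions, and the normalization step is where the genuine combinatorial content (the Galois connection and the length identity) lives — content that is absent from your outline. You would be much better served engaging directly with \Cref{prop:Rformula} and working out the length bookkeeping, rather than trying to fabricate a recursion for $R_{\cdot,\cdot,J}$ that the paper does not provide.

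Your closing appeal to ``uniqueness of the positive decomposition'' for identifying $I^J_{\sigma,\omega}$ with the BBDVW remainder is also unjustified: a sum in $\mathbb{Z}[q]^+$ does not have a unique splitting into two $\mathbb{Z}[q]^+$ summands. The paper's comparison is made by matching the explicit $Q^J_{\sigma,\omega}$ formula of \Cref{thm:hyper} against \cite[Theorem 3.7]{BBDVW21}, with the $I^J$ term then determined as the difference.
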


In the final section of our work, we aim to characterize the role of parabolic recursion in the pursuit of combinatorial invariance.

It has long been believed (see the review \cite{Bre04}, or the discussion in \cite{BBDVW21} for a recount of recent progress) that the value of a Kazhdan-Lusztig polynomial $P_{\sigma,\omega}$ should be extractable out of a ``blind look" at the Bruhat order structure on the interval of elements $\zeta\in W$ in between $\sigma\leq \zeta \leq \omega$.

To provide possible milestones towards this classical Combinatorial Invariance Conjecture, we propose two weaker variants, both of which were implicitly achieved as corollaries of \Cref{thm:wilmson} in the case of symmetric groups upon retrospective analysis.

\Cref{conj:R} proposes that for a finite Coxeter system $(W,S)$, there exists a subset $J\subsetneq S$ such that the values of $R_{\sigma,\omega,J}$ can be obtained from the Bruhat graph on the interval between $\sigma$ and $\omega$, along with information on which edges in the graph come from elements in the subgroup $W_J$.

\Cref{conj:weak} suggests that a canonical coloring with at most $|S|$ colors can be assigned to each Bruhat interval graph so that the values of Kazhdan-Lusztig polynomials can be extracted from the colored directed graph.

The following essentially elaborates on a central theme in \cite{BBDVW21}.

\begin{theorem}[\Cref{thm:sncic}]
Both weak combinatorial invariance conjectures as suggested in \Cref{conj:R} and \Cref{conj:weak} are valid in the case of permutation groups.
\end{theorem}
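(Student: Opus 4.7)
The plan is to read off both weak combinatorial invariance statements from the explicit formulas provided by \Cref{thm:A} and \Cref{thm:B}, applied with the choice $J = \{(1,2),\ldots,(n-2,n-1)\}$ so that $W_J = S_{n-1}$ is the standard corank-one parabolic. For \Cref{conj:R}, we directly inspect the hypercube formula $R_{\sigma,\omega,J} = \sum_{B\in\mathcal{P}(\sigma,\omega)}(q-1)^{|B|}$ of \Cref{thm:B}. Since every reflection in $S_n$ is a transposition, and since the reflections not lying in $W_J$ are precisely the transpositions $(i,n)$ for $1 \leq i \leq n-1$, the elements $(i,n)\sigma$ with $\sigma < (i,n)\sigma$ are exactly the endpoints of the non-$W_J$-edges emanating from $\sigma$ in the Bruhat graph. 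Both the supremum condition defining $\mathcal{P}(\sigma,\omega)$ and the exponent $|B|$ are pure Bruhat-poset features, so $R_{\sigma,\omega,J}$ is reconstructible from the Bruhat graph on $[\sigma,\omega]$ once we know which edges are $W_J$-edges. This is exactly the content of \Cref{conj:R}.

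For \Cref{conj:weak}, we propose the canonical $(n-1)$-coloring that assigns color $j-1$ to each Bruhat edge $\sigma \to (i,j)\sigma$ with $i<j$, and prove by induction on $n$ that $P_{\sigma,\omega}$ is extractable from the colored graph of $[\sigma,\omega]$. The base $n=1$ is trivial. For the inductive step, apply \Cref{thm:A} with $J$ as above, and run a secondary induction on $\ell(\omega)-\ell(\sigma)$. The $R$-polynomial input to $Q^J_{\sigma,\omega}$ is available by the previous paragraph, as the non-$W_J$-edges are precisely the edges of color $n-1$. The values $P_{\kappa,\omega}$ for $\sigma<\kappa\leq\omega$ needed to evaluate $Q^J_{\sigma,\omega}$, together with the values $P_{\zeta\cdot{}^J\sigma,\omega}$ feeding the linear system for $(\gamma'_\kappa)$, live on strictly smaller colored sub-intervals $[\kappa,\omega]\subset[\sigma,\omega]$ and are thus available by the secondary induction hypothesis. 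Finally, the $q$-derived values $P^{\partial}_{\sigma_J,\kappa}$ for $\kappa\in W_J=S_{n-1}$ appearing in $I^J_{\sigma,\omega}$ are Kazhdan-Lusztig data for $S_{n-1}$; the relevant colored Bruhat interval $[\sigma_J,\kappa]\subset S_{n-1}$ is recovered from the ambient data by restricting to the coset $W_J\cdot{}^J\sigma$, which is identified as the connected component of $\sigma$ in the subgraph spanned by edges of colors $\{1,\ldots,n-2\}$. Both inductions then close.

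The principal obstacle is the bookkeeping around the compatibility between the two inductions: one must verify that the colored sub-interval $[\sigma_J,\kappa]$ obtained by restricting the coloring of $[\sigma,\omega] \subset S_n$ via the translation isomorphism $w\mapsto w\cdot{}^J\sigma$ carries precisely the canonical $(n-2)$-coloring it would receive as an interval of $S_{n-1}$ in its own right. This reduces to two routine checks: that a transposition $(i,j)$ with $i<j\leq n-1$ gets the same label $j-1$ from both viewpoints, and that the Bruhat order on the coset $W_J\cdot{}^J\sigma$ is isomorphic to that of $W_J$, which is the standard fact about minimal-length coset representatives. Once these compatibilities are in place, the proof becomes a direct translation of the recursion in \Cref{thm:A} into the language of canonically colored Bruhat interval graphs.
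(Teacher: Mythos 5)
Your proposal is correct in substance. The treatment of \Cref{conj:R} is the same as the paper's: \Cref{prop:normRj} and \Cref{lem:supr} (equivalently, \Cref{thm:B}) show $R_{\sigma,\omega,J}$ is determined by the Bruhat interval graph together with a marking of the transpositions $(i,n)$, i.e.\ the non-$W_J$ edges.

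For \Cref{conj:weak}, however, your route diverges from the paper's in a way worth pointing out. You prove the statement directly for $S_n$ by running the Kazhdan-Lusztig-level recursion of \Cref{thm:A}: compute $Q^J_{\sigma,\omega}$ from $R_{\sigma,\kappa,J}$ and $P_{\kappa,\omega}$, compute $I^J_{\sigma,\omega}$ from $P^\partial_{\sigma_J,\kappa}$ and the Grojnowski--Haiman coefficients $\gamma'_\kappa$ obtained by inverting the linear system, and close a double induction (on $n$ and on $\ell(\omega)-\ell(\sigma)$). The paper instead first establishes the \emph{general} implication ``\Cref{conj:R} $\Rightarrow$ \Cref{conj:weak}'' as \Cref{prop:impl}, valid for any finite Coxeter systems, and then specializes. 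Crucially, \Cref{prop:impl} works entirely at the level of $R$-polynomials via \Cref{prop:recJR}:
\[
R_{\sigma,\omega} = \sum_{\kappa\in W_J\,:\, \sigma \leq \kappa\cdot{}^J\sigma \leq \omega } R_{\sigma_J, \kappa}\, R_{\kappa\cdot {}^J\sigma, \omega, J}\;,
\]
reducing to $R$-polynomials by the standard identity \eqref{eq:1} before inducting on rank. This bypasses the hybrid basis, the $\gamma'_\kappa$ linear system, the $P^\partial$-to-$P$ extraction, and the interval-restriction bookkeeping you flag as the ``principal obstacle''; the only nonstandard input needed is \Cref{lem:coset}, identifying a coset interval with a $W_J$-interval. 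It also yields the logically stronger result that relative combinatorial invariance always implies filtered combinatorial invariance, not only for permutation groups. Your argument is sound but does strictly more work at the $P$-level; when you write out the compatibility checks you postponed, you will essentially be reconstructing \Cref{lem:coset} and a specialization of the proof of \Cref{prop:impl}, so it would be cleaner to prove that general implication once and appeal to it, as the paper does.
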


\subsection{Methods of proof}
Our main line of study revisits the theory of Dyer-Lehrer \cite{DL90} for Hecke algebras.

Recall that Kazhdan-Lusztig theory, in its basic appearance as in \cite{KL79}, deals with the Hecke algebra $\mathcal{H}(W)$, associated with a Coxeter system $(W,S)$, and its two $W$-labelled bases, the standard $\{h_\omega\}_{\omega\in W}$ and the canonical $\{c_\omega\}_{\omega\in W}$.

In our notation, we consider, for each given element $\tau\in W$, the twisted basis $\{h_{\tau}^{-1} h_{\tau\omega}\}_{\omega\in W}$ for $\mathcal{H}(W)$. A central early finding of Dyer-Lehrer asserts that the transition matrix between any of the twisted bases and the canonical basis has non-negative coefficients.

By utilizing identities that follow out of consecutive changes of basis, we can employ this positivity result, along with further combinatorial analysis of $R$-polynomials by Dyer, in order to explicate structured decompositions of $q$-derived Kazhdan-Lusztig polynomials.

In \Cref{prop:posIQ} and \Cref{cor:ppos}, we thus construct families of polynomials $I^\tau_{\sigma,\omega}\in \mathbb{Z}[q]^+$, for $\sigma,\omega,\tau\in W$ (here $W$ is finite), so that the symmetry
\begin{equation}\label{eq:intro}
P^{\partial}_{\sigma,\omega} = I^\tau_{\sigma,\omega}  + q^{\ell(\omega)-\ell(\sigma)-1} d(I^{\omega_0\tau}_{\sigma,\omega})\;,
\end{equation}
holds, where $\omega_0\in W$ is the longest element in the group.

In order to attain the parabolic recursion formulas of \Cref{thm:A}, we specialize the decomposition of \eqref{eq:intro} to the case of $\tau = \omega_0^J$, where $\omega_0^J\in W_J$ is the longest element in the parabolic subgroup.

The $J$-relative $R$-polynomials are then defined as the transition constants
\[
h_{\omega_0\omega_0^J} h_\omega = \sum_{\sigma\in W} q^{\frac{\ell(\sigma)-\ell(\omega)}2} R_{\sigma,\omega,J} h_{\omega_0\omega_0^J\sigma} \;,
\]
in between bases in the $\mathbb{Z}[q^{\pm 1/2}]$-algebra $\mathcal{H}(W)$.

A second key aspect to highlight is the mechanism underlying the positive decomposition outlined in \Cref{thm:A}\eqref{it:intr2}. The polynomials $\gamma'_\kappa$, for $\kappa\in W_J$, present a particular instance of another positivity phenomenon that has been previously studied in Hecke algebras.

Grojnowski-Haiman \cite{GH07} introduced a $J$-hybrid basis for $\mathcal{H}(W)$, consisting of products of the form $c_{\omega_J}h_{{}^J \omega}$, for each $\omega = \omega_J {}^J \omega \in W$, when written as in \Cref{thm:A}. Their main finding was that expanding a canonical basis element $c_\omega$ on the $J$-hybrid basis yet again results in non-negative coefficients. In \Cref{prop:inductive}, we demonstrate that these coefficients are precisely the constants required for our decomposition.

\subsection{Further discussion}

In addition to the conjectures we propose in \Cref{sect:cic}, our results suggest several natural issues for future study that we consider important to note.

Firstly, it would be interesting to explore whether the decomposition in \Cref{thm:A} has a direct geometric interpretation, similar to the one originally used to prove \Cref{thm:wilmson}. In \cite{BBDVW21}, $q$-derived Kazhdan-Lusztig polynomials are presented as Poincar\'{e} polynomials of certain projective varieties related to Schubert geometry, and the hypercube decomposition becomes a decategorification of an identity in the derived category of sheaves on that variety. Can further cases of parabolic recursion be obtained in this manner?

Next, beyond the context of combinatorial invariance as in \Cref{conj:R}, it is worth contemplating whether $J$-relative $R$-polynomials may admit formulas as elegant as in \Cref{thm:B} in cases other than those of \Cref{sect:Sn}. In other words, how might a hypercube look like outside of Lie type $A$?

On a related note, machine verification that originated in \cite{BBDVW-nat} demonstrated that the decomposition of \Cref{thm:wilmson} arises in a more general context than that of a coset of a parabolic subgroup in the Coxeter group. It would be interesting to search for similar graph-theoretic axiomatizations that lead to decompositions analogous to \Cref{thm:A} for other finite Coxeter systems.

Finally, we briefly mention a theme that served as the initial motivation for this work. One Lie-theoretic appearance of $S_n$-Kazhdan-Lusztig polynomials is the graded decomposition numbers in the representation theory of type $A$ quiver Hecke algebras (see, for example, \cite{BK09}). The ungraded variant descends into the $p$-adic Kazhdan-Lusztig conjectures of \cite{Z81}, while its decategorified variant amounts to describing the transition between PBW and canonical bases in the quantum group $U_q(\mathfrak{sl}_N)^+$  \cite{L90}.

The relevance of this setting to the hypercube decomposition is due to the fact that proper standard modules for quiver Hecke algebra, whose decomposition into simple constituents is given by $S_n$-Kazhdan-Lusztig polynomials, are constructed as convolution products of $n$ cuspidal\footnote{Not to be confused with the $p$-adic notion of cuspidality.} modules in the sense of \cite{KR11}. Thus, the functor of convoluting with an additional cuspidal module serves as an induction step between the spectral theory of $S_n$ to that of its parabolic subgroup $S_{n-1}$.

In that sense, a categorification of \Cref{thm:wilmson} is possible and will be carried out in a forthcoming work. However, we raise the question of whether other Lie-theoretic settings exist that could provide the categorical foundation for the parabolic recursion of \Cref{thm:A}.

\subsection{Acknowledgements}

The first author would like to thank the organizers of the Oberwolfach August 2022 workshop ``Character Theory and Categorification" for providing a platform to present research related to this work, which resulted in valuable feedback.

We thank Geordie Williamson for sharing his insights on the topic, that gave much of the initial impetus to this project.

The first author is thankful to Martina Lanini for valuable discussions and for hosting his visit at the Tor Vergata University in Rome. We express our gratitude to Francesco Brenti and Mario Marietti for sharing their views on the emerging hypercube theme, and to Gaston Burrull for exposing us to essential background materials.

We extend special appreciation to Nicol\'{a}s Libedinsky for directing us to the crucial observation on the relevance of the work of Grojnowski-Haiman to the subject.

This research is supported by the Israel Science Foundation (Grant Number: 737/20).
	
	\section{Preliminaries}

	

    \subsection{Hecke algebras and Kazhdan-Lusztig polynomials}
	Let $\mathcal{L}:=\mathbb{Z}[v,v^{-1}]$ be the ring of Laurent polynomials in a formal variable $v$, and $\mathcal{L}^+\subset \mathcal{L}$ its sub-semiring of Laurent polynomials with positive coefficients.

As customary, we fix the notation $\alpha = v^{-1}-v, q=v^{-2}\in \mathcal{L}$ throughout this work. We write $d:\mathcal{L}\to\mathcal{L}$ for the ring involution given by $d(v) = v^{-1}$. Clearly, $d(\alpha) = -\alpha$.

Let $(W,S)$ be a finite Coxeter system (that is, the group $W$ is assumed to be finite).

We write $\ell: W \to \mathbb{Z}_{\geq0}$ for its length function, $\omega_0 \in W$ for its longest element, and $e\in W$ for the identity element. We also equip $W$ with Bruhat partial order relative to $S$, denoted as $\leq$.

Let $\mathcal{H}(W)$ denote the Hecke algebra associated with $(W,S)$. This is a $\mathcal{L}$-algebra spanned by a standard basis $\{h_\omega\}_{\omega\in W}$\footnote{A common distinct normalization appearing in literature would be $T_\omega: = v^{-\ell(\omega)}h_\omega$.}
, subject to the multiplicative relations
\[
h_{\omega_1}h_{\omega_2} = h_{\omega_1\omega_2},\quad\mbox{for all }\omega_1,\omega_2\in W,\mbox{ such that }\ell(\omega_1)+\ell(\omega_2) = \ell(\omega_1\omega_2)\;,
\]
\[
(h_s+v)(h_s-v^{-1})=0,\quad\mbox{for all }s\in S\;.
\]

One encompassing reference for the basics of this theory is \cite{S97}.

	
It will be convenient to formulate the basic Kazhdan-Lusztig theory through basis dualities.

Let us denote the dual $\mathcal{L}$-module
\[
\mathcal{H}(W)^* = \mathrm{Hom}_{\mathcal{L}}(\mathcal{H}(W),\mathcal{L})
\]
with the natural pairing $\langle\,,\,\rangle :\mathcal{H}(W)^*\times \mathcal{H}(W) \to \mathcal{L}$.

The dual standard basis $\{h^\omega\}_{\omega\in W}$ for $\mathcal{H}(W)^\ast$ would then be given by
\[
\langle h^{\omega'}, h_\omega\rangle =\delta_{w,w'},\forall \omega,\omega'\in W\;.
\]
	
We let $d$ also stand for the ring involution of $\mathcal{H}(W)$ given by
	\[
d(v):= v^{-1}, d(h_\omega):=(h_{\omega^{-1}})^{-1}\; \forall \omega\in W\;,
\]
which extends the previously defined involution on $\mathcal{L}$.

	By \cite{KL79}, there exists a unique basis $\{c_\omega\}_{\omega\in W}$ characterized by the properties
\[
d(c_\omega)=c_\omega,\quad \langle h^{\omega}, c_\omega\rangle = 1,\quad \langle h^\sigma, c_\omega\rangle \in v\mathbb{Z}[v] \subset \mathcal{L}\;, \forall\, \sigma,\omega\in  W\;.
\]
We define the (unnormalized) Kazhdan-Lusztig polynomials (associated with $(W,S)$) as
\[
\check{P}_{\sigma,\omega} = \langle h^\sigma, c_\omega\rangle \in \mathcal{L} \;,
\]
for all $\sigma,\omega\in W$.

The common normalization, originating in celebrated geometric interpretations, defines the Kazhdan-Lusztig polynomials as elements $P_{\sigma,\omega}\in \mathbb{Z}[q]\subset \mathcal{L}$ that satisfy
\[
P_{\sigma,\omega} = v^{\ell(\sigma)-\ell(\omega)} \check{P}_{\sigma,\omega}\;.
\]

It will be useful to switch in between the normalized and unnormalized notation according to computational needs.

We recall that $\check{P}_{\sigma,\omega} = P_{\sigma,\omega} = 0$, unless $\sigma\leq \omega$.

	%

\subsection{$R$-polynomials and $q$-derived Kazhdan-Lusztig polynomials}

The information encoded in Kazhdan-Lusztig polynomials can often be accessed through a similar notion of $R$-polynomials, whose relevance we now recall.

	Let $d^\ast$ denote the $\mathcal{L}$-linear involution on $\mathcal{H}(W)^\ast$ given by
\[
d^\ast(\phi):=d\circ\phi\circ d,\;\forall \phi\in \mathcal{H}(W)^*\;.
\]

We define the \textit{$R$-polynomials}
\[
\check{R}_{\sigma,\omega} = \langle d^\ast(h^\sigma), h_\omega\rangle\in \mathcal{L}\;,
\]
for all $\sigma,\omega\in W$. Again, it is a common convention to normalize the notion as
\[
R_{\sigma,\omega} = v^{\ell(\sigma)-\ell(\omega)}\langle d^\ast(h^\sigma), h_\omega\rangle\in \mathcal{L}\;.
\]
Same as with Kazhdan-Lusztig polynomials, it is easy to establish that $R_{\sigma,\omega} = \check{R}_{\sigma,\omega} = 0$, unless $\sigma\leq \omega$.

It is evident that the basis $\{d^\ast(h^\omega)\}_{\omega\in W}$ for $\mathcal{H}(W)^\ast$ is dual to the basis $\{d(h_\omega)\}_{\omega\in W}$. Thus, bearing in mind that the basis $\{c_\omega\}_{\omega\in W}$ is $d$-invariant, the identity
\begin{equation}\label{eq:0}
 d(\check{P}_{\sigma,\omega})=	\langle d^\ast(h^\sigma), c_\omega \rangle = \sum_{\kappa\in W} \langle d^\ast(h^\sigma), h_\kappa\rangle \langle h^\kappa, c_\omega \rangle = \sum_{\sigma \leq \kappa\leq \omega} \check{R}_{\sigma,\kappa} \check{P}_{\kappa, \omega} = v^{\ell(\omega)-\ell(\sigma)} \sum_{\sigma \leq \kappa\leq \omega} R_{\sigma,\kappa} P_{\kappa, \omega}
\end{equation}
is obtained.

	
	

It brings forth the notion of \textit{$q$-derived Kazhdan-Lusztig polynomials}, whose unnormalized variant we define as
\[
\check{P}^{\partial}_{\sigma,\omega} = \alpha^{-1} \langle d^\ast(h^\sigma) - h^{\sigma} , c_\omega \rangle = \frac{d(\check{P}_{\sigma,\omega}) - \check{P}_{\sigma,\omega}}{v^{-1}-v}\in \mathcal{L}\;,
\]
for $\sigma,\omega\in W$.

Note, that $\check{P}^\partial_{\sigma,\omega}$ is a well-defined $d$-invariant Laurent polynomial. For computational purposes, the fact that $\check{P}_{\sigma,\omega}$ belongs to the ring $v\mathbb{Z}[v]$, makes $d(\check{P}_{\sigma,\omega})$ belong to the ring $v^{-1}\mathbb{Z}[v^{-1}]$ and the value of $P_{\sigma,\omega}$ be directly extractable out of the value of $\check{P}^\partial_{\sigma,\omega}$.

The normalized variants of $q$-derived Kazhdan-Lusztig polynomials are expressed as the Laurent polynomials
\[
P^\partial_{\sigma,\omega} = v^{\ell(\sigma)-\ell(\omega)+1} \check{P}^\partial_{\sigma,\omega} = \frac{q^{\ell(\omega)-\ell(\sigma)} d(P_{\sigma,\omega})-P_{\sigma,\omega}}{q-1} \in \mathbb{Z}[q]\;.
\]
Since $R_{\sigma,\sigma} = 1$ for all $\sigma\in W$, the identity \eqref{eq:0} may now be read as
	\begin{equation}\label{eq:1}
		P^\partial_{\sigma,\omega} = \frac{1}{q-1} \sum_{\sigma < \kappa\leq \omega} R_{\sigma,\kappa} P_{\kappa, \omega}\;.
	\end{equation}

Indeed, the above form is often used as the standard method for recursive computations of Kazhdan-Lusztig polynomials.

%

	

\section{Positive decompositions}

\subsection{Dyer-Lehrer bases}
	Following Dyer-Lehrer \cite{DL90}, we define the elements
\[
f_{\omega,\tau} := h^{-1}_\tau h_{\tau\omega}\in \mathcal{H}(W)\;,
\]
for $\omega,\tau\in W$. For a fixed $\tau\in W$, $\{f_{\omega,\tau}\}_{\omega\in W}$ is clearly a basis for $\mathcal{H}(W)$. We write $\{f^{\omega,\tau}\}_{\omega\in W}$ for its dual basis in $\mathcal{H}(W)^\ast$.
	
Since for any $\omega\in W$, we have $\ell(\omega_0\omega)+\ell(\omega^{-1}) = \ell(\omega_0)$ and consequently $h_{\omega_0\omega} h_{\omega^{-1}} = h_{\omega_0}$, we deduce that
	\begin{equation}\label{eq:invd}
	f_{\omega,\omega_0} =h_{\omega_0}^{-1}h_{\omega_0\omega}= h_{\omega^{-1}}^{-1} = d(h_\omega)\;.
	\end{equation}
	
	Furthermore, for all $\omega,\tau\in W$, we have
	\[
	d(f_{\omega,\tau}) = d(h_{\tau})^{-1}d(h_{\tau\omega}) = f_{\tau,\omega_0}^{-1} f_{\tau\omega, \omega_0} =(h_{\omega_0}^{-1}h_{\omega_0\tau})^{-1}h_{\omega_0}^{-1}h_{\omega_0\tau\omega}= f_{\omega, \omega_0 \tau}\;.
	\]
	In particular,
\begin{equation}\label{eq:omega0}
d^\ast(f^{\omega,\tau}) = f^{\omega,\omega_0\tau}
\end{equation}
holds in $\mathcal{H}(W)^\ast$.

Much of the subsequent analysis of this subsection is implicitly contained in the work of Dyer in \cite{D93}. For clarity of discussion, we prefer to leave the arguments self-contained.

	\begin{lemma}\label{recursion}
		Let $\tau\in W$ be given and $s\in S$ be a simple reflection such that $\tau < s\tau$ holds. Let us write $t =\tau^{-1} s \tau\in W$. Then, for all $\omega\in W$, we have equalities
		\[
		f_{\omega, s\tau} = \left\{\begin{array}{ll} f_{\omega, \tau} & t\omega > \omega \\ f_{\omega,\tau} - \alpha f_{t\omega,\tau} & t\omega < \omega    \end{array}\right.\;,
		\]
in $\mathcal{H}(W)$, or dually in $\mathcal{H}(W)^\ast$,
		\[
		f^{\omega, s\tau} = \left\{\begin{array}{ll} f^{\omega, \tau} & t\omega > \omega \\ f^{\omega,\tau} + \alpha f^{t\omega,\tau} & t\omega < \omega    \end{array}\right.\;.
		\]
	\end{lemma}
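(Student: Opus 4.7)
The plan is to establish the primal identity by direct computation in $\mathcal{H}(W)$ using the quadratic relation for $h_s$, splitting on whether left multiplication by $s$ lengthens or shortens $\tau\omega$, and then to derive the dual identity by inverting the resulting local $2\times 2$ change-of-basis matrix for each pair $\{\omega, t\omega\}$.

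The pivotal observation is the equivalence $s(\tau\omega) > \tau\omega$ if and only if $t\omega > \omega$, which allows the case dichotomy to be phrased in terms of $t$ and $\omega$ alone. To see this, let $\delta$ denote the simple root attached to $s$. Since $\tau < s\tau$, the root $\beta := \tau^{-1}\delta$ is positive, and $t = \tau^{-1}s\tau$ is precisely the reflection in $\beta$. The standard inversion-set criterion for ascents gives $s(\tau\omega) > \tau\omega \iff (\tau\omega)^{-1}\delta > 0 \iff \omega^{-1}\beta > 0$, while Dyer's reflection-characterization of Bruhat order yields $t\omega > \omega \iff \omega^{-1}\beta > 0$.

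With this equivalence in hand, the primal formula is immediate. Writing $f_{\omega, s\tau} = h_{s\tau}^{-1}h_{s\tau\omega} = h_\tau^{-1}h_s^{-1}h_{s\tau\omega}$ (using $h_{s\tau} = h_s h_\tau$), the first case $t\omega > \omega$ yields $h_{s\tau\omega} = h_s h_{\tau\omega}$, the factors $h_s^{\pm 1}$ cancel, and $f_{\omega, s\tau} = f_{\omega, \tau}$. In the second case $t\omega < \omega$, the quadratic relation gives $h_{s\tau\omega} = h_s h_{\tau\omega} - \alpha h_{\tau\omega}$; substituting, and noting that $h_s^{-1}h_{\tau\omega} = h_{s\tau\omega} = h_{\tau t\omega}$ in this subcase, one arrives at $f_{\omega, s\tau} = f_{\omega, \tau} - \alpha f_{t\omega, \tau}$.

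The dual statement then follows by pairing each $\omega$ with $t\omega$: since $t$ acts on $W$ as an involution without fixed points, the primal identities assemble into a unipotent triangular $2 \times 2$ transition matrix within each such pair. The transpose of its inverse produces the asserted identity for the dual basis. The principal obstacle is the reflection-order equivalence of the second paragraph; once this is secured, every remaining step is a routine consequence of the quadratic relation in $\mathcal{H}(W)$.
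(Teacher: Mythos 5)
Your proof is correct and follows essentially the same route as the paper's: establish that $s(\tau\omega)>\tau\omega$ is equivalent to $t\omega>\omega$, compute the primal identity case-by-case using the quadratic relation, and deduce the dual identity by inverting (and transposing) the resulting $2\times 2$ block transition matrix on each orbit $\{\omega,t\omega\}$. The only cosmetic difference is that you justify the key equivalence directly via positive roots and inversion sets, whereas the paper simply invokes the lifting property from Bj\"orner--Brenti; and you spell out the dual derivation, which the paper leaves implicit.
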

	
	\begin{proof}
		Suppose that $t\omega > \omega$ holds. Since $\tau t > \tau$, we have $s\tau \omega = \tau t\omega > \tau\omega$ by a well-known property of Coxeter groups (\cite[Lemma 2.2.10]{BB05}). Hence,
		\[
		f_{\omega, s\tau} = (h_s h_\tau)^{-1} (h_s h_{\tau \omega}) = f_{\omega,\tau}\;.
		\]
		Similarly, when $t^2\omega > t\omega$, we have $\tau\omega = \tau t^2 \omega > \tau t \omega = s\tau \omega$. Thus, in the latter case, we obtain
		\[
		h_{\tau t \omega} = h^{-1}_s h_{\tau \omega} = (h_s-\alpha) h_{\tau\omega} = h_s (h_{\tau\omega} - \alpha h_{\tau t\omega})  \;,
		\]
		which implies $f_{\omega, s\tau} = (h_s h_\tau)^{-1} h_{\tau t \omega } = h_{\tau}^{-1} ( h_{\tau\omega} - \alpha h_{\tau t\omega})$.

The last identity follows, since $f_{t\omega,s\tau} = f_{\omega,\tau}$ as in the former case.
		
	\end{proof}

	Let $T\subset W$ denote the set of reflections in the Coxeter group, that is, $T = \bigcup_{\omega\in W} \omega S\omega^{-1}$.

For $\tau,\kappa\in W$, we say that a tuple $t_1,\ldots,t_k\in T$ is $(\tau,\kappa)$-\textit{convex}, if $\kappa = \tau t_1\ldots t_k$, and $\ell(\tau t_1\ldots t_i) = \ell(\tau) + i$, for all $1\leq i\leq k$.

It can be shown that if $t_1,\ldots, t_k\in T$ give a $(\tau,\kappa)$-convex sequence, then all $t_i$ are distinct.

A $(e, \omega_0)$-convex tuple amounts to a linear ordering of $T$. Those orderings are known as the reflection orderings, or the convex orderings of the positive roots in the root system associated with $W$, in cases where $W$ is a Weyl group.

A link of the above notion to the weak (left) Bruhat order $\leq_L$ on $W$ is naturally in place. We recall that $\tau \leq_L \kappa$ holds, whenever it is possible to write $\kappa = s_1\cdot\ldots\cdot s_k \tau$ with $s_1,\ldots, s_k\in S$ and $\ell(\kappa) = \ell(\tau)+k$.

It now easily follows that $\tau <_L \kappa$ holds, if and only if, a $(\tau,\kappa)$-convex tuple exists.
	
	\begin{proposition}\label{prop:Rformula}
		Let $t_1,\ldots,t_k\in T$ be a fixed $(\tau,\kappa)$-convex tuple.
		
		Then, for all distinct $\sigma,\omega\in W$, the formula
		\[
		\langle f^{\sigma,\tau} , f_{\omega,\kappa}\rangle = \sum_{\rho\in A(\sigma,\omega)} (-\alpha)^{n(\rho)}
		\]
		holds, where $A(\sigma,\omega)$ is the set of all sequences $\rho = (1\leq i_1 < \ldots < i_{n(\rho)}\leq k)$, for which
		\[
		\omega = t_{i_1}\cdot\ldots\cdot t_{i_{n(\rho)}} \sigma >  t_{i_2}\cdot\ldots\cdot t_{i_{n(\rho)}} \sigma > \ldots >  t_{i_{n(\rho)}} \sigma> \sigma
		\]
		is satisfied.

In addition, we have $\langle f^{\omega,\tau} , f_{\omega,\kappa} \rangle = 1$, for all $\omega\in W$.

	\end{proposition}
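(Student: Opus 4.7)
The plan is to establish the formula by induction on $k = \ell(\kappa) - \ell(\tau)$. The base case $k = 0$ forces $\tau = \kappa$, so $f_{\omega,\kappa} = f_{\omega,\tau}$ and $\langle f^{\sigma,\tau}, f_{\omega,\kappa}\rangle = \delta_{\sigma,\omega}$. The set $A(\sigma,\omega)$ is empty for $\sigma \neq \omega$, and for $\sigma = \omega$ only the empty sequence contributes, giving $1$; this simultaneously handles the main formula and the diagonal assertion $\langle f^{\omega,\tau}, f_{\omega,\kappa}\rangle = 1$.

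For the inductive step, I would view the $(\tau,\kappa)$-convex tuple as a chain of Bruhat covers $\tau = \tau_0 \lessdot \tau_1 \lessdot \cdots \lessdot \tau_k = \kappa$ with $\tau_i = \tau_{i-1} t_i$, and expand $f_{\omega,\kappa}$ in the basis $\{f_{\nu,\tau}\}_{\nu\in W}$ by telescoping an appropriate \Cref{recursion}-type recursion from $\tau_k$ down to $\tau_0$. At each step $i$, the element $f_{\omega',\tau_i}$ should be replaced by either $f_{\omega',\tau_{i-1}}$ (when $t_i\omega' > \omega'$) or by $f_{\omega',\tau_{i-1}} - \alpha f_{t_i\omega',\tau_{i-1}}$ (when $t_i\omega' < \omega'$). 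Iterating through $i = k, k-1, \ldots, 1$ and collecting the $2^k$ branches yields an expansion
\[
f_{\omega,\kappa} \;=\; \sum_{J\subseteq\{1,\ldots,k\}} (-\alpha)^{|J|}\,\chi_J(\omega)\, f_{\omega_J,\,\tau},
\]
where $\chi_J(\omega)$ records the product of indicator conditions gathered along the branch, and for $J = \{j_1 > j_2 > \cdots > j_n\}$ listed in order of processing one has $\omega_J = t_{j_n}\cdots t_{j_1}\omega$.

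Reading off the coefficient of $f_{\sigma,\tau}$ and re-indexing $J$ in increasing order as $i_1 < i_2 < \cdots < i_n$, the equation $\omega_J = \sigma$ rewrites as $\omega = t_{i_1} t_{i_2} \cdots t_{i_n}\sigma$, and the accumulated indicator conditions unfold precisely into the strictly increasing chain $\sigma < t_{i_n}\sigma < t_{i_{n-1}}t_{i_n}\sigma < \cdots < t_{i_1}\cdots t_{i_n}\sigma = \omega$ from the definition of $A(\sigma,\omega)$. The diagonal case for $k>0$ is then automatic, since any nonempty $J$ whose indicators are all active strictly decreases length at each marked step, forcing $\omega_J \neq \omega$ and leaving only $J = \emptyset$ to contribute to the coefficient of $f_{\omega,\tau}$. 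I anticipate the principal obstacle to lie in justifying the peeling step: \Cref{recursion} as stated requires the passage $\tau_{i-1}\to\tau_i$ to be left multiplication by a \emph{simple} reflection, while a general convex-tuple cover $\tau_{i-1}\lessdot\tau_{i-1}t_i$ is a right multiplication by a reflection that need not be simple. Circumventing this requires deriving a right-multiplication analog of \Cref{recursion} for arbitrary reflections $t_i$, directly from the quadratic relation in $\mathcal{H}(W)$ together with the length condition $\ell(\tau_{i-1}t_i) = \ell(\tau_{i-1}) + 1$ --- an argument parallel to, but more delicate than, the original \Cref{recursion} proof and in the spirit of Dyer's treatment in \cite{D93}.
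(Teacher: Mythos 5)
Your proposal mirrors the paper's argument: both expand $f_{\omega,\kappa}$ over the basis $\{f_{\nu,\tau}\}_{\nu\in W}$ by iterating \Cref{recursion} along the chain $\tau=\tau_0$, $\tau_i = \tau_{i-1}t_i$, $\tau_k=\kappa$, and both read off the $(-\alpha)$-weighted branches as Bruhat chains. The paper settles this in one sentence with ``a recursive application of Lemma \ref{recursion},'' while you correctly flag the delicate point: \Cref{recursion} applies to the step $\tau_{i-1}\to\tau_i$ only when it is a weak left cover, i.e. when $\tau_{i-1}t_i\tau_{i-1}^{-1}$ is simple. That concern is real and is not resolved by the paper: a $(\tau,\kappa)$-convex tuple yields Bruhat covers, not weak left covers, and the paper's preliminary assertion that a $(\tau,\kappa)$-convex tuple exists iff $\tau<_L\kappa$ fails in the direction one would need. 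In $S_4$, take $\tau=s_1s_3$ and $\kappa=s_1s_2s_3$; the tuple $t_1=(2,4)$ is $(\tau,\kappa)$-convex, yet $\kappa\tau^{-1}=(1,3)$ has length $3\neq 1$, so $\tau\not<_L\kappa$.

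Where your proposal goes astray is the proposed remedy. You suggest deriving a version of \Cref{recursion} with $t_i$ an arbitrary reflection under only the length hypothesis $\ell(\tau_{i-1}t_i)=\ell(\tau_{i-1})+1$, but that generalization is false, and so is the stated formula of \Cref{prop:Rformula} for the convex tuple above: taking $\omega=(4,1,3,2)$, a direct Hecke-algebra computation gives
\[
f_{\omega,\kappa}=f_{\omega,\tau}-\alpha f_{t_1,\tau}-\alpha f_{t_1\omega,\tau},
\]
and the extra term $-\alpha f_{t_1,\tau}$ is produced neither by the would-be generalized lemma nor by the sum over $A(t_1,\omega)$ (which is empty here). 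The identity does hold when the convex tuple is derived from an actual weak left chain $\tau\lessdot_L\tau_1\lessdot_L\cdots\lessdot_L\kappa$, which is exactly what the paper's applications use: for the $(\omega_0^J,\omega_0)$-tuple built from the transpositions $(i,n)$ in \Cref{sect:Sn}, each intermediate step is a weak left cover, so \Cref{recursion} applies literally. The honest fix is therefore to \emph{restrict} the hypothesis of \Cref{prop:Rformula} to convex tuples coming from weak left chains rather than to \emph{strengthen} \Cref{recursion} to arbitrary reflections; the paper's blanket equivalence between convex tuples and weak left order is what obscures the issue.
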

	\begin{proof}
		A recursive application of Lemma \ref{recursion} shows that
\[
f_{\omega,\kappa} = f_{\omega,\tau t_1\cdots t_k} = f_{\omega, \tau} + \sum_{\rho = (i_1,\ldots,i_{n(\rho)})\in A(\omega)} (-\alpha)^{n(\rho)} f_{t_{i_{n(\rho)}}\cdot\ldots\cdot t_{i_1}\omega,\tau}\;,
\]
where $A(\omega)$ stands for all sequences $\rho = (1\leq i_1 < \ldots < i_{n(\rho)}\leq k)$ with
	\[
		\omega > t_{i_1}\omega > t_{i_2}t_{i_1}\omega > \ldots >  t_{i_{n(\rho)}}\cdot\ldots \cdot t_{i_1} \omega\;.
		\]
In other words, $A(\omega) = \sqcup_{\sigma\in W} A(\sigma,\omega)$.

	\end{proof}

\subsection{Dyer filtrations}

For given elements $\tau, \sigma\in W$, according to \eqref{eq:omega0} we may write
\[
d^\ast(h^\sigma) - h^\sigma = (f^{\sigma,\tau} - h^{\sigma}) - d^\ast(f^{\sigma,\omega_0\tau} - h^{\sigma})\in \mathcal{H}(W)^\ast\;.
\]
It follows that for all $\tau,\sigma, \omega\in W$, we can decompose the $q$-derived Kazhdan-Lusztig polynomials in the form
\begin{equation}\label{eq:IQ}
  \check{P}^\partial_{\sigma,\omega} = \check{I}^\tau_{\sigma,\omega} + \check{Q}^\tau_{\sigma,\omega} = \check{I}^\tau_{\sigma,\omega}+ d(\check{I}^{\omega_0\tau}_{\sigma,\omega}) \;,
\end{equation}
where
\[
\check{I}^\tau_{\sigma,\omega}: = \alpha^{-1} \langle f^{\sigma,\tau} - h^{\sigma}, c_\omega\rangle \in \mathcal{L}\;,
\]
\[
\check{Q}^\tau_{\sigma,\omega}: = -\alpha^{-1} \langle d^\ast( f^{\sigma,\omega_0\tau} - h^{\sigma}), c_\omega \rangle  = d(\alpha^{-1} \langle f^{\sigma,\omega_0\tau} - h^{\sigma}, c_\omega \rangle )\in \mathcal{L}\;.
\]
The last equality exploits the fact that $c_\omega$ is $d$-invariant.


It follows from \eqref{eq:invd}	that $\check{I}^{\omega_0}_{\sigma,\omega} = \check{Q}^{e}_{\sigma,\omega} = \check{P}^{\omega_0}_{\sigma,\omega}$ and $\check{I}^{e}_{\sigma,\omega} = \check{Q}^{\omega_0}_{\sigma,\omega} = 0$.
Hence, the following proposition gives a sense in which the family of polynomials $\{\check{I}^\tau_{\sigma,\omega}\}_{\tau\in W}$ serves as a filtration of the $q$-derived Kazhdan-Lusztig polynomials.

	\begin{proposition}\label{prop:posIQ}
		For all $\sigma,\omega,\tau_1,\tau_2\in W$ with $\tau_1\leq_L\tau_2$, we have the monotonicity property
		\[
		\check{I}^{\tau_2}_{\sigma,\omega} - \check{I}^{\tau_1}_{\sigma,\omega}\in \mathcal{L}^+\;.
		\]
	\end{proposition}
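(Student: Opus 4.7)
The plan is to reduce the monotonicity statement to a single step in the weak Bruhat order and then invoke the Dyer--Lehrer positivity. Since $\tau_1 \leq_L \tau_2$ means we can write $\tau_2 = s_k\cdots s_1 \tau_1$ with $s_i\in S$ and each intermediate product adding one to the length, induction on $\ell(\tau_2)-\ell(\tau_1)$ reduces the claim to the case $\tau_2 = s\tau_1$ with $s\in S$ and $s\tau_1 > \tau_1$. So I would treat this single-step case from the outset.

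Fix such $s$ and set $t := \tau_1^{-1} s \tau_1 \in T$. By definition,
\[
\check{I}^{s\tau_1}_{\sigma,\omega} - \check{I}^{\tau_1}_{\sigma,\omega} = \alpha^{-1}\langle f^{\sigma,s\tau_1} - f^{\sigma,\tau_1}, c_\omega\rangle \;,
\]
so I would compute the difference $f^{\sigma,s\tau_1}-f^{\sigma,\tau_1}$ directly from the dual recursion in \Cref{recursion}. In the case $t\sigma > \sigma$ the two vectors are equal and the difference is $0\in \mathcal{L}^+$. In the case $t\sigma < \sigma$ the recursion gives
\[
f^{\sigma,s\tau_1} - f^{\sigma,\tau_1} = \alpha\, f^{t\sigma,\tau_1}\;,
\]
and consequently $\check{I}^{s\tau_1}_{\sigma,\omega}-\check{I}^{\tau_1}_{\sigma,\omega} = \langle f^{t\sigma,\tau_1}, c_\omega\rangle$.

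It thus remains to show that the pairing $\langle f^{\omega',\tau},c_\omega\rangle$ lies in $\mathcal{L}^+$ for all $\omega',\omega,\tau\in W$, which is equivalent to saying that the coefficients in the expansion of $c_\omega$ on the twisted basis $\{f_{\omega',\tau}\}_{\omega'\in W}$ are Laurent polynomials with non-negative coefficients. This is precisely the Dyer--Lehrer positivity phenomenon recalled in the methods discussion, applied to the twisted basis $\{h_\tau^{-1} h_{\tau\omega'}\}_{\omega'\in W}$. Combining the two cases yields $\check{I}^{s\tau_1}_{\sigma,\omega}-\check{I}^{\tau_1}_{\sigma,\omega}\in \mathcal{L}^+$, and iteration along a reduced factorization of $\tau_2\tau_1^{-1}$ on the left completes the induction.

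The only real obstacle is invoking the Dyer--Lehrer positivity at the correct point; everything else is bookkeeping with \Cref{recursion}. I would expect the exposition to clarify that the positivity is being used for the specific twisted basis indexed by $\tau_1$ (not $\tau_2$), since that is the basis in which the single-step difference $\alpha^{-1}(f^{\sigma,s\tau_1}-f^{\sigma,\tau_1})$ is manifestly a single basis vector.
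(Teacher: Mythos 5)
Your proof is correct and takes essentially the same route as the paper: reduce to a single left-weak-order step $\tau_2 = s\tau_1$, apply the dual recursion in \Cref{recursion} to identify the difference as $0$ or $\langle f^{t\sigma,\tau_1}, c_\omega\rangle$, and then invoke Dyer--Lehrer positivity for the twisted basis at $\tau_1$. Your closing remark pinning down that the positivity is being used for the $\tau_1$-twisted basis is a useful clarification that the paper leaves implicit.
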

	
	\begin{proof}
		It is enough to assume that $\tau_1 < \tau_2= s\tau_1$, for $s\in S$. Now, by \Cref{recursion}, $\check{I}^{\tau_2}_{\sigma,\omega} - \check{I}^{\tau_1}_{\sigma,\omega}$ equals either $0$ or $f^{\sigma',\tau_1}(c_{\omega})$ for $\sigma'=\tau_1^{-1}s\tau_1\sigma\in W$. Positivity now follows from \cite{DL90}.

	\end{proof}
	
	\begin{corollary}\label{cor:ppos}
		
		For all $\sigma,\omega,\tau\in W$, $\check{I}^{\tau}_{\sigma,\omega}$ and $\check{Q}^{\tau}_{\sigma,\omega}$ are in $\mathcal{L}^+$.
	\end{corollary}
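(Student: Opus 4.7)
The plan is to extract \Cref{cor:ppos} as an immediate consequence of the monotonicity established in \Cref{prop:posIQ}, combined with the boundary computation $\check{I}^{e}_{\sigma,\omega} = 0$ that was recorded in the paragraph preceding that proposition. There is no new idea required here; the content of positivity lives entirely in \Cref{prop:posIQ} (and hence, ultimately, in the Dyer-Lehrer theorem cited from \cite{DL90}).

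First, I would invoke the standard fact that the identity element $e$ is the unique minimum of the weak left Bruhat order $\leq_L$ on $W$, so that $e \leq_L \tau$ holds for every $\tau \in W$. Applying \Cref{prop:posIQ} with $\tau_1 = e$ and $\tau_2 = \tau$ and substituting $\check{I}^{e}_{\sigma,\omega} = 0$ then produces
\[
\check{I}^{\tau}_{\sigma,\omega} \;=\; \check{I}^{\tau}_{\sigma,\omega} - \check{I}^{e}_{\sigma,\omega} \;\in\; \mathcal{L}^+\;,
\]
which handles the first half of the statement.

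Second, for the $\check{Q}$-part, I would repeat the same argument but now applied to $\omega_0\tau$ in place of $\tau$, to obtain $\check{I}^{\omega_0\tau}_{\sigma,\omega} \in \mathcal{L}^+$. The defining identity $\check{Q}^{\tau}_{\sigma,\omega} = d(\check{I}^{\omega_0\tau}_{\sigma,\omega})$, recorded just after \eqref{eq:IQ}, together with the observation that the ring involution $d$ acts by $v^{i} \mapsto v^{-i}$ and therefore preserves $\mathcal{L}^+$ term-by-term, finishes the argument.

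I do not anticipate any genuine obstacle: once \Cref{prop:posIQ} is in hand, the only thing to verify is the basic combinatorial fact $e \leq_L \tau$ for all $\tau \in W$, and the trivial observation that $d$ preserves non-negativity of coefficients. If anything, the mild point worth highlighting is that the filtration interpretation suggested right before \Cref{prop:posIQ} is precisely what makes the deduction automatic; the corollary is really a consistency check that the $\check{I}^{\tau}$'s sit between $0$ and $\check{P}^{\partial}_{\sigma,\omega}$ as $\tau$ ranges along chains of the weak order from $e$ to $\omega_0$.
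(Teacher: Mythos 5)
Your proposal is correct and follows exactly the paper's own argument: both deduce positivity of $\check{I}^{\tau}_{\sigma,\omega}$ from $\check{I}^{e}_{\sigma,\omega}=0$ together with the monotonicity of \Cref{prop:posIQ}, and then handle $\check{Q}^{\tau}_{\sigma,\omega}$ via the identity $\check{Q}^{\tau}_{\sigma,\omega}=d(\check{I}^{\omega_0\tau}_{\sigma,\omega})$ and the fact that $d$ preserves $\mathcal{L}^+$. You merely spell out the (correct but routine) observation that $e\leq_L\tau$ for every $\tau$, which the paper leaves implicit.
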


\begin{proof}
Since $\check{I}^e_{\sigma,\omega} = 0$, the statement for $\check{I}^{\tau}_{\sigma,\omega}$ follows from \Cref{prop:posIQ}. The involution $d$ clearly preserves $\mathcal{L}^+$.

\end{proof}

	Note that in similarity with \eqref{eq:1} we can write
	\[
	\check{I}^{\tau}_{\sigma,\omega} = \alpha^{-1} \sum_{\sigma < \kappa\leq \omega} \langle f^{\sigma,\tau}, h_\kappa \rangle  \check{P}_{\kappa,\omega}\;,
	\]
	and
	\[
    \check{Q}^{\tau}_{\sigma,\omega} = -\alpha^{-1}\sum_{\sigma < \kappa\leq \omega} \langle f^{\sigma,\tau}, d(h_\kappa)\rangle  d(\check{P}_{\kappa,\omega})\;.
	\]
	Thus, we see that, for a fixed $\tau\in W$, a recursive formula for $P_{\sigma,\omega}$ may be obtained by computing the values of $\langle f^{\kappa_1,\tau}, h_{\kappa_2}\rangle$ and $\langle f^{\kappa_1,\tau}, d(h_{\kappa_2})\rangle$, for $\sigma\leq  \kappa_1< \kappa_2\leq \omega$.

	
We also introduce the normalized variants
\[
I^\tau_{\sigma,\omega} := v^{\ell(\sigma)-\ell(\omega)+1} \check{I}^\tau_{\sigma,\omega},\quad Q^\tau_{\sigma,\tau} := v^{\ell(\sigma)-\ell(\omega)+1} \check{Q}^\tau_{\sigma,\omega} = v^{2(\ell(\sigma)-\ell(\omega))+2} d(I^\tau_{\sigma,\omega} )\in\mathcal{L}\;,
\]
so that
\[
  P^\partial_{\sigma,\omega} = I^\tau_{\sigma,\omega} + Q^\tau_{\sigma,\omega}
\]
holds, for all $\sigma,\omega,\tau\in W$.

	\subsection{A parabolic recursion formula - I}

We would like to examine special cases of decompositions of the form of \eqref{eq:IQ}, which present an algorithmic descent into values of Kazhdan-Lusztig polynomials of a Coxeter group of a smaller rank, that is, a parabolic subgroup of $W$.



For a subset $J\subset S$ of simple reflections, we write $W_J< W$ for the (parabolic) subgroup generated by $J$.

Recall that each of cosets in $W_J \setminus W$ contains a unique element of minimal length. Let ${}^J W \subset W$ be the set of all such minimal length representatives of this coset space.

For $\omega \in W$, we may now write $\omega = \omega_J {}^J\omega$ for the unique elements $\omega_J\in W_J$ and ${}^J\omega \in {}^J W$.

Following Grojnowski-Haiman \cite{GH07}, we define the elements
\[
h_{\omega,J}: = c_{\omega_J} h_{{}^J\omega}\in \mathcal{H}(W)\;,
\]
that give the $J$-\textit{hybrid} basis $\{h_{\omega,J}\}_{\omega\in W}$ for $\mathcal{H}(W)$. We denote by $\{h^{\omega,J}\}_{\omega\in W}$ its dual basis in $\mathcal{H}(W)^\ast$.

It is a consequence of \cite{GH07} that
\[
\check{\gamma}^J_{\sigma,\omega}:=\langle h^{\sigma,J}, c_\omega \rangle\in \mathcal{L}^+\;,
\]
for all $\sigma,\omega\in W$ and $J\subset S$.



Let us examine the behavior of the $J$-hybrid basis relative to the family of Dyer-Lehrer bases, and, in particular, relative to the standard basis.

\begin{lemma}\label{lem:1}
	For all $\tau\in W_J$ and $\sigma,\omega\in W$, we have
	\[
	\langle f^{\sigma,\tau}, h_{\omega,J} \rangle = \left\{ \begin{array}{ll} \langle f^{\sigma_J, \tau}, c_{\omega_J} \rangle & ^J\sigma = {}^J\omega \\ 0 &  {}^J\sigma \neq {}^J\omega \end{array}\right. \;.
	\]
\end{lemma}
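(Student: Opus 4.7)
The plan is to expand $h_{\omega,J} = c_{\omega_J} h_{{}^J\omega}$ in the Dyer--Lehrer basis $\{f_{\sigma',\tau}\}_{\sigma'\in W}$ and read off the coefficient of $f_{\sigma,\tau}$. The key preliminary observation is that, since $\tau \in W_J$, the subfamily $\{f_{v,\tau}\}_{v\in W_J}$ is an $\mathcal{L}$-basis for the parabolic Hecke subalgebra $\mathcal{H}(W_J) \subset \mathcal{H}(W)$: as $v$ ranges over $W_J$ so does $\tau v$, so $\{h_{\tau v}\}_{v\in W_J}$ is the standard basis of $\mathcal{H}(W_J)$, and multiplication by $h_\tau^{-1}\in \mathcal{H}(W_J)$ preserves this subalgebra. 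Combined with the classical fact that $c_{\omega_J}$ (viewed in $\mathcal{H}(W)$) coincides with the corresponding intrinsic canonical element of $\mathcal{H}(W_J)$, this produces a unique expansion
\[
c_{\omega_J} = \sum_{v \in W_J} \beta_v \, f_{v,\tau},\qquad \beta_v = \langle f^{v,\tau}, c_{\omega_J}\rangle .
\]

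I would then multiply on the right by $h_{{}^J\omega}$. Using the standard length-additivity property for parabolic cosets, namely $\ell(u\cdot {}^J\omega) = \ell(u) + \ell({}^J\omega)$ for every $u \in W_J$, we obtain $h_{\tau v} h_{{}^J\omega} = h_{\tau v\cdot {}^J\omega}$, whence
\[
h_{\omega,J} \;=\; c_{\omega_J} h_{{}^J\omega} \;=\; \sum_{v \in W_J} \beta_v \, h_\tau^{-1} h_{\tau \cdot (v\cdot {}^J\omega)} \;=\; \sum_{v \in W_J} \beta_v \, f_{v\cdot {}^J\omega,\tau}.
\]
This presents $h_{\omega,J}$ as a combination of basis vectors $f_{\sigma',\tau}$ with $\sigma'$ running through the coset $W_J \cdot {}^J\omega$, i.e.\ through precisely those $\sigma'$ satisfying ${}^J\sigma' = {}^J\omega$. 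Reading off the coefficient: if ${}^J\sigma \neq {}^J\omega$ then $\sigma$ does not occur and $\langle f^{\sigma,\tau}, h_{\omega,J}\rangle = 0$, while if ${}^J\sigma = {}^J\omega$ then $\sigma = \sigma_J\cdot {}^J\omega$ corresponds to $v = \sigma_J$ and the coefficient is $\beta_{\sigma_J} = \langle f^{\sigma_J,\tau}, c_{\omega_J}\rangle$, as required.

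The only step requiring genuine care is the compatibility between the two incarnations of $c_{\omega_J}$ mentioned above, but this is a classical feature of the Kazhdan--Lusztig basis with respect to parabolic subgroups, following from the fact that $d$, the Bruhat order, and the standard-basis pairing all restrict compatibly to $\mathcal{H}(W_J)$. Once this point and the standard length-additivity for $W_J$-cosets are invoked, the remainder is pure bookkeeping in basis expansions.
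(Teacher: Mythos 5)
Your proof is correct and follows essentially the same route as the paper: both expand $c_{\omega_J}$ over $W_J$ in the basis $\{f_{v,\tau}\}_{v\in W_J}$ (using that $\tau\in W_J$ so this spans $\mathcal{H}(W_J)$, where $c_{\omega_J}$ lives), then multiply on the right by $h_{{}^J\omega}$ and invoke parabolic length additivity to identify $f_{v,\tau}\,h_{{}^J\omega} = f_{v\cdot{}^J\omega,\tau}$. The paper phrases this via the identity $f_{\zeta,\tau} = f_{\zeta_J,\tau}\,h_{{}^J\zeta}$, which is exactly your length-additivity step in disguise, and the remark you flag about $c_{\omega_J}$ being intrinsic to $\mathcal{H}(W_J)$ is indeed the (standard) fact both arguments implicitly rely on.
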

\begin{proof}
	We note that $f_{\zeta,\tau} = f_{\zeta_J, \tau}h_{{}^J\zeta}$ holds, for all $\zeta\in W$. Since $\ell(\tau\zeta)=\ell(\tau\zeta_J)+\ell(^J\zeta)$, the identity follows from multiplying the equation
	\[
	c_{\omega_J} = \sum_{\upsilon \in W_J} \langle f^{\upsilon,\tau} , c_{\omega_J} \rangle f_{\upsilon, \tau}
	\]
	by $h_{^J\omega}$ on the right side.
\end{proof}

The following proposition takes note that the polynomials $\check{\gamma}^J_{\sigma,\omega}$ are linearly computable out of values of corresponding Kazhdan-Lusztig polynomials for the groups $W$ and $W_J$.

In order to formulate it in a normalized fashion, let us introduce the normalized variant of Grojnowski-Haiman polynomials: We define $\gamma^J_{\sigma,\omega}:= v^{\ell(\sigma)-\ell(\omega)} \check{\gamma}^J_{\sigma,\omega}$, for all $\sigma,\omega\in W$ and $J\subset S$.

\begin{proposition}\label{prop:lin}
	For all $\sigma,\omega\in W$, we have
	\[
	\sum_{\kappa\in W_J} P_{\sigma_J, \kappa} \gamma^J_{\kappa\cdot {}^J\sigma, \omega} = P_{\sigma, \omega}\;.
	\]
	In particular, for a fixed $\upsilon\in {}^J W$, we may write in matrix notation
	\[
	\left( P_{\zeta, \kappa}\right)_{\zeta,\kappa\in W_J}\cdot (\gamma^J_{\kappa \upsilon, \omega})_{\kappa\in W_J} = (P_{\zeta\upsilon, \omega})_{\zeta\in W_J}\;.
	\]
	
\end{proposition}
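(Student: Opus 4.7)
The plan is to extract the identity from a single change-of-basis calculation of $\check{P}_{\sigma,\omega} = \langle h^\sigma, c_\omega\rangle$, inserting the $J$-hybrid basis through its dual. Since $\{h_{\kappa,J}\}_{\kappa\in W}$ and $\{h^{\kappa,J}\}_{\kappa\in W}$ are dual bases, we have $h^\sigma = \sum_{\kappa\in W}\langle h^\sigma, h_{\kappa,J}\rangle h^{\kappa,J}$ in $\mathcal{H}(W)^\ast$. Pairing with $c_\omega$ and recalling that $\check{\gamma}^J_{\kappa,\omega} = \langle h^{\kappa,J},c_\omega\rangle$ yields
\[
\check{P}_{\sigma,\omega} = \sum_{\kappa\in W}\langle h^\sigma, h_{\kappa,J}\rangle \check{\gamma}^J_{\kappa,\omega}.
\]

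The next step is to evaluate $\langle h^\sigma, h_{\kappa,J}\rangle$ explicitly. I would expand $h_{\kappa,J} = c_{\kappa_J}h_{{}^J\kappa}$ using the classical fact that the Kazhdan-Lusztig basis of the parabolic Hecke subalgebra $\mathcal{H}(W_J)\subset \mathcal{H}(W)$ agrees with the restriction of the ambient one (both are characterized by the same bar-invariance and triangularity conditions). This gives $c_{\kappa_J} = \sum_{\zeta\in W_J}\check{P}_{\zeta,\kappa_J}h_\zeta$. Combined with length-additivity $\ell(\zeta\cdot{}^J\kappa) = \ell(\zeta)+\ell({}^J\kappa)$ valid for $\zeta\in W_J$ and ${}^J\kappa\in{}^JW$, this produces $h_{\kappa,J} = \sum_{\zeta\in W_J}\check{P}_{\zeta,\kappa_J}h_{\zeta\cdot{}^J\kappa}$. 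By the uniqueness of the factorization $\sigma = \sigma_J\cdot{}^J\sigma$, the coefficient of $h_\sigma$ equals $\check{P}_{\sigma_J,\kappa_J}$ when ${}^J\sigma = {}^J\kappa$ and vanishes otherwise.

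Putting the pieces together, only $\kappa\in W$ with ${}^J\kappa = {}^J\sigma$ contributes, so after relabeling $\kappa_J\to\kappa$ the sum collapses to
\[
\check{P}_{\sigma,\omega} = \sum_{\kappa\in W_J}\check{P}_{\sigma_J,\kappa}\;\check{\gamma}^J_{\kappa\cdot{}^J\sigma,\omega}.
\]
The normalized identity follows by multiplying through by $v^{\ell(\sigma)-\ell(\omega)}$, with the exponent bookkeeping closing up automatically from $\ell(\sigma) = \ell(\sigma_J)+\ell({}^J\sigma)$ together with $\ell(\kappa\cdot{}^J\sigma) = \ell(\kappa)+\ell({}^J\sigma)$ for $\kappa\in W_J$. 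The matrix assertion is then obtained by fixing $\upsilon\in{}^JW$ and letting $\sigma$ range over $W_J\cdot\upsilon$, so that $\sigma_J$ runs over $W_J$ and ${}^J\sigma$ is constantly $\upsilon$.

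The only substantive input beyond formal manipulation is the stability of the canonical basis under the inclusion $\mathcal{H}(W_J)\hookrightarrow\mathcal{H}(W)$; this is a standard consequence of the uniqueness characterization of $\{c_\omega\}$, and once invoked the rest of the argument is a direct dual-basis computation.
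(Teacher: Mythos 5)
Your proof is correct and is essentially the same as the paper's: the paper invokes its Lemma~\ref{lem:1} specialized to $\tau = e$, whose proof is precisely the dual-basis expansion $h_{\kappa,J} = c_{\kappa_J}h_{{}^J\kappa} = \sum_{\zeta\in W_J}\check{P}_{\zeta,\kappa_J}h_{\zeta\cdot{}^J\kappa}$ that you carry out explicitly, and the normalization bookkeeping via $\ell(\sigma) = \ell(\sigma_J)+\ell({}^J\sigma)$ matches the paper's. The one auxiliary fact you name explicitly (compatibility of the canonical basis of $\mathcal{H}(W_J)$ with its embedding into $\mathcal{H}(W)$) is indeed the ingredient the paper leaves implicit, and your justification of it via the uniqueness characterization is correct.
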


\begin{proof}
	By Lemma \ref{lem:1} (with $\tau=e$),
	\[
	\langle h^{\sigma}, c_\omega \rangle = \sum_{\kappa'\in W} \langle h^{\sigma}, h_{\kappa',J}\rangle \langle h^{\kappa',J}, c_\omega\rangle = \sum_{\kappa\in W_J} \langle h^{\sigma_J}, c_\kappa\rangle \langle h^{\kappa \cdot {}^J\sigma, J}, c_\omega\rangle\;.
	\]
Normalization yields the results, when noting that for $\kappa\in W_J$,
\begin{equation*}
\begin{aligned}
\ell(\sigma) - \ell(\omega) &= \ell(\sigma_J) + \ell({}^J\sigma) - \ell(\omega)\\
&= \ell(\sigma_J) -  \ell(\kappa) +\ell(\kappa ) + \ell({}^J\sigma) - \ell(\omega)\\
&= (\ell(\sigma_J) -  \ell(\kappa) ) +( \ell(\kappa\cdot {}^J\sigma ) - \ell(\omega))\;.
\end{aligned}
\end{equation*}

\end{proof}

\begin{proposition}\label{prop:inductive}
	For $\tau\in W_J$ and $\sigma,\omega \in W$, we have the decompositions
	\[
	\check{I}^\tau_{\sigma,\omega} = \sum_{\kappa\in W_J\;:\; \sigma < \kappa\cdot {}^J\sigma \leq \omega } \check{I}^\tau_{\sigma_J,\kappa} \check{\gamma}^{J}_{\kappa\cdot {}^J\sigma,\omega}\;,
	\]
and
\[
I^\tau_{\sigma,\omega} = \sum_{\kappa\in W_J\;:\; \sigma < \kappa\cdot {}^J\sigma \leq \omega } I^\tau_{\sigma_J,\kappa} \gamma^{J}_{\kappa\cdot {}^J\sigma,\omega}\;,
\]
	where each of the summands is in $\mathcal{L}^+$.
\end{proposition}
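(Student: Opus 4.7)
The strategy is to combine \Cref{lem:1} with the expansion of $c_\omega$ in the $J$-hybrid basis. The identity $c_\omega = \sum_{\kappa \in W} \check{\gamma}^J_{\kappa,\omega} h_{\kappa,J}$ follows by definition of the dual basis pairing, and pairing against $f^{\sigma,\tau}$ (for $\tau\in W_J$) then reduces, via \Cref{lem:1}, to the cosets with ${}^J\kappa = {}^J\sigma$. Writing such $\kappa$ uniquely as $\kappa' \cdot {}^J\sigma$ with $\kappa' \in W_J$, we obtain
\[
\langle f^{\sigma,\tau}, c_\omega\rangle = \sum_{\kappa'\in W_J} \check{\gamma}^J_{\kappa'\cdot{}^J\sigma,\omega}\, \langle f^{\sigma_J,\tau}, c_{\kappa'}\rangle.
\]
Applying the same computation with $\tau$ replaced by $e$ (so that $f^{\sigma,e}=h^\sigma$ and $f^{\sigma_J,e}=h^{\sigma_J}$) yields an analogous expression for $\langle h^\sigma, c_\omega\rangle$. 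Taking the difference and dividing by $\alpha$ produces
\[
\check{I}^\tau_{\sigma,\omega} = \sum_{\kappa'\in W_J} \check{\gamma}^J_{\kappa'\cdot{}^J\sigma,\omega}\, \check{I}^\tau_{\sigma_J,\kappa'},
\]
which is precisely the claimed decomposition once the summation range is justified.

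To handle the range, I would invoke the standard fact that for $\kappa'\in W_J$, the Bruhat inequality $\sigma \leq \kappa'\cdot{}^J\sigma$ in $W$ is equivalent to $\sigma_J \leq \kappa'$ in $W_J$ (this is the compatibility of Bruhat order with the $(W_J,{}^JW)$-decomposition, since multiplication by ${}^J\sigma$ on the right preserves the length-additive structure). Thus the nonvanishing conditions $\sigma_J < \kappa'$ for $\check{I}^\tau_{\sigma_J,\kappa'}$ and $\kappa'\cdot{}^J\sigma \leq \omega$ for $\check{\gamma}^J_{\kappa'\cdot{}^J\sigma,\omega}$ combine into the single condition $\sigma < \kappa'\cdot{}^J\sigma \leq \omega$ stated in the proposition.

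For the normalized version, the length identity
\[
\ell(\sigma)-\ell(\omega)+1 = \bigl(\ell(\sigma_J)-\ell(\kappa')+1\bigr) + \bigl(\ell(\kappa'\cdot{}^J\sigma) - \ell(\omega)\bigr),
\]
using $\ell(\sigma)=\ell(\sigma_J)+\ell({}^J\sigma)$ and $\ell(\kappa'\cdot{}^J\sigma) = \ell(\kappa')+\ell({}^J\sigma)$, shows that multiplying the unnormalized identity by $v^{\ell(\sigma)-\ell(\omega)+1}$ distributes correctly across the product $\check{I}^\tau_{\sigma_J,\kappa'}\check{\gamma}^J_{\kappa'\cdot{}^J\sigma,\omega}$ to produce $I^\tau_{\sigma_J,\kappa'}\gamma^J_{\kappa'\cdot{}^J\sigma,\omega}$. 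Positivity of each summand is immediate: $\check{I}^\tau_{\sigma_J,\kappa'}\in\mathcal{L}^+$ by \Cref{cor:ppos}, and $\check{\gamma}^J_{\kappa'\cdot{}^J\sigma,\omega}\in\mathcal{L}^+$ by the Grojnowski-Haiman result quoted preceding \Cref{lem:1}.

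The main obstacle I foresee is the equivalence $\sigma < \kappa'\cdot{}^J\sigma \iff \sigma_J < \kappa'$ for $\kappa'\in W_J$; while it is a standard consequence of parabolic subword characterizations of Bruhat order, I would need to cite or briefly verify it (via the Deodhar-style lifting property applied to the factorization $\sigma = \sigma_J\cdot{}^J\sigma$) to make the indexing in the sum rigorous.
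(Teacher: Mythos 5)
Your proof is correct and follows the same route as the paper, namely expanding $c_\omega$ in the $J$-hybrid basis and applying \Cref{lem:1} to $f^{\sigma,\tau}-h^{\sigma}$, exactly as in the argument of \Cref{prop:lin}. The equivalence you flag as a potential obstacle — that for $\kappa'\in W_J$ one has $\sigma_J < \kappa'$ in $W_J$ if and only if $\sigma < \kappa'\cdot{}^J\sigma$ in $W$ — is indeed standard (cf.\ \cite[Proposition 2.5.1]{BB05} together with the lifting property), is used implicitly by the paper here, and is essentially what \Cref{lem:coset} records later in the text.
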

\begin{proof}
	In similarity with the argument in the proof of \Cref{prop:lin}, we may write
\[
\check{I}^\tau_{\sigma,\omega} = \alpha^{-1}\sum_{\kappa\in W_J} \langle f^{\sigma_J,\tau}-  h^{\sigma_J}, c_{\kappa} \rangle \langle h^{\kappa\cdot{}^J\sigma,J}, c_\omega \rangle\;,
\]
and repeat the same proof's argument for the normalized variant.
\end{proof}




Let us write $w_0^J\in W_J$ for the longest element in the parabolic subgroup.

For any $\sigma,\omega\in W$ and $J\subset S$, we denote the elements
\[
I^J_{\sigma,\omega} := I^{\omega_{0}^J}_{\sigma,\omega},\quad Q^J_{\sigma,\omega} := Q^{\omega_0^J}_{\sigma,\omega},\quad \check{I}^J_{\sigma,\omega} := \check{I}^{\omega_{0}^J}_{\sigma,\omega},\quad \check{Q}^J_{\sigma,\omega} := \check{Q}^{\omega_0^J}_{\sigma,\omega}\;\in \mathcal{L}^+\;.
\]

\begin{corollary}\label{cor:indu}
	For $\sigma,\omega\in W$ and $J\subset S$, we have
	\[
	I^J_{\sigma,\omega} = \sum_{\kappa\in W_J\;:\; \sigma < \kappa\cdot {}^J\sigma \leq \omega } P^{\partial}_{\sigma_J,\kappa} \gamma^{J}_{\kappa\cdot {}^J\sigma,\omega}\;.
	\]
	
\end{corollary}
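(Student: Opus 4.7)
The plan is to combine Proposition \ref{prop:inductive} (specialized to $\tau = \omega_0^J$) with a direct identification of $I^{\omega_0^J}_{\sigma_J,\kappa}$ with $P^\partial_{\sigma_J,\kappa}$ whenever both arguments lie in the parabolic subgroup $W_J$.

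First, I would invoke the normalized statement of \Cref{prop:inductive} with the admissible choice $\tau = \omega_0^J \in W_J$, giving
\[
I^J_{\sigma,\omega} = I^{\omega_0^J}_{\sigma,\omega} = \sum_{\kappa\in W_J\;:\; \sigma < \kappa\cdot {}^J\sigma \leq \omega } I^{\omega_0^J}_{\sigma_J,\kappa}\, \gamma^{J}_{\kappa\cdot {}^J\sigma,\omega}\;.
\]
It then suffices to prove the identity $I^{\omega_0^J}_{\sigma_J,\kappa} = P^\partial_{\sigma_J,\kappa}$ for all $\sigma_J, \kappa \in W_J$.

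The key step is to transport the identity $\check{I}^{\omega_0}_{\sigma,\omega} = \check{P}^\partial_{\sigma,\omega}$ from $W$ down to $W_J$. The observation is that $\omega_0^J$ plays the role of $\omega_0$ inside $W_J$: for $\sigma \in W_J$ we have $\ell(\omega_0^J\sigma) + \ell(\sigma^{-1}) = \ell(\omega_0^J)$, so the same argument as in \eqref{eq:invd} yields $f_{\sigma,\omega_0^J} = d(h_\sigma)$ for all $\sigma \in W_J$. Hence on the subalgebra $\mathcal{H}(W_J) \subset \mathcal{H}(W)$ the two bases $\{f_{\zeta,\omega_0^J}\}_{\zeta\in W_J}$ and $\{f_{\zeta,\omega_0}\}_{\zeta\in W_J}$ coincide. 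For $\omega \in W_J$, the canonical basis element $c_\omega$ lies in $\mathcal{H}(W_J)$ (a standard fact: the Kazhdan-Lusztig basis is compatible with parabolic subgroups, provable by uniqueness of the $d$-invariant basis satisfying the degree conditions). Expanding $c_\omega$ in each of the two (total) bases, the $f_{\sigma,\omega_0^J}$-coefficient and the $f_{\sigma,\omega_0} = d(h_\sigma)$-coefficient must therefore agree for $\sigma\in W_J$. In other words,
\[
\langle f^{\sigma,\omega_0^J},c_\omega\rangle \;=\; \langle f^{\sigma,\omega_0},c_\omega\rangle \;=\; \langle d^\ast(h^\sigma),c_\omega\rangle\;,\qquad \sigma,\omega\in W_J\;.
\]

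Subtracting $\langle h^\sigma,c_\omega\rangle$ from both sides and dividing by $\alpha$ gives $\check{I}^{\omega_0^J}_{\sigma,\omega} = \check{P}^\partial_{\sigma,\omega}$, and applying the common normalizing factor $v^{\ell(\sigma)-\ell(\omega)+1}$ yields $I^{\omega_0^J}_{\sigma,\omega} = P^\partial_{\sigma,\omega}$ for $\sigma,\omega\in W_J$. Substituting into the display above completes the proof.

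The only non-routine point is the basis-coefficient comparison, which hinges on the fact that $c_\omega \in \mathcal{H}(W_J)$ for $\omega\in W_J$; if needed, I would spell this out in a brief preparatory remark rather than treat it as an obstacle, since it is a standard consequence of the characterization of the Kazhdan-Lusztig basis.
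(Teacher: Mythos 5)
Your proposal is correct and follows essentially the same route as the paper: specialize Proposition~\ref{prop:inductive} to $\tau=\omega_0^J$, then identify $I^{\omega_0^J}_{\sigma_J,\kappa}$ with $P^\partial_{\sigma_J,\kappa}$ by noting that $\omega_0^J$ plays for $W_J$ the role that $\omega_0$ plays for $W$ in \eqref{eq:invd}. The paper compresses this second step into a single sentence (``treating $I^{\omega_0^J}_{\sigma_J,\kappa}$ as polynomials defined in terms of $\mathcal{H}(W_J)$\dots''), leaving implicit the compatibility of the Kazhdan--Lusztig basis, and of the Dyer--Lehrer dual basis pairings, with the parabolic subalgebra $\mathcal{H}(W_J)\subset\mathcal{H}(W)$; you correctly identify that point as the only non-routine ingredient and spell it out.
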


\begin{proof}
Treating $I^{\omega_0^J}_{\sigma_J,\kappa}$, for $\sigma \in W$ and $\kappa\in W_J$, as polynomials that are defined in terms of the algebra $\mathcal{H}(W_J)$, we see the equality $I^{\omega_0^J}_{\sigma_J,\kappa}= P^\partial_{\sigma_J,\kappa}$ coming from \eqref{eq:invd}. Thus, the statement follows from \Cref{prop:inductive}.

\end{proof}


Having expressed $I^J_{\sigma,\omega}$ in recursive terms that depend on the parabolic subgroup $W_J$, we would like to describe the remainder $Q^J_{\sigma,\omega} = P^\partial_{\sigma,\omega} -I^J_{\sigma,\omega}$ through a recursion of a similar nature to the familiar identity \eqref{eq:1}.

For $\sigma,\omega\in W$ and $J\subset S$, let us define the \textit{$J$-relative $R$-polynomials} as
\[
\check{R}_{\sigma,\omega,J} :=  \langle f^{\sigma, \omega_0\omega_{0}^J}, h_{\omega}\rangle =  d(\langle  f^{\sigma, \omega_{0}^J}, d(h_{\omega})\rangle )\in \mathcal{L}\;,
\]
with the last equality being a consequence of \eqref{eq:invd}, and their normalized variants
\[
R_{\sigma,\omega,J} := v^{\ell(\sigma)-\ell(\omega)}\check{R}_{\sigma,\omega,J}\;.
\]

Note, that $R_{\sigma,\omega,\emptyset}=R_{\sigma,\omega}, R_{\sigma,\omega,S}=\delta_{\sigma,\omega}$.


The analog of \eqref{eq:1} is now easily seen to hold.

\begin{proposition}\label{prop:qR}
	For all $\sigma,\omega\in W$ and $J\subset S$, the identity
\[
Q^J_{\sigma,\omega} =  \frac{q^{\ell(\omega)-\ell(\sigma)}}{1-q} d\left( \sum_{\sigma < \kappa\leq \omega} R_{\sigma,\kappa,J} P_{\kappa, \omega}\right)
\]
holds.
\end{proposition}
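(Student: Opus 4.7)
The plan is to perform a direct computation by expanding $c_\omega$ in the standard basis inside the definition of $\check{Q}^J_{\sigma,\omega}$, then normalize. Specifically, I would start from the formula already given in the excerpt,
\[
\check{Q}^J_{\sigma,\omega} = d\!\left(\alpha^{-1}\,\langle f^{\sigma,\omega_0\omega_0^J}-h^\sigma,\, c_\omega\rangle\right),
\]
which is the case $\tau=\omega_0^J$ of the second displayed identity after Corollary~3.5 (recall that $d$ is an involution, so the outer $d$ in the definition of $\check{Q}^\tau$ pairs with $\omega_0\tau$ on the other side). Substituting the standard-basis expansion $c_\omega=\sum_{\kappa}\check{P}_{\kappa,\omega}\,h_\kappa$ and using the defining pairings
\[
\langle f^{\sigma,\omega_0\omega_0^J},\,h_\kappa\rangle = \check{R}_{\sigma,\kappa,J},\qquad \langle h^\sigma,\,h_\kappa\rangle = \delta_{\sigma,\kappa},
\]
I would obtain
\[
\check{Q}^J_{\sigma,\omega} = d\!\left(\alpha^{-1}\sum_{\kappa\in W}\bigl(\check{R}_{\sigma,\kappa,J}-\delta_{\sigma,\kappa}\bigr)\check{P}_{\kappa,\omega}\right).
\]

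Next, I would remove the diagonal term by verifying $\check{R}_{\sigma,\sigma,J}=1$. This follows from the Bruhat-unitriangularity of the change-of-basis matrix between $\{f_{\omega,\tau}\}_{\omega\in W}$ and $\{h_\omega\}_{\omega\in W}$: Lemma~3.2 (recursion), applied inductively starting from $f_{\omega,e}=h_\omega$, shows that $f_{\omega,\tau}=h_\omega+\sum_{\zeta<\omega}c^\omega_\zeta h_\zeta$ with $c^\omega_\zeta\in\mathcal{L}$, and dualizing gives $\langle f^{\sigma,\tau},h_\sigma\rangle=1$. Combining this with the vanishing of $\check{P}_{\kappa,\omega}$ outside $\kappa\leq\omega$, the sum collapses to
\[
\check{Q}^J_{\sigma,\omega} = d\!\left(\alpha^{-1}\sum_{\sigma<\kappa\leq\omega}\check{R}_{\sigma,\kappa,J}\,\check{P}_{\kappa,\omega}\right).
\]

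The remaining step is bookkeeping of normalizations. Using $\check{R}_{\sigma,\kappa,J}\check{P}_{\kappa,\omega}=v^{\ell(\omega)-\ell(\sigma)}R_{\sigma,\kappa,J}P_{\kappa,\omega}$, the relations $d(v)=v^{-1}$, $d(\alpha)=-\alpha$, and the identities $v\alpha=1-q^{-1}$, $v^{2}=q^{-1}$, I would compute
\[
v^{\ell(\sigma)-\ell(\omega)+1}\cdot d\!\left(\alpha^{-1}v^{\ell(\omega)-\ell(\sigma)}\right) = -v^{2(\ell(\sigma)-\ell(\omega))+1}\alpha^{-1} = \frac{q^{\ell(\omega)-\ell(\sigma)}}{1-q},
\]
which is exactly the prefactor required. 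Multiplying through and applying the outer involution $d$ to the sum then yields the stated identity for $Q^J_{\sigma,\omega}=v^{\ell(\sigma)-\ell(\omega)+1}\check{Q}^J_{\sigma,\omega}$.

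There is no serious obstacle here; the proof is entirely formal once one recognizes the pairing $\langle f^{\sigma,\omega_0\omega_0^J},h_\kappa\rangle$ as $\check{R}_{\sigma,\kappa,J}$ by definition. The only point requiring independent justification is the vanishing of the $\kappa=\sigma$ contribution, i.e.\ the identity $\check{R}_{\sigma,\sigma,J}=1$; the rest reduces to tracking the normalization exponents and the sign produced by $d(\alpha)=-\alpha$.
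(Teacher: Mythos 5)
Your proposal is correct and takes essentially the same approach as the paper: both expand $c_\omega$ on the standard basis inside the pairing with $f^{\sigma,\omega_0\omega_0^J}$, drop the $\kappa=\sigma$ diagonal term via $\check R_{\sigma,\sigma,J}=1$ (you re-derive this unitriangularity from Lemma \ref{recursion}, the paper cites Proposition \ref{prop:Rformula}, which is proved the same way), and then track the normalization constants.
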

\begin{proof}
Using the facts $\check{Q}^J_{\sigma,\omega}= d(\check{I}^{\omega_0\omega_0^J}_{\sigma,\omega})$ and $R_{\sigma,\sigma,J}=1$ (\Cref{prop:Rformula}), this is the normalized form of the equality
\[
\langle f^{\sigma,\omega_0\omega_0^J} - h^{\sigma}, c_\omega \rangle = \sum_{\kappa\in W} \langle f^{\sigma,\omega_0\omega_0^J} , h_\kappa\rangle \langle h^\kappa, c_\omega \rangle -  \langle h^{\sigma}, c_\omega \rangle = \sum_{\sigma < \kappa\leq \omega} \langle f^{\sigma,\omega_0\omega_0^J} , h_\kappa\rangle \langle h^\kappa, c_\omega \rangle\;.
\]

\end{proof}


Our main \Cref{thm:A} now readily follows from \Cref{cor:indu} and \Cref{prop:qR}.

\begin{remark}\label{rem:1}
An evident corollary of \Cref{thm:A} (and Lemma \ref{prop:lin}) is an algorithm that computes the value of $P_{\sigma,\omega}$, for given $\sigma,\omega\in W$, out of the values of the polynomials $R_{\sigma,\zeta,J}$ and $P_{\zeta,\omega}$ for $\sigma < \zeta \leq \omega$ and $P_{\sigma_J, \kappa}$ for $\sigma_J < \kappa \leq c$, where $c\in W_J$ is the maximal element with $c \cdot{}^J\sigma \leq \omega$.
\end{remark}

\subsection{ A parabolic recursion formula - II}

Let us revisit the familiar theme of Kazhdan-Lusztig theory that follows from the identity \eqref{eq:1}, where a computation of the polynomials $P_{\sigma,\omega}$ is recursively substituted by computations of the polynomials $R_{\sigma,\omega}$. While a geometric handle on the situation, with its strong positivity benefits, is often lost in the process, simpler algorithmic formulas such as in \Cref{prop:Rformula} may present an advantage.

Indeed, the decomposition of \Cref{thm:A} in terms of $J$-relative $R$-polynomials can also be stated without an explicit use of the Kazhdan-Lusztig basis $\{c_\omega\}_{\omega\in W}$.

To that aim let us first observe a natural extension of the phenomenon in \Cref{lem:1} that does not involve the $J$-hybrid basis.

\begin{lemma}\label{lem:2}
	For all $\sigma,\omega\in W$, $J\subset S$ and $\tau,\kappa\in W_J$, we have
	\[
	\langle f^{\sigma,\tau}, f_{\omega,\kappa} \rangle = \left\{ \begin{array}{ll} \langle f^{\sigma_J, \tau}, f_{\omega_J,\kappa} \rangle & ^J\sigma = {}^J\omega \\ 0 &  {}^J\sigma \neq {}^J\omega \end{array}\right. \;.
	\]

As a consequence,
\[
\langle f^{\sigma,\omega_0}, f_{\omega,\omega_0\omega_0^J} \rangle = \left\{ \begin{array}{ll} \check{R}_{\sigma_J,\omega_J} & ^J\sigma = {}^J\omega \\ 0 &  {}^J\sigma \neq {}^J\omega \end{array}\right.
\]
holds, and $\langle f^{\sigma,\omega_0\omega_0^J}, f_{\omega,\omega_0} \rangle = d(\langle f^{\sigma,\omega_0}, f_{\omega,\omega_0\omega_0^J} \rangle)$.
\end{lemma}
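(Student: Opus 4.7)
My plan is to prove the main two-case identity first by a direct coset argument parallel to that of \Cref{lem:1}, and then obtain both consequences by using the involutions \eqref{eq:invd}, \eqref{eq:omega0} to shift all relevant indices back into $W_J$, where the main identity applies.

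For the main identity, I will exploit the factorization of Dyer--Lehrer basis elements along the $W_J\setminus W$ coset decomposition. Since $\kappa, \omega_J \in W_J$ and ${}^J\omega \in {}^JW$, the length additivity $\ell(\kappa\omega) = \ell(\kappa\omega_J) + \ell({}^J\omega)$ gives
\[
f_{\omega,\kappa} = h_\kappa^{-1} h_{\kappa\omega_J} h_{{}^J\omega} = f_{\omega_J,\kappa}\cdot h_{{}^J\omega}\;,
\]
and the same reasoning yields $f_{\upsilon,\tau}\cdot h_{{}^J\omega} = f_{\upsilon\cdot{}^J\omega,\tau}$ for any $\upsilon \in W_J$. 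Because $\tau \in W_J$, the family $\{f_{\upsilon,\tau}\}_{\upsilon \in W_J}$ is a basis of $\mathcal{H}(W_J)$; expanding $f_{\omega_J,\kappa}$ in it and right-multiplying by $h_{{}^J\omega}$ expresses $f_{\omega,\kappa}$ as a $\mathcal{L}$-linear combination of the elements $\{f_{\upsilon\cdot{}^J\omega,\tau}\}_{\upsilon\in W_J}$. Matching the coefficient of $f_{\sigma,\tau}$ on both sides then delivers both cases of the two-case formula.

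For the first part of the consequence, I would combine the pairing identity $d(\langle\phi,x\rangle) = \langle d^\ast(\phi), d(x)\rangle$ with \eqref{eq:invd} and \eqref{eq:omega0} to rewrite
\[
\langle f^{\sigma,\omega_0}, f_{\omega,\omega_0\omega_0^J}\rangle = d\bigl(\langle h^\sigma, f_{\omega,\omega_0^J}\rangle\bigr)\;,
\]
whose inner indices $e$ and $\omega_0^J$ now lie in $W_J$. Applying the main identity and the evaluation $\langle h^{\sigma_J}, d(h_{\omega_J})\rangle = d(\check R_{\sigma_J,\omega_J})$ (by definition of the $R$-polynomial, together with the standard compatibility of $R$-polynomials under the inclusion $\mathcal{H}(W_J)\hookrightarrow\mathcal{H}(W)$), and then applying $d$ once more, produces $\check R_{\sigma_J,\omega_J}$ when ${}^J\sigma = {}^J\omega$ and $0$ otherwise.

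For the final $d$-symmetry, the same dualization transforms the left-hand side into $d(\langle f^{\sigma,\omega_0^J}, h_\omega\rangle)$, where now both $\omega_0^J$ and $e$ lie in $W_J$, so the main identity once again applies. Using \eqref{eq:omega0} within $W_J$ to identify $f^{\sigma_J,\omega_0^J} = d^\ast(h^{\sigma_J})$ in $\mathcal{H}(W_J)^\ast$, this pairing is seen to equal $\check R_{\sigma_J,\omega_J}$ as well, and matching the two computations confirms the asserted identity. The calculations are routine; no genuine obstacle arises beyond carefully tracking the $d^\ast$-involution on both factors of the pairing and invoking the compatibility of $R$-polynomials across the parabolic inclusion.
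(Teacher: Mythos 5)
Your proposal is correct and follows essentially the same route as the paper: the main two-case identity is obtained by expanding $f_{\omega_J,\kappa}$ in the Dyer--Lehrer basis of $\mathcal{H}(W_J)$ and right-multiplying by $h_{{}^J\omega}$ (exactly the ``mutatis mutandis'' version of \Cref{lem:1}), and both consequences then follow by applying the $d$-involution to the pairing, using \eqref{eq:invd} and \eqref{eq:omega0} to move the twists $\omega_0$ and $\omega_0\omega_0^J$ back into $W_J$, and recognizing $f_{\omega_J,\omega_0^J}=d(h_{\omega_J})$ inside $\mathcal{H}(W_J)$. The only stylistic difference is that you spell out in full the basis-expansion argument and the $R$-polynomial compatibility across the parabolic inclusion, both of which the paper leaves implicit.
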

\begin{proof}
The first statement is proved as in \Cref{lem:1} mutatis mutandis.

It now follows that
\begin{equation*}
\begin{aligned}
\langle f^{\sigma,\omega_0}, f_{\omega,\omega_0\omega_0^J} \rangle &= d\left(\langle d^\ast(f^{\sigma,\omega_0}), d(f_{\omega,\omega_0\omega_0^J}) \rangle\right) \\
&= d\left(\langle h^{\sigma}, f_{\omega,\omega_0^J} \rangle\right)\\
&=\left\{ \begin{array}{ll}  d\left(\langle h^{\sigma_J}, f_{\omega_J,\omega_0^J} \rangle\right) & ^J\sigma = {}^J\omega \\ 0 &  {}^J\sigma \neq {}^J\omega \end{array}\right.\;,
\end{aligned}
\end{equation*}
which gives the expression for $\check{R}_{\sigma_J,\omega_J}$ when bearing in mind that $h_{\omega_J}$ is sent to $ f_{\omega_J,\omega_0^J}$ by the $d$-involution for the algebra $\mathcal{H}(W_J)$.

The final equality follows from a similar consideration.
\end{proof}

\begin{proposition}\label{prop:recJR}
  For all $\sigma,\omega\in W$ and $J\subset S$, the formulas
  \[
  \check{R}_{\sigma,\omega} = \sum_{\kappa\in W_J\;:\; \sigma \leq \kappa\cdot{}^J\sigma \leq \omega } \check{R}_{\sigma_J, \kappa} \check{R}_{\kappa \cdot{}^J\sigma, \omega, J},\quad R_{\sigma,\omega} = \sum_{\kappa\in W_J\;:\; \sigma \leq \kappa\cdot{}^J\sigma \leq \omega } R_{\sigma_J, \kappa} R_{\kappa\cdot {}^J\sigma, \omega, J}
  \]
  hold.

  The inverse relation
  \[
  R_{\sigma,\omega,J} = \sum_{\kappa\in W_J\;:\; \sigma \leq \kappa\cdot{}^J\sigma \leq \omega } d(R_{\sigma_J, \kappa}) R_{\kappa \cdot{}^J\sigma, \omega}
  \]
  is valid as well.
\end{proposition}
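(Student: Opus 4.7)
The plan is to derive all three identities by the same change-of-basis device already used in the proofs of \Cref{prop:lin} and \Cref{prop:inductive}, exploiting the inner product evaluations supplied by \Cref{lem:2} and the observation that $d^\ast(h^\sigma) = f^{\sigma,\omega_0}$ from \eqref{eq:omega0}. Under this last identity, the ordinary $R$-polynomials acquire the convenient form $\check{R}_{\sigma,\omega} = \langle f^{\sigma,\omega_0}, h_\omega\rangle$, which slots directly into the same formalism as the $J$-relative $R$-polynomials.

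First I would expand $h_\omega$ in the Dyer-Lehrer basis attached to $\tau = \omega_0\omega_0^J$:
\[
h_\omega = \sum_{\kappa'\in W} \langle f^{\kappa',\omega_0\omega_0^J}, h_\omega\rangle\, f_{\kappa',\omega_0\omega_0^J} = \sum_{\kappa'\in W} \check{R}_{\kappa',\omega,J}\, f_{\kappa',\omega_0\omega_0^J}\;.
\]
Applying $\langle f^{\sigma,\omega_0},-\rangle$ to both sides and invoking \Cref{lem:2}, the pairing kills all summands except those with ${}^J\kappa' = {}^J\sigma$, each of which contributes the factor $\check{R}_{\sigma_J,\kappa'_J}$. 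Reparametrizing $\kappa' = \kappa\cdot{}^J\sigma$ with $\kappa\in W_J$ yields the unnormalized form
\[
\check{R}_{\sigma,\omega} = \sum_{\kappa\in W_J} \check{R}_{\sigma_J,\kappa}\, \check{R}_{\kappa\cdot{}^J\sigma,\omega,J}\;.
\]
For the inverse identity, I would instead write $h_\omega = \sum_{\kappa'\in W} \check{R}_{\kappa',\omega}\, f_{\kappa',\omega_0}$ and apply $\langle f^{\sigma,\omega_0\omega_0^J},-\rangle$, using now the second formula of \Cref{lem:2}, which replaces $\check{R}_{\sigma_J,\kappa'_J}$ by $d(\check{R}_{\sigma_J,\kappa'_J})$.

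The normalized variants follow by straightforward bookkeeping with the length function: for $\kappa\in W_J$ one has $\ell(\kappa\cdot{}^J\sigma) = \ell(\kappa)+\ell({}^J\sigma)$ and $\ell(\sigma) = \ell(\sigma_J)+\ell({}^J\sigma)$, whence
\[
\ell(\sigma)-\ell(\omega) = \bigl(\ell(\sigma_J)-\ell(\kappa)\bigr) + \bigl(\ell(\kappa\cdot{}^J\sigma)-\ell(\omega)\bigr)\;,
\]
so the powers of $v$ partition exactly between the two factors. To pin down the stated index ranges, I would combine the vanishing $\check{R}_{\zeta_1,\zeta_2}=0$ (and its $J$-relative counterpart) off the Bruhat order with the standard equivalence $\sigma\leq\kappa\cdot{}^J\sigma \Longleftrightarrow \sigma_J\leq\kappa$ (valid because both sides share the common minimal coset representative ${}^J\sigma$).

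The only minor obstacle I anticipate is verifying that $\check{R}_{\kappa\cdot{}^J\sigma,\omega,J}$ vanishes when $\kappa\cdot{}^J\sigma\not\leq\omega$ — which is needed to trim the summation. This follows because $f^{\sigma,\omega_0\omega_0^J}$ lies in the span of $\{h^\zeta\}_{\zeta\leq\sigma\,\text{or}\,\zeta\geq\sigma}$ (indeed, recursive application of \Cref{recursion} shows that $f_{\omega,\omega_0\omega_0^J}$ is an $\mathcal{L}$-linear combination of $h_{\zeta}$'s with $\zeta\leq\omega$ in the weak-left-order sense), yielding the expected triangularity; once this is established, the two formulas of the proposition are mutually inverse in the bookkeeping sense, which provides an independent sanity check.
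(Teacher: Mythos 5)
Your proof is correct and takes essentially the same route as the paper's: a resolution of the identity via the Dyer-Lehrer bases combined with \Cref{lem:2}, with normalization handled as in the proof of \Cref{prop:lin}. The only slip is in your last paragraph, where the triangularity of $f^{\sigma,\omega_0\omega_0^J}$ with respect to the standard basis is governed by the Bruhat order (not the weak left order, since the recursion of \Cref{recursion} introduces arbitrary reflections rather than simple ones) and the relevant span is $\{h^\zeta\}_{\zeta\geq\sigma}$; in any case the vanishing of $\check{R}_{\kappa\cdot{}^J\sigma,\omega,J}$ outside the Bruhat interval follows directly from \Cref{prop:Rformula}.
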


\begin{proof}
By \Cref{lem:2} the identities amounts to
\[
\langle f^{\sigma,\omega_0}, h_{\omega} \rangle  = \sum_{\kappa'\in W} \langle f^{\sigma,\omega_0}, f_{\kappa', \omega_0\omega_0^J} \rangle \langle f^{\kappa',\omega_0\omega_0^J}, h_{\omega} \rangle\;,
\]
with the normalization constants cancelling out as in the proof of \Cref{prop:lin}.

For the inverse relation we change the order of basis change to reach
\[
\langle f^{\sigma,\omega_0\omega_0^J}, h_{\omega} \rangle  = \sum_{\kappa'\in W} \langle f^{\sigma,\omega_0\omega_0^J}, f_{\kappa',\omega_0} \rangle \langle f^{\kappa',\omega_0}, h_{\omega} \rangle\;,
\]
and apply the last equality in the statement of \Cref{lem:2}.

\end{proof}

Streamlining the above identity through a nested chain of parabolic subgroups serves an argument that $J$-relative $R$-polynomials may be viewed as computational building blocks for Kazhdan-Lusztig polynomials. Let us make this view explicit.

For a given chain $\emptyset = J_0 \subsetneq J_1 \subsetneq \ldots \subsetneq J_r = S$ and $\sigma\in W$, let us write the unique decomposition $\sigma = \sigma_1 \sigma_2\cdots \sigma_r$, so that $\sigma_i\in {}^{J_{i-1}, J_i} W$, where ${}^{J_{i-1}, J_i} W \subset W_{J_{i}}$ denotes the set of minimal length representatives of the coset space $W_{J_{i-1}} \setminus W_{J_i}$.

\begin{corollary}
For all $\sigma,\omega\in W$ with $\sigma$ decomposed as above, we have
\[
  R_{\sigma,\omega} = \sum_{(\kappa_1,\ldots,\kappa_{r-1})}  R_{\sigma_1, \kappa_1}   \left( \prod_{j=1}^{r-2}  R_{\kappa_{j} \sigma_{j+1}, \kappa_{j+1}, J_{j}} \right) R_{\kappa_{r-1} \sigma_{r}, \omega, J_{r-1}}\;,
\]
where the summation is over all tuples with $\kappa_i\in W_{J_i}$, $i=1,\ldots,r-1$.

\end{corollary}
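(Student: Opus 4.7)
The plan is to prove this by iterating \Cref{prop:recJR} along the descending chain $J_{r-1} \supsetneq J_{r-2} \supsetneq \ldots \supsetneq J_0 = \emptyset$, peeling off one factor from the right at each step. I would argue by induction on $r$, taking as base case $r=2$ (chain $\emptyset \subsetneq J_1 \subsetneq S$), where the stated formula reduces exactly to \Cref{prop:recJR} applied with $J = J_1$ and the product $\prod_{j=1}^{0}$ interpreted as empty.

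For the inductive step, I would apply \Cref{prop:recJR} in $W = W_{J_r}$ with $J = J_{r-1}$. The chain decomposition of $\sigma$ reads $\sigma_{J_{r-1}} = \sigma_1 \cdots \sigma_{r-1}$ and ${}^{J_{r-1}}\sigma = \sigma_r$, giving
\[
R_{\sigma, \omega} \;=\; \sum_{\kappa_{r-1} \in W_{J_{r-1}}} R_{\sigma_1 \cdots \sigma_{r-1},\, \kappa_{r-1}} \cdot R_{\kappa_{r-1}\sigma_r,\, \omega,\, J_{r-1}}.
\]
The first factor is an ordinary $R$-polynomial between elements of $W_{J_{r-1}}$, whose chain decomposition in $W_{J_{r-1}}$ is again $\sigma_1 \cdots \sigma_{r-1}$. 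The inductive hypothesis applied to $W_{J_{r-1}}$ and the truncated chain $\emptyset = J_0 \subsetneq J_1 \subsetneq \ldots \subsetneq J_{r-1}$ expands $R_{\sigma_1 \cdots \sigma_{r-1}, \kappa_{r-1}}$ as a sum over $(\kappa_1, \ldots, \kappa_{r-2})$ of $R_{\sigma_1,\kappa_1} \prod_{j=1}^{r-3} R_{\kappa_j \sigma_{j+1}, \kappa_{j+1}, J_j} \cdot R_{\kappa_{r-2}\sigma_{r-1}, \kappa_{r-1}, J_{r-2}}$. Substituting, and absorbing the last factor into the product by extending the range to $j = 1, \ldots, r-2$, produces the stated identity.

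The main obstacle --- really a subtlety to be acknowledged cleanly --- is that the inductive step requires reinterpreting $R_{\sigma_1 \cdots \sigma_{r-1}, \kappa_{r-1}}$ as an $R$-polynomial attached to the smaller Coxeter system $(W_{J_{r-1}}, J_{r-1})$. This is the classical stability of $R$-polynomials under parabolic embedding: for $\tau_1, \tau_2 \in W_{J_i}$, the value $\check{R}_{\tau_1, \tau_2}$ computed in $\mathcal{H}(W)$ coincides with the one computed in $\mathcal{H}(W_{J_i})$. I would justify it by noting that the canonical inclusion $\mathcal{H}(W_{J_i}) \hookrightarrow \mathcal{H}(W)$ embeds the standard basis of the subalgebra as a subset of that of the ambient algebra, preserves inverses of $h_s$ for $s \in J_i$, and thus commutes with the involution $d$; consequently the pairing $\langle d^\ast(h^{\tau_1}), h_{\tau_2}\rangle$ is intrinsic to $\mathcal{H}(W_{J_i})$. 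Once this compatibility is in hand, the recursion unfolds cleanly with no further arithmetic obstacles.
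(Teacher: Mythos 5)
Your proposal is correct and follows what the paper intends: the paper gives no explicit proof, only the phrase ``streamlining the above identity through a nested chain of parabolic subgroups,'' which is exactly the iteration of \Cref{prop:recJR} along the chain that you carry out. You are also right to flag the parabolic-stability subtlety (that an $R$-polynomial or $J_j$-relative $R$-polynomial with both arguments in $W_{J_{j+1}}$ is the same whether computed in $\mathcal{H}(W)$ or in $\mathcal{H}(W_{J_{j+1}})$); this is used silently in the paper already in the proof of \Cref{lem:2}, and your justification via the canonical embedding $\mathcal{H}(W_{J_i})\hookrightarrow\mathcal{H}(W)$ preserving $d$ is the right one, though you should note it applies equally to the $J_j$-relative polynomials (via $f_{\omega,\omega_0^{J_j}}\in\mathcal{H}(W_{J_{j+1}})$ for $\omega\in W_{J_{j+1}}$), not only the ordinary $R$-polynomial $R_{\sigma_1\cdots\sigma_{r-1},\kappa_{r-1}}$.
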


As a follow-up of \Cref{rem:1}, the above corollary adds transparency to the approach that Kazhdan-Lusztig polynomials $P_{\sigma,\omega}$ (or their analogs $R_{\sigma,\omega}$) may be produced recursively out of the values of the polynomials $R_{\kappa^1_{J_i}, \kappa^2_{J_i}, J_{i-1}}$, for $\kappa^1,\kappa^2\in W$ such that $\sigma \leq \kappa^1  < \kappa^2  \leq \omega $ and $1\leq i \leq r$.

\section{The case of $S_n$}\label{sect:Sn}

In this section we fix the prototypical Coxeter system $(W, S)$, with $W=S_n$ being viewed as the group of permutations on $n$ indices $\{1,\ldots,n\}$, and its Coxeter generators are the simple transpositions
\[
S = \{s_i = (i,\, i+1)\}_{i=1}^{n-1}\subset S_n\;.
\]

Here, we adopt the common notation of $(i,j)\in S_n$ denoting the permutation that transposes given indices $1\leq i\neq j\leq n$ and fixes all other indices.

We also fix $J = \{s_1,\ldots, s_{n-2}\}\subset S$, which gives the parabolic subgroup $W_J <S_n$ of permutations that have the index $n$ as a fixed point. Naturally, the group $W_J$ is identified with the group $S_{n-1}$ of permutations on $\{1,\ldots,n-1\}$.

Our aim is to show that the $J$-relative $R$-polynomials in this case have an appealing description with implications towards the combinatorial invariance conjecture for Kazhdan-Lusztig polynomials.


Let us first introduce some further notation endemic to the case of permutation groups.

We naturally treat elements in $S_n$ as functions $\{1,\ldots,n\}\to \{1,\ldots,n\}$.

For a subset of indices $A = \{i_1 < \ldots < i_k\}\subset \{1,\ldots,n\}$ and a permutation $\sigma\in S_n$, the restriction $\sigma|_A \in S_k$ is defined to be the permutation satisfying $\sigma|_A(t) < \sigma|_A(s)$, if and only if, $\sigma(i_t) <\sigma(i_s)$ holds.

The following proposition is a well-known fact that was referred to as the \textit{subword condition} in \cite[Proposition 5.6]{BBDVW21}.

\begin{proposition}\label{prop:res-isom}
  For a choice of an index set $A\subset \{1,\ldots, n\}$ and of a permutation $\tau\in S_{|A|}$, let us denote
\[
\mathcal{I}_{A,\tau} = \{\sigma \in S_n \;:\; \sigma|_A = \tau\}\subset S_n\;.
\]

Then, the bijection $\sigma \mapsto \sigma|_{\{1,\ldots,n\}\setminus A}$ from $\mathcal{I}_{A,\tau}$ to $S_{n-|A|}$ is an isomorphism of partially ordered sets, relative to Bruhat orders on $S_n$ and $S_{n-|A|}$.

\end{proposition}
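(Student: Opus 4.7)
The statement combines a set-theoretic bijection with a Bruhat order isomorphism, both being standard consequences of pattern-based characterizations of the Bruhat order on $S_n$. The bijection follows from the observation that the pair of patterns $(\tau, \sigma|_{\{1,\ldots,n\}\setminus A})$ assembles uniquely into $\sigma$ under the paper's conventions, giving an explicit inverse to the restriction map.

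For the order equivalence, I would invoke the tableau criterion of Ehresmann: for $v, w \in S_n$, the relation $v \leq w$ holds if and only if, for every $i \in \{1,\ldots,n\}$, the sorted initial segment of values $(v(j))_{j \leq i}^{\uparrow}$ is componentwise $\leq (w(j))_{j \leq i}^{\uparrow}$. For any two permutations $\sigma_1, \sigma_2 \in \mathcal{I}_{A, \tau}$, the contributions of positions in $A$ to each sorted initial segment are fixed by $\tau$ alone, so the Ehresmann comparisons decouple into an $A$-part (automatically satisfied) and a complementary part that coincides exactly with the Ehresmann criterion for the restrictions $\sigma_1|_{\{1,\ldots,n\}\setminus A}$ and $\sigma_2|_{\{1,\ldots,n\}\setminus A}$ in $S_{n-|A|}$.

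The principal technical hurdle is the combinatorial bookkeeping of this decoupling: one must verify that the $A$-contributions to the sorted initial segments depend only on $\tau$ and on $A \cap \{1,\ldots,i\}$, and that the complementary contributions align cleanly with the sorted segments of the restricted permutations. Once this decoupling is established, both directions of the order equivalence follow, yielding the proposition.
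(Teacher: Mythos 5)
The paper itself states this proposition without proof, citing \cite[Proposition~5.6]{BBDVW21} as the \emph{subword condition}, so there is no in-paper argument to compare against. Your proposal, however, contains a genuine gap in the bijectivity step, one that also exposes an imprecision in the proposition as printed.

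You assert that the pair $(\tau, \sigma|_{\{1,\ldots,n\}\setminus A})$ determines $\sigma$ uniquely, supplying an explicit inverse to the restriction map. That is false: the pair of \emph{patterns} does not record which set of values $\sigma$ assigns to the positions in $A$. For instance, with $n=3$, $A=\{1,2\}$, $\tau=e\in S_2$, the set $\mathcal{I}_{A,\tau}$ consists of the three permutations $123$, $132$, $231$ in one-line notation, whereas $S_{n-|A|}=S_1$ is a singleton, so the map is $3$-to-$1$. The statement one actually needs (and the one used in the applications to \Cref{prop:cond} and \Cref{lem:supr}) additionally fixes the value set $\sigma(A)$ --- equivalently, the genuine restriction of $\sigma$ as a function on $A$ rather than merely its pattern. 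Only then is the restriction to the complement a bijection, and only then is your ``$A$-part automatically satisfied'' claim correct: otherwise the $A$-contributions to the sorted initial segments vary across $\mathcal{I}_{A,\tau}$ and the Ehresmann comparisons do not decouple at all. In the paper's uses, the permutations being compared differ by left multiplication by an element supported on $A\cup\{n\}$, which after inversion forces both agreement on the complement and a common value set on $A\cup\{n\}$, so the corrected hypothesis holds there automatically.

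Granting the corrected statement, your Ehresmann route is sound in outline, but the decoupling is not a formality. What it requires is a merge lemma: for a fixed finite set of integers $B_i$ and equal-size sets $V_i, W_i$ disjoint from $B_i$, the sorted sequence of $B_i\cup V_i$ is entrywise $\leq$ that of $B_i\cup W_i$ if and only if the sorted sequence of $V_i$ is entrywise $\leq$ that of $W_i$. Both directions follow by comparing the counts $|\{y\in B_i\cup V_i: y\leq x\}|$ and $|\{y\in B_i\cup W_i: y\leq x\}|$, but the reverse implication --- needed to conclude the map is an order \emph{embedding} and not merely order-preserving --- is the less obvious one and should be written out rather than deferred as bookkeeping.
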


For flexibility, let us denote $\omega_{0,k}\in S_n$ the permutation taking $i$ to $k+1-i$, for $1\leq i\leq k$ and leaving all indices $k<i$ fixed. In particular, $\omega_{0,n} = \omega_0 \in S_n$ is the longest permutation, while $\omega_{0,n-1}=\omega_0^J\in W_J$ is the longest element in the parabolic subgroup.

\subsection{Computation of $R_{\sigma,\omega,J}$}

For $1\leq i < n$, we write $t_i = (i, n) \in T$. Note, that $t_{n-1} ,t_{n-2},\ldots, t_1$ becomes a $(\omega_0^J, \omega_0)$-convex tuple that we now fix.

\begin{proposition}\label{prop:cond}
Let $\sigma \in S_n$ be a permutation, and $A = \{i_1< \ldots < i_k\}\subset \{1,\ldots, n-1\}$ a set of indices. The following are equivalent conditions:
\begin{enumerate}
  \item\label{it-1} The inequalities
  \[
   \sigma < t_{i_1}\sigma < t_{i_2} t_{i_1} \sigma< \ldots  < t_{i_k} t_{i_{k-1}}\cdot\ldots\cdot t_{i_1} \sigma
  \]
  hold in the Bruhat order of $S_n$.
  \item\label{it-2} The equality $\omega_{0,k} = (\sigma^{-1})|_{A\cup \{n\}}$ holds, as permutations in $S_{k+1}$.
  \item\label{it-3} For each $i\in A$, $\sigma < t_i\sigma$ holds, and the permutations $\{t_i\sigma\}_{i\in A}$ are pairwise incomparable in the Bruhat order of $S_n$.
\end{enumerate}

\end{proposition}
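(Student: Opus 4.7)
The plan is to establish (1) $\Leftrightarrow$ (2) by tracking how the value $n$ propagates through the chain of transpositions, and then (2) $\Leftrightarrow$ (3) via the rank-matrix criterion for the Bruhat order on $S_n$. Throughout, I would use the elementary facts that left multiplication of a permutation $\pi$ by $t_a = (a,n)$ swaps the values $a$ and $n$ in one-line notation, and that $\pi < t_a\pi$ precisely when $\pi^{-1}(a) < \pi^{-1}(n)$.

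For (1) $\Leftrightarrow$ (2) I would observe by induction on $j$ that in the partial product $t_{i_j}\cdots t_{i_1}\sigma$ the value $n$ occupies position $\sigma^{-1}(i_j)$, while every $i_{j'}$ with $j'>j$ remains undisturbed at position $\sigma^{-1}(i_{j'})$. Consequently the ascent at the $(j+1)$-th step of the chain in (1) is equivalent to $\sigma^{-1}(i_{j+1}) < \sigma^{-1}(i_j)$. Combining all steps, (1) collapses to the single chain $\sigma^{-1}(i_k) < \cdots < \sigma^{-1}(i_1) < \sigma^{-1}(n)$, which is precisely the decoding of $\omega_{0,k} = (\sigma^{-1})|_{A\cup\{n\}}$ in (2).

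For (2) $\Leftrightarrow$ (3) I would invoke the criterion that $u \leq v$ in $S_n$ iff $r_u(a,b) \geq r_v(a,b)$ for all $(a,b)$, where $r_w(a,b) := |\{k \leq a : w(k) \leq b\}|$. For $i_j < i_{j'}$ in $A$, the permutations $u = t_{i_j}\sigma$ and $v = t_{i_{j'}}\sigma$ differ from $\sigma$ only at the three distinguished positions $p := \sigma^{-1}(i_{j'})$, $q := \sigma^{-1}(i_j)$, $r := \sigma^{-1}(n)$, so $r_u - r_v$ is a tractable step function of $(a,b)$. Assuming (2), hence $p < q < r$, evaluation at $(q,i_j)$ and $(p,i_{j'})$ gives $r_u - r_v = -1$ and $+1$ respectively, which yields mutual incomparability and hence (3). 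For the contrapositive $\neg(2) \Rightarrow \neg(3)$, I would assume the ascent part of (3) and select $i < i'$ in $A$ with $\sigma^{-1}(i) < \sigma^{-1}(i') < \sigma^{-1}(n)$; the same tabulation with positions reordered yields $r_{t_i\sigma} \leq r_{t_{i'}\sigma}$ uniformly, so $t_{i'}\sigma \leq t_i\sigma$ and (3) fails.

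The only technical hurdle I anticipate is the careful signed bookkeeping of $r_u - r_v$ across the three distinguished positions and three distinguished values; once the indicators $[n \leq b]$, $[i_{j'} \leq b]$, $[i_j \leq b]$ are arranged into a small piecewise table in $(a,b)$, both directions of (2) $\Leftrightarrow$ (3) reduce to reading off specific entries, so the argument is essentially a short computation.
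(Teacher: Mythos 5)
Your proof is correct and takes a genuinely more elementary route than the paper's. The paper establishes (2)$\Rightarrow$(1) and (2)$\Rightarrow$(3) by appealing to the subword-pattern isomorphism of Proposition~\ref{prop:res-isom}, handles (3)$\Rightarrow$(2) by restricting to three-element patterns and checking the handful of cases in $S_3$, and proves (1)$\Rightarrow$(2) by induction on $|A|$ using the observation $\omega^{-1}(n)=\sigma^{-1}(i_{k-1})$. Your argument sidesteps Proposition~\ref{prop:res-isom} entirely: for (1)$\Leftrightarrow$(2), the single bookkeeping observation that in each prefix product $t_{i_j}\cdots t_{i_1}\sigma$ the value $n$ sits at position $\sigma^{-1}(i_j)$ (with $i_{j'}$ undisturbed for $j'>j$) collapses the whole ascent chain at once into $\sigma^{-1}(i_k)<\cdots<\sigma^{-1}(i_1)<\sigma^{-1}(n)$, giving both directions simultaneously rather than one by induction and one by pattern transfer; for (2)$\Leftrightarrow$(3), the rank-matrix criterion supplies the incomparability via the two signed entries you identify, and in the contrapositive direction the uniform inequality $r_{t_i\sigma}\le r_{t_{i'}\sigma}$ forces $t_{i'}\sigma < t_i\sigma$, again from first principles. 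The paper's route buys brevity by re-using a tool already cited from \cite{BBDVW21}; yours is self-contained and makes the underlying combinatorics explicit, which is a fair trade here.
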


\begin{proof}

When assuming condition \eqref{it-2}, the statements of \eqref{it-1} and \eqref{it-3} follow easily from Proposition \ref{prop:res-isom}.

Now, let us assume condition \eqref{it-3}. For $i,j\in A$ with $i<j$, let us consider the permutation $\tau = (\sigma^{-1})|_{\{i,j,n\}}\in S_3$. It follows that $\tau < \tau \cdot (1,3)$ and $\tau < \tau \cdot (2,3)$ hold, and that the pair $\tau \cdot(1,3) , \tau\cdot(2,3)$ is incomparable in the Bruhat order of $S_3$. Examining a small number of possibilities, we arrive to the conclusion that $\tau = (213)$. In other words, $\sigma^{-1}(j) < \sigma^{-1}(i) < \sigma^{-1}(n)$.
Observing all possible pairs in $A$, we obtain that $\sigma^{-1}(i_k) < \ldots < \sigma^{-1}(i_1) < \sigma^{-1}(n)$ holds, which amounts to condition \eqref{it-2}.

We are left with proving that \eqref{it-1} implies \eqref{it-2}. We show it by induction on $k$, the size of $A$.

By the induction hypothesis, we know that $(\sigma^{-1})|_{\{i_1,\ldots,i_{k-1},n\} }= \omega_{0,k-1}$. It remains to show that $\sigma^{-1}(i_k) < \sigma^{-1}(i_{k-1})$.

Indeed, noting that $\omega: = t_{i_{k-1}}\cdot\ldots\cdot t_{i_1} \sigma$ must satisfy $\omega^{-1}(n) = \sigma^{-1}(i_{k-1})$ and that $\omega < t_{i_k}\omega$ is assumed, the needed inequality follows.

\end{proof}

For a permutation $\sigma\in S_n$, let us write $\mathcal{S}(\sigma)$ for the collection of subsets of $\{1,\ldots,n-1\}$ that satisfy the equivalent conditions of Proposition \ref{prop:cond} relative to $\sigma$.

For $A =\{i_1< \ldots < i_k\}\in \mathcal{S}(\sigma)$, we write
\[
\sigma^A = t_{i_k} t_{i_{k-1}}\cdot\ldots\cdot t_{i_1} \sigma = (i_1, \ldots, i_k, n)\sigma \in S_n\;,
\]
where the last expression employs the cycle notation for permutations.

Taking the convention that $\emptyset\in \mathcal{S}(\sigma)$, we set $\sigma^{\emptyset} =\sigma$.

\begin{proposition}\label{prop:Runnorm}
  For all $\sigma,\omega\in S_n$, the formula
  \[
  \check{R}_{\sigma,\omega,J}  =  \left\{\begin{array}{ll} \alpha^{|A|}  & \mbox{if exists }A\in \mathcal{S}(\sigma),\mbox{ such that }\omega = \sigma^A \;.\\ 0 & \mbox{otherwise} \end{array}\right.
  \]
  holds.
\end{proposition}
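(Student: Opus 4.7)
The plan is to unwind the definition of $\check{R}_{\sigma,\omega,J}$ into a pairing to which Proposition~\ref{prop:Rformula} may be applied, then translate the resulting combinatorial sum via Proposition~\ref{prop:cond}. Combining
\[
\check{R}_{\sigma,\omega,J} = \langle f^{\sigma,\omega_0\omega_0^J},\, h_\omega\rangle
\]
with the identities $d(h_\omega)=f_{\omega,\omega_0}$ from~\eqref{eq:invd} and $d^\ast(f^{\sigma,\omega_0^J})=f^{\sigma,\omega_0\omega_0^J}$ from~\eqref{eq:omega0}, I rewrite
\[
\check{R}_{\sigma,\omega,J} = d\bigl(\langle f^{\sigma,\omega_0^J},\, f_{\omega,\omega_0}\rangle\bigr).
\]
Proposition~\ref{prop:Rformula}, applied to the fixed $(\omega_0^J,\omega_0)$-convex tuple $(t_{n-1},t_{n-2},\ldots,t_1)$, then expresses the inner pairing as a signed sum over sequences $\rho=(p_1<\ldots<p_m)$ of positions, where position $p$ corresponds to the reflection $t_{n-p}$.

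The bookkeeping step is to rewrite these position sequences as subsets of $\{1,\ldots,n-1\}$. Setting $i_r := n-p_{m+1-r}$ produces an increasing subset $A=\{i_1<\ldots<i_m\}$, and the chain condition in Proposition~\ref{prop:Rformula} rearranges into
\[
\sigma < t_{i_1}\sigma < t_{i_2}t_{i_1}\sigma < \ldots < t_{i_m}\cdots t_{i_1}\sigma = \omega,
\]
which is precisely condition~\eqref{it-1} of Proposition~\ref{prop:cond}. By the equivalence \eqref{it-1}$\Leftrightarrow$\eqref{it-2}, these are exactly the subsets $A\in\mathcal{S}(\sigma)$; a short induction identifies $t_{i_m}\cdots t_{i_1}$ with the cycle $(i_1,\ldots,i_m,n)$, so the chain terminates at $\sigma^A$ as required.

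What remains is to show that at most one $A\in\mathcal{S}(\sigma)$ satisfies $\sigma^A=\omega$: indeed $\omega\sigma^{-1}=(i_1,\ldots,i_m,n)$ is then a single cycle whose support is $A\cup\{n\}$, and its cyclic order read off starting from $n$ uniquely determines the elements of $A$ in increasing order. Consequently the sum of Proposition~\ref{prop:Rformula} collapses to the single term $(-\alpha)^{|A|}$ when such an $A$ exists and is $0$ otherwise, with the case $\sigma=\omega$, $A=\emptyset$ falling under the tail clause of that proposition. Applying $d$ together with $d(\alpha)=-\alpha$ then yields $\check{R}_{\sigma,\omega,J}=\alpha^{|A|}$. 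I expect the main obstacle to be the indexing translation itself---in particular, reconciling the chain order of Proposition~\ref{prop:Rformula} (in which the reflection of largest index is applied to $\sigma$ first) with that of Proposition~\ref{prop:cond}\eqref{it-1} (smallest index first)---which requires the reindexing above together with the explicit identification of $t_{i_m}\cdots t_{i_1}$ as the cycle $(i_1,\ldots,i_m,n)$.
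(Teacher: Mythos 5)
Your proof is correct and follows the same route as the paper: rewrite $\check{R}_{\sigma,\omega,J}$ as $d\bigl(\langle f^{\sigma,\omega_0^J}, f_{\omega,\omega_0}\rangle\bigr)$, apply Proposition~\ref{prop:Rformula} with the fixed $(\omega_0^J,\omega_0)$-convex tuple $(t_{n-1},\ldots,t_1)$, and match the resulting chain condition to the definition of $\mathcal{S}(\sigma)$ via Proposition~\ref{prop:cond}. The paper leaves the re-indexing between positions in the convex tuple and subsets $A\subset\{1,\ldots,n-1\}$, the identification $t_{i_m}\cdots t_{i_1}=(i_1,\ldots,i_m,n)$, and the uniqueness of $A$ with $\sigma^A=\omega$ implicit; you spell these out carefully, which is a sound elaboration rather than a different argument.
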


\begin{proof}
Since $\check{R}_{\sigma,\omega,J} =  d( \langle f^{\sigma,\omega_0^J}, f_{\omega,\omega_0}\rangle )$, $d(\alpha) = -\alpha$ and $t_{n-1},\ldots, t_1$ is a $(\omega_0^J,\omega_0)$-convex tuple, the formula follows directly from Proposition \ref{prop:Rformula} and the definition of $\mathcal{S}(\sigma)$.

\end{proof}

Curiously, the normalized polynomials $R_{\sigma,\omega,J}$ possess their own natural description that avoids the need for a direct computation of the normalization constants $\ell(\sigma)-\ell(\sigma^A)$, for a given $A\in \mathcal{S}(\sigma)$.

To that aim we need a finer analysis of the combinatorial situation. For a fixed $\sigma\in S_n$, let us write $\mathcal{D}(\sigma)\subset \{1,\ldots, n-1\}$ for the set of indices $i$, that satisfy $\sigma < t_i\sigma$ (or, equivalently, $\sigma^{-1}(i) < \sigma^{-1}(n)$).

Let $\mathcal{P}(\sigma)$ denote the partially ordered set of all $2^{|\mathcal{D}(\sigma)|}$ subsets of $\mathcal{D}(\sigma)$.

We define a pair of inclusion preserving maps
\[
A  \mapsto  A^\sigma\;,\quad B  \mapsto  B_\sigma\;,
\]
from $\mathcal{P}(\sigma)$ to itself, as follows.

For $A\in \mathcal{P}(\sigma)$, we set
\[
A\subset A^\sigma = \{1\leq j< n \;:\; \exists i\in A,\; i\leq j,\; \sigma^{-1}(i)\leq \sigma^{-1}(j)\leq \sigma^{-1}(n)\}\;.
\]

Given $B\in \mathcal{P}(\sigma)$, we define $B_\sigma \subset B$ using (a slightly twisted version of) the algorithm described in \cite[Section 5.1]{BBDVW21}.

The index $i_1\in B$ is set to be the minimal index with $\sigma^{-1}(i_1) < \sigma^{-1}(n)$. Subsequently, $i_2\in B$ is set to be the minimal index larger than $i_1$, for which $\sigma^{-1}(i_2)< \sigma^{-1}(i_1)$. Proceeding inductively, we obtain $B_\sigma := \{i_1<\ldots< i_k\}$.

The following lemma is straightforward to verify.
\begin{lemma}
For all $A\in \mathcal{P}(\sigma)$, the inclusions $A\subset (A_\sigma)^\sigma$, $(A^\sigma)_\sigma\subset A$ and the equalities $(A_\sigma)_\sigma = A_\sigma$, $(A^\sigma)^\sigma = A^\sigma$ hold.

In particular, the pair of maps $A\mapsto A^\sigma$ and $B\mapsto B_\sigma$ form a Galois connection between the poset $\mathcal{P}(\sigma)$ to itself.

The image of the map $B\mapsto B_\sigma$ is precisely $\mathcal{S}(\sigma)$.
\end{lemma}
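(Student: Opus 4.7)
The plan is to verify each claim by a careful combinatorial analysis of the two defining algorithms. Two of the assertions fall out almost immediately. The equality $(A^\sigma)^\sigma = A^\sigma$ is a transitivity statement: if $j' \in (A^\sigma)^\sigma$ via some $j \in A^\sigma$, which in turn arose from some $i \in A$, chaining the inequalities $i \le j \le j'$ and $\sigma^{-1}(i)\le \sigma^{-1}(j)\le \sigma^{-1}(j') \le \sigma^{-1}(n)$ already exhibits $j' \in A^\sigma$. The $_\sigma$-idempotence and image characterization are then handled together by the single observation that the greedy algorithm outputs a sequence $\{i_1<\ldots< i_k\}$ with strictly decreasing $\sigma^{-1}$-values, hence satisfying the conditions of \Cref{prop:cond}(2); conversely, applied to any $B\in \mathcal{S}(\sigma)$ the algorithm re-selects each of its elements in turn, so $B_\sigma = B$.

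For the unit inclusion $A \subset (A_\sigma)^\sigma$, I would fix $j \in A$ and write $A_\sigma = \{i_1 < \ldots < i_k\}$. Letting $t$ be maximal with $i_t \le j$ (this exists because $i_1 = \min A \le j$), either $j = i_t$, so $j \in A_\sigma$ trivially witnesses itself, or the algorithm bypassed $j$ at step $t+1$ precisely because $\sigma^{-1}(j) \ge \sigma^{-1}(i_t)$; in either case, $i_t$ satisfies $i_t\le j$ and $\sigma^{-1}(i_t)\le \sigma^{-1}(j)\le \sigma^{-1}(n)$, placing $j$ in $(A_\sigma)^\sigma$.

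The counit inclusion $(A^\sigma)_\sigma \subset A$ is what I expect to carry the main technical weight. I would prove by induction on $t$ that every index $i_t$ produced by the greedy algorithm on $A^\sigma$ already belongs to $A$. The base case $t=1$ uses the minimality of $i_1=\min A^\sigma$: any witness $i\in A$ to $i_1\in A^\sigma$ satisfies $i\le i_1$ and lies in $A^\sigma$, forcing $i=i_1\in A$. For the inductive step, assume $i_1,\ldots,i_{t-1}\in A$ and suppose $i_t\notin A$; then a witness $i\in A$ to $i_t\in A^\sigma$ must satisfy $i<i_t$ with $\sigma^{-1}(i)\le \sigma^{-1}(i_t) < \sigma^{-1}(i_{t-1})$. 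The case $i>i_{t-1}$ makes $i$ a candidate smaller than $i_t$ at step $t$, violating minimality. The subtler case $i\le i_{t-1}$ requires locating the unique index $s$ with $i_s<i<i_{s+1}$; the equalities $i=i_j$ are excluded because $\sigma^{-1}(i)<\sigma^{-1}(i_{t-1})$ while the chain $\sigma^{-1}(i_1)>\sigma^{-1}(i_2)>\ldots>\sigma^{-1}(i_{t-1})$ forces $\sigma^{-1}(i_j)\ge \sigma^{-1}(i_{t-1})$, and the inequality $i<i_1$ is excluded by $i\in A^\sigma$ and the minimality of $i_1$. The same strict-decrease chain then gives $\sigma^{-1}(i) < \sigma^{-1}(i_s)$, so $i$ qualifies as a valid choice at step $s+1$ strictly smaller than $i_{s+1}$, contradicting the algorithm's minimal selection. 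With the unit and counit inclusions and the two idempotences in hand, the Galois connection statement is the formal abstraction: the composition $(\cdot)^\sigma\circ(\cdot)_\sigma$ is a closure operator and $(\cdot)_\sigma\circ(\cdot)^\sigma$ a kernel operator, realizing an order-theoretic correspondence between $\mathcal{S}(\sigma)$ and the fixed points of $(\cdot)^\sigma$.
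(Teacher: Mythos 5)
The paper declares this lemma ``straightforward to verify'' and gives no proof, so there is nothing to compare against; what you have produced is a correct and complete verification of all the claims. Each of the four identities is handled by a sound argument: the transitivity chain for $(\cdot)^\sigma$-idempotence, the observation that the greedy output is always $\sigma^{-1}$-decreasing (hence lands in $\mathcal{S}(\sigma)$) together with the fixed-point check on $\mathcal{S}(\sigma)$ for $(\cdot)_\sigma$-idempotence and the image statement, the ``largest $i_t\le j$'' witness for the unit, and the induction with the two cases ($i>i_{t-1}$ vs.\ $i\le i_{t-1}$) for the counit. One tacit point worth making explicit for a clean write-up: both maps do land in $\mathcal{P}(\sigma)$ (for $A^\sigma$ because the constraint $\sigma^{-1}(j)\le\sigma^{-1}(n)$ with $j<n$ forces $j\in\mathcal{D}(\sigma)$), and both are monotone; with the unit and counit inclusions this is exactly the data of a monotone Galois connection, $(\cdot)_\sigma$ being the left adjoint, as you say. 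Minor: in the counit argument the location index $s$ in the subtle case lies in $\{1,\ldots,t-2\}$ (the case $s=t-1$ is exactly the already-handled branch $i>i_{t-1}$), which matches what you use.
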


A description of the fibers of $B\mapsto B_\sigma$ now conveniently follows.

\begin{corollary}\label{lem:fiber}
For all $A\in \mathcal{S}(\sigma)$, we have
\[
\{B\in \mathcal{P}(\sigma)\;:\; B_\sigma = A\} = \{B\in \mathcal{P}(\sigma)\;:\; A\subset B \subset  A^\sigma\}\;.
\]
\end{corollary}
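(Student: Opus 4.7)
The plan is to recognize the claim as a standard consequence of the Galois connection structure exhibited in the preceding lemma, and reduce everything to the idempotency and containment inequalities stated there. The first step is to extract the explicit Galois adjunction relation: for all $A,B\in\mathcal{P}(\sigma)$, the equivalence
\[
B_\sigma\subseteq A\quad\Longleftrightarrow\quad B\subseteq A^\sigma
\]
holds. To verify the left-to-right direction, I apply the inclusion-preserving map $(\cdot)^\sigma$ to $B_\sigma\subseteq A$, obtaining $(B_\sigma)^\sigma\subseteq A^\sigma$, and then combine this with $B\subseteq (B_\sigma)^\sigma$ from the preceding lemma. The reverse direction is symmetric: apply $(\cdot)_\sigma$ to $B\subseteq A^\sigma$ and use $(A^\sigma)_\sigma\subseteq A$.

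Next, I would prove the forward inclusion of the claim. Assuming $B_\sigma=A$, the inclusion $A\subseteq B$ is immediate from the construction of $B_\sigma$, which is explicitly a subset of $B$. On the other hand, $B_\sigma=A$ trivially gives $B_\sigma\subseteq A$, so by the Galois axiom established above I obtain $B\subseteq A^\sigma$.

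For the reverse inclusion, I would assume $A\subseteq B\subseteq A^\sigma$ and apply the inclusion-preserving map $(\cdot)_\sigma$ to obtain $A_\sigma\subseteq B_\sigma\subseteq (A^\sigma)_\sigma$. The key observation is that since $A\in\mathcal{S}(\sigma)$ lies in the image of $(\cdot)_\sigma$ by the preceding lemma, we may write $A=C_\sigma$ for some $C$, and then the idempotency $(C_\sigma)_\sigma=C_\sigma$ yields $A_\sigma=A$. Combining this with the containment $(A^\sigma)_\sigma\subseteq A$ forces $A\subseteq B_\sigma\subseteq A$, i.e.\ $B_\sigma=A$.

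The proof is essentially formal once the Galois connection is recognized, and I do not expect any real obstacle; the main subtlety is simply to invoke the right combination of idempotency and the two containments $A\subseteq (A_\sigma)^\sigma$ and $(A^\sigma)_\sigma\subseteq A$ at the right moments, together with the definitional fact $B_\sigma\subseteq B$ which is needed to obtain the lower bound $A\subseteq B$ in the forward direction.
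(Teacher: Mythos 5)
Your proof is correct and takes exactly the approach the paper intends: the paper states this corollary with no written proof, remarking only that the fiber description ``conveniently follows'' from the Galois-connection lemma, and you have carried out the standard formal deduction. You correctly derive the adjunction equivalence $B_\sigma\subseteq A\iff B\subseteq A^\sigma$ from monotonicity together with the unit and counit inclusions, use $B_\sigma\subseteq B$ (which is part of the definition of $B_\sigma$) to get the lower bound in the forward direction, and in the reverse direction correctly identify that $A\in\mathcal{S}(\sigma)$ being in the image of $(\cdot)_\sigma$ combined with idempotency yields $A_\sigma=A$, which is the one non-routine step; everything else is sandwiching. No gaps.
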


On the other hand, the set $A^\sigma$, for $A\in \mathcal{S}(\sigma)$, naturally appears in our computations of interest.

\begin{lemma}\label{lem:length}
For any $A\in \mathcal{S}(\sigma)$, the length identity
\[
\ell(\sigma^A) - \ell(\sigma) =  2|A^\sigma| - |A|
\]
holds.
\end{lemma}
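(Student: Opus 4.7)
My plan is induction on $k := |A|$, with the base case $k=0$ holding trivially as both sides vanish. For the inductive step, set $A' := A \setminus \{i_k\} = \{i_1,\ldots,i_{k-1}\}$. The chain characterization (the first condition) in \Cref{prop:cond} gives $A' \in \mathcal{S}(\sigma)$ and $\sigma^A = t_{i_k}\sigma^{A'}$, with $\sigma^{A'} < \sigma^A$ in the Bruhat order.

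I will use the classical inversion-counting formula for left-multiplication by a reflection in the symmetric group:
\[
\ell(t_{i_k}\tau) - \ell(\tau) \;=\; 1 \;+\; 2\,\bigl|\{j : i_k < j < n,\ \tau^{-1}(i_k) < \tau^{-1}(j) < \tau^{-1}(n)\}\bigr|,
\]
valid for any $\tau \in S_n$ with $\tau^{-1}(i_k) < \tau^{-1}(n)$, applied with $\tau = \sigma^{A'}$. The explicit cycle $\sigma^{A'} = (i_1,\ldots,i_{k-1},n)\sigma$, together with the ordering $\sigma^{-1}(i_k) < \sigma^{-1}(i_{k-1}) < \ldots < \sigma^{-1}(i_1) < \sigma^{-1}(n)$ guaranteed by the second condition of \Cref{prop:cond}, yields $(\sigma^{A'})^{-1}(i_k) = \sigma^{-1}(i_k)$, $(\sigma^{A'})^{-1}(n) = \sigma^{-1}(i_{k-1})$, and $(\sigma^{A'})^{-1}(j) = \sigma^{-1}(j)$ for every $j$ with $i_k < j < n$ (such a $j$ being absent from the orbit $\{i_1,\ldots,i_k,n\}$). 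Hence the counting set collapses to
\[
m \;:=\; \bigl|\{j : i_k < j < n,\ \sigma^{-1}(i_k) < \sigma^{-1}(j) < \sigma^{-1}(i_{k-1})\}\bigr|.
\]

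Next I unfold $|A^\sigma \setminus A'^\sigma|$ directly from the definition of $A^\sigma$. An index $j \in A^\sigma \setminus A'^\sigma$ must be witnessed by $i_k$ alone among elements of $A$, forcing $i_k \leq j < n$ and $\sigma^{-1}(i_k) \leq \sigma^{-1}(j) \leq \sigma^{-1}(n)$, while no $i \in A'$ witnesses $j$. Since $j \geq i_k > i_s$ for every $s \leq k-1$, the non-witnessing requirement applies to all of $i_1,\ldots,i_{k-1}$ and, via the monotone chain above, collapses into the single inequality $\sigma^{-1}(j) < \sigma^{-1}(i_{k-1})$ (which in turn renders $\sigma^{-1}(j) < \sigma^{-1}(n)$ automatic). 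Separating the always-present index $j = i_k$ from the case $j > i_k$ (where $\sigma^{-1}(j) \neq \sigma^{-1}(i_k)$ forces the lower inequality to be strict) gives $|A^\sigma \setminus A'^\sigma| = 1 + m$.

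Combining with the inductive hypothesis $\ell(\sigma^{A'}) - \ell(\sigma) = 2|A'^\sigma| - (k-1)$,
\[
\ell(\sigma^A) - \ell(\sigma) = (1 + 2m) + \bigl(2|A'^\sigma| - (k-1)\bigr) = 2\bigl(|A'^\sigma| + m + 1\bigr) - k = 2|A^\sigma| - |A|,
\]
completing the induction. The chief bookkeeping obstacle is the parallel translation of $m$ and of $|A^\sigma \setminus A'^\sigma| - 1$ into identical expressions in $\sigma^{-1}$; both reduce to the same counting set once the chain of \Cref{prop:cond} is invoked to identify $\sigma^{-1}(i_{k-1})$ as the effective upper bound.
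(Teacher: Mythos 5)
Your proof is correct and follows essentially the same induction on $|A|$ as the paper's own argument: the paper also peels off $i_k$, introduces a set $C$ equal to your $m$-counting set, and uses the identities $\ell(t_{i_k}\sigma^{A'}) - \ell(\sigma^{A'}) = 2|C|+1$ and $A^\sigma = A'^\sigma \,\dot\cup\, C \,\dot\cup\, \{i_k\}$. You have simply made explicit the two ``it is easily seen'' steps (the inversion-count formula for left-multiplication by a reflection, and the description of $A^\sigma\setminus A'^\sigma$ via the monotone chain from \Cref{prop:cond}) that the paper leaves as exercises.
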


\begin{proof}
Let us fix $A = \{i_1< \ldots< i_k\}\in \mathcal{S}(\sigma)$ and write $\omega = \sigma^A\in S_k$. We also write $A_1 = \{i_1< \ldots < i_{t-1}\}\in \mathcal{S}(\sigma)$ and $\omega_1 =  \sigma^{A_1} = t_{i_k} \omega $.

Let us consider the set of indices
\[
C = \{i_{k}< i < n\;:\; \sigma^{-1}(i_k)<\sigma^{-1}(i) < \sigma^{-1}(i_{k-1})\}\;.
\]
Here, $i_0$ is taken as $n$, if $k=1$.

A simple study of permutations shows that $\ell(t_{i_k}\omega_1) - \ell(\omega_1) = 2 |C| +1$. On the other hand, we see that $A^\sigma = A_1^\sigma \dot{\cup} C \dot{\cup} \{i_k\}$. Thus, we may write
\[
\ell(\omega ) - \ell(\omega_1) = 2(|A^\sigma| - |A_1^\sigma|)-1\;.
\]
Reasoning by induction on the parameter $|A|$, the statement follows.
\end{proof}

Given $\sigma \in S_n$ and $B\in \mathcal{P}(\sigma)$, let us shortcut notation to $\sigma^B := \sigma^{B_\sigma}\in S_n$.

\begin{proposition}\label{prop:normRj}
  For all $\sigma,\omega\in S_n$, the formula
  \[
  R_{\sigma,\omega,J}  = \sum\limits_{B\in \mathcal{P}(\sigma)\;:\; \sigma^B = \omega} (q-1)^{|B|}
  \]
  holds.
\end{proposition}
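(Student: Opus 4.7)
The plan is to combine the unnormalized formula from \Cref{prop:Runnorm} with the length identity of \Cref{lem:length} and the fiber description of \Cref{lem:fiber}, converting a single monomial into the claimed binomial sum indexed over a fiber.

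First, I fix $\sigma,\omega\in S_n$. By \Cref{prop:Runnorm}, $\check{R}_{\sigma,\omega,J}$ vanishes unless there exists $A\in \mathcal{S}(\sigma)$ with $\sigma^A = \omega$, in which case $\check{R}_{\sigma,\omega,J}=\alpha^{|A|}$. Such an $A$ must be unique, as the product $\omega\sigma^{-1}$ equals the cycle $(i_1,i_2,\ldots,i_k,n)$, which pins down the indices in $A = \{i_1,\ldots,i_k\}$ (the requirement that $n$ appears at the end fixes the cyclic representative). If no such $A$ exists, then by \Cref{lem:fiber} no $B\in\mathcal{P}(\sigma)$ satisfies $\sigma^B = \sigma^{B_\sigma} = \omega$, so both sides of the target identity vanish. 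Hence I may assume such an $A$ exists.

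Second, I perform the normalization. The arithmetic key is $\alpha = v^{-1}-v = v(v^{-2}-1) = v(q-1)$. Combined with $\ell(\sigma)-\ell(\omega) = |A|-2|A^\sigma|$ from \Cref{lem:length}, this gives
\[
R_{\sigma,\omega,J} = v^{\ell(\sigma)-\ell(\omega)}\alpha^{|A|} = v^{|A|-2|A^\sigma|}\cdot v^{|A|}(q-1)^{|A|} = q^{|A^\sigma|-|A|}(q-1)^{|A|},
\]
using $v^{-2}=q$ in the last equality. In particular, this confirms a priori that $R_{\sigma,\omega,J}$ lies in $\mathbb{Z}[q]^+$.

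Third, I expand the target right-hand side. By \Cref{lem:fiber}, the set $\{B\in\mathcal{P}(\sigma)\;:\;\sigma^B=\omega\} = \{B\;:\;B_\sigma = A\}$ equals the interval $\{B\;:\;A\subset B\subset A^\sigma\}$. Setting $m = |A^\sigma \setminus A|$ and stratifying by the size $|B|=|A|+j$, the sum becomes
\[
\sum_{B\,:\,\sigma^B=\omega}(q-1)^{|B|} = (q-1)^{|A|}\sum_{j=0}^m\binom{m}{j}(q-1)^j = (q-1)^{|A|}q^m,
\]
by the binomial identity $((q-1)+1)^m = q^m$. This coincides with the expression for $R_{\sigma,\omega,J}$ obtained above, completing the proof. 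The argument carries essentially no obstacle beyond careful bookkeeping; the only delicate point is the uniqueness of $A$ given $(\sigma,\omega)$, which is settled immediately via the cycle representation of $\omega\sigma^{-1}$.
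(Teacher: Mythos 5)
Your proof is correct and uses the same three ingredients as the paper's proof (\Cref{prop:Runnorm}, \Cref{lem:length}, \Cref{lem:fiber}), with the same binomial-identity computation; you simply run the argument from $R_{\sigma,\omega,J}$ toward the sum rather than from the sum toward $R_{\sigma,\omega,J}$. A small added value of your write-up is the explicit verification that at most one $A\in\mathcal{S}(\sigma)$ can satisfy $\sigma^A=\omega$ (via the cycle representation of $\omega\sigma^{-1}$), a fact the paper uses implicitly when invoking \Cref{prop:Runnorm}.
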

\begin{proof}
By \Cref{lem:fiber}, for $A\in \mathcal{S}(\sigma)$,
\begin{equation*}
\begin{aligned}
	\sum\limits_{B\in \mathcal{P}(\sigma)\;:\; B_\sigma = A} (q-1)^{|B|}
	&=\sum\limits_{D\subset A^\sigma\setminus A} (q-1)^{|A|+|D|} \\
	&=(q-1)^{|A|} \sum_{r=0}^{|A^\sigma|- |A|} {|A^\sigma|- |A|  \choose r}  (q-1)^{r} \\
	&= (q-1)^{|A|} q^{|A^\sigma|- |A|}\;.
	\end{aligned}
\end{equation*}

Now, by \Cref{lem:length}, $(q-1)^{|A|} q^{|A^\sigma|- |A|} = v^{\ell(\sigma)-\ell(\sigma^A)}v^{|A|} (v^{-2}-1)^{|A|} =v^{\ell(\sigma)-\ell(\sigma^A)}\alpha^{|A|} $. The result now follows from the unnormalized formula in \Cref{prop:Runnorm}.

\end{proof}

\subsection{Hypercube decomposition}

\begin{lemma}\label{lem:supr}
For $\sigma\in S_n$ and non-empty $B\in \mathcal{P}(\sigma)$, $\sigma^B$ is the supremum (join) of the set $\{t_i \sigma\}_{i\in B}$ in the Bruhat order.
\end{lemma}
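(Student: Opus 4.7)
The plan is induction on $|B|$, with the trivial base case $|B|=1$, where $B_\sigma = B = \{i\}$ and $\sigma^B = t_i\sigma$.

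First, I would reduce to the case $B \in \mathcal{S}(\sigma)$. Given $B \in \mathcal{P}(\sigma)$, each $i \in B$ lies in $(B_\sigma)^\sigma$ by \Cref{lem:fiber}, so by definition there exists $i_0 \in B_\sigma$ with $i_0 \leq i$ and $\sigma^{-1}(i_0) \leq \sigma^{-1}(i) \leq \sigma^{-1}(n)$. A direct comparison in one-line notation then verifies $t_i\sigma \leq \sigma^{B_\sigma}$. Hence $\{t_i\sigma\}_{i \in B}$ and $\{t_i\sigma\}_{i \in B_\sigma}$ share the same collection of upper bounds, and it suffices to prove the lemma for $B = B_\sigma \in \mathcal{S}(\sigma)$.

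Assume now $B = \{i_1 < \ldots < i_k\} \in \mathcal{S}(\sigma)$, and set $p_l = \sigma^{-1}(i_l)$, $p_0 = \sigma^{-1}(n)$. By \Cref{prop:cond}\eqref{it-2} one has $p_k < p_{k-1} < \ldots < p_0$, with $\sigma$ taking values $(i_k, i_{k-1}, \ldots, i_1, n)$ and $\sigma^B$ values $(n, i_k, i_{k-1}, \ldots, i_1)$ at these positions. For the inequality $t_{i_j}\sigma \leq \sigma^B$, I would argue by induction on $k$ using the factorization $\sigma^B = t_{i_k}\sigma^{B \setminus \{i_k\}}$, or equivalently exhibit an explicit chain of covering relations realizing the inequality in one-line notation, which is straightforward since both permutations differ from $\sigma$ only on the positions $\{p_0, \ldots, p_k\}$.

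For the minimality direction, suppose $\omega \geq t_{i_j}\sigma$ for every $j$. I would invoke the tableau criterion for the Bruhat order on $S_n$: $\omega \geq \sigma^B$ is equivalent to $\#\{q \leq r : \omega(q) \geq v\} \geq \#\{q \leq r : \sigma^B(q) \geq v\}$ for all $r, v$. Outside the positions $\{p_0, \ldots, p_k\}$ these counts agree with those of $\sigma$, and within the remaining window each critical threshold $v \in \{i_1, \ldots, i_k, n\}$ is controlled by the hypothesis $\omega \geq t_{i_j}\sigma$ for the appropriate index $j$. The main obstacle is the bookkeeping in this last step: one must match each relevant (position, threshold) pair to the single $t_{i_j}\sigma$ witnessing the required inequality, and verify that no combinatorial slack is lost when aggregating these one-at-a-time bounds into the joint inequality $\omega \geq \sigma^B$.
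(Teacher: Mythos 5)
Your approach is correct but genuinely different from the paper's. The paper proves minimality by invoking \Cref{prop:res-isom}: all of $\sigma$, $t_i\sigma$ ($i\in B$), and $\sigma^B$ agree outside the positions $\sigma^{-1}(B\cup\{n\})$, so the poset isomorphism of that proposition reduces the whole question to $S_{k+1}$, where the supremum of the $k$ elements $\{\omega_{0,k}(j,k+1)\}_{j=1}^k$ is manifestly the top element $\omega_{0,k+1}$, and the condition $\kappa|_{\sigma^{-1}(B\cup\{n\})}=\omega_{0,k+1}$ then forces $\omega\geq\sigma^B$ for any upper bound $\omega$. Your route via the tableau criterion does the same comparison but directly in $S_n$, and the bookkeeping you flag as the main obstacle in fact closes cleanly: for $r$ in the range $p_m\leq r<p_{m-1}$ (with $1\leq m\leq k$), the set of values that $\sigma^B$ places at positions $\{p_k,\ldots,p_m\}$ is $\{n,i_{m+1},\ldots,i_k\}$, which is \emph{exactly} the set of values that $t_{i_m}\sigma$ places there; so for every threshold $v$ the single witness $t_{i_m}\sigma$ achieves the count of $\sigma^B$, and for $r<p_k$ or $r\geq p_0$ all counts agree trivially. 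Two smaller points. First, in the reduction step, the inequality you actually get for $i\in B\setminus B_\sigma$ from the definition of $(B_\sigma)^\sigma$ is $t_i\sigma\leq t_{i_0}\sigma$ (a three-point pattern comparison showing $132\leq 321$), not $t_i\sigma\leq\sigma^{B_\sigma}$ directly; the former is what you need, since asserting the latter in advance tacitly uses the supremum fact you are about to prove—so either state the $t_{i_0}\sigma$ bound, or reorganize to prove the $B\in\mathcal{S}(\sigma)$ case first. (The paper itself proves exactly $t_j\sigma<t_i\sigma$ via the $S_3$-pattern argument.) Second, your proposed induction on $|B|$ is not really used once the tableau-criterion argument is in place; the argument is uniform in $k$. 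What the paper's route buys is that the entire minimality direction becomes a one-line appeal to the subword criterion plus the trivial observation about $\omega_{0,k+1}$; what your route buys is a self-contained argument avoiding \Cref{prop:res-isom} entirely.
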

\begin{proof}
Let us first see that the supremum of the set $\{t_i \sigma\}_{i\in B}$ is achieved as the supremum of the possibly smaller set $\{t_i \sigma\}_{i\in B_\sigma}$.

Indeed, given $j\in B\setminus B_\sigma$, there must be $j> i\in B$ with $\sigma^{-1}(i) < \sigma^{-1}(j)< \sigma^{-1}(n)$. In other words, the restricted permutation $(\sigma^{-1})|_{\{i,j,n\}}$ is trivial in $S_3$. Since $(2,3)< (1,3)$, it follows that $t_j\sigma < t_i \sigma$.

Hence, we can assume that $B\in \mathcal{S}(\sigma)$, and by \Cref{prop:cond} we have $(\sigma^B)^{-1}|_{B\cup \{n\}} = \omega_{0,k+1}$.

It is easily verified that the Bruhat supremum of the set of permutations
\[
\{\omega_{0,k}(1,k+1),\omega_{0,k}(2,k+1)\ldots, \omega_{0,k}(k,k+1)\}
\]
in $S_{k+1}$ is given by the longest element $\omega_{0,k+1}$.

The statement follows from Proposition \ref{prop:res-isom}.

\end{proof}

The combination of \Cref{prop:normRj} and \Cref{lem:supr} now gives the gist of \Cref{thm:B}. Furthermore, if we substitute the hypercube formula for $J$-relative $R$-polynomials into the general expression from \Cref{prop:qR}, we obtain an algebraic proof and interpretation of the hypercube decomposition presented in \cite{BBDVW21}. The following expression that we record separately allows us to complete the proof of \Cref{thm:B}.

\begin{theorem}\label{thm:hyper}
  For all $\sigma, \omega\in S_n$ and $J = \{s_1,\ldots, s_{n-2}\}$, the equality
  \[
  Q^J_{\sigma,\omega} =  \frac{q^{\ell(\omega)-\ell(\sigma)}}{1-q}  \sum\limits_{\emptyset\neq B\in \mathcal{P}(\sigma)} (q^{-1}-1)^{|B|} d(P_{\sigma^B, \omega})\;,
    \]
holds, where $\sigma^B\in S_n$ may be taken as the supremum of the set of permutations $\{t_i \sigma\}_{i\in B}$ in the Bruhat order.
\end{theorem}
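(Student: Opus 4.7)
\medskip

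\noindent\textbf{Proof proposal.} The plan is to derive the equality by direct substitution, combining the three key ingredients already assembled in the paper: the formula of \Cref{prop:qR} expressing $Q^J_{\sigma,\omega}$ as a $d$-image of a sum involving $R_{\sigma,\kappa,J}$ and $P_{\kappa,\omega}$, the hypercube description of $R_{\sigma,\omega,J}$ from \Cref{prop:normRj}, and the supremum characterization of $\sigma^B$ from \Cref{lem:supr}.

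First, I would start with the identity
\[
Q^J_{\sigma,\omega} =  \frac{q^{\ell(\omega)-\ell(\sigma)}}{1-q} d\left( \sum_{\sigma < \kappa\leq \omega} R_{\sigma,\kappa,J} P_{\kappa, \omega}\right)
\]
of \Cref{prop:qR} and plug in the formula $R_{\sigma,\kappa,J} = \sum_{B\in \mathcal{P}(\sigma),\, \sigma^B=\kappa} (q-1)^{|B|}$ from \Cref{prop:normRj}. Interchanging the order of summation rewrites the inner sum as an iterated sum indexed by pairs $(B,\kappa)$ with $B\in\mathcal{P}(\sigma)$, $\sigma^B=\kappa$, and $\sigma<\kappa\leq\omega$. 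Since $\sigma^\emptyset=\sigma$ and $B\mapsto \sigma^B$ takes non-empty subsets to elements strictly greater than $\sigma$ (this follows from the chain property in \Cref{prop:cond}\eqref{it-1} applied to $B_\sigma$), the constraint $\sigma<\kappa$ is equivalent to $B\neq\emptyset$. Moreover, the constraint $\kappa\leq\omega$ can be dropped since $P_{\sigma^B,\omega}=0$ whenever $\sigma^B\not\leq\omega$. This collapses the sum to $\sum_{\emptyset\neq B\in\mathcal{P}(\sigma)} (q-1)^{|B|} P_{\sigma^B,\omega}$.

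Next, I would apply the ring involution $d$: since $d(q-1)=q^{-1}-1$ and $d$ is $\mathcal{L}$-linear, one obtains
\[
d\left(\sum_{\emptyset\neq B\in\mathcal{P}(\sigma)} (q-1)^{|B|} P_{\sigma^B,\omega}\right) = \sum_{\emptyset\neq B\in\mathcal{P}(\sigma)} (q^{-1}-1)^{|B|} d(P_{\sigma^B,\omega}),
\]
which immediately yields the displayed formula for $Q^J_{\sigma,\omega}$. Finally, the supremum interpretation of $\sigma^B$ is precisely the content of \Cref{lem:supr}, so invoking it closes the proof.

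I do not anticipate any substantial obstacle: the argument is essentially a bookkeeping exercise once the hypercube formula for $R_{\sigma,\omega,J}$ is in hand. The only delicate point to verify carefully is the matching between the indexing set $\{B\in\mathcal{P}(\sigma):\sigma^B=\kappa\}$ and the fiber description via $B\mapsto B_\sigma$, but this was already established in \Cref{lem:fiber} and silently used in the proof of \Cref{prop:normRj}. Everything else is bookkeeping with signs and exponents.
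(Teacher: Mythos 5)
Your proof is correct and follows exactly the route the paper intends: substitute the hypercube formula of \Cref{prop:normRj} into the recursion of \Cref{prop:qR}, reindex by $B$, and invoke \Cref{lem:supr} for the supremum interpretation of $\sigma^B$. One minor terminological slip: $d$ is \emph{not} $\mathcal{L}$-linear (it sends $v\mapsto v^{-1}$), but it is a ring involution, hence additive and multiplicative, which is all you use when distributing $d$ over the sum and pulling $d\bigl((q-1)^{|B|}\bigr)=(q^{-1}-1)^{|B|}$ out of each term; so the conclusion stands.
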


\section{On combinatorial invariance}\label{sect:cic}

In the pursuit of proving the Combinatorial Invariance Conjecture, the expression found in \Cref{thm:hyper} was introduced in \cite{BBDVW21} as a stepping stone, particularly for symmetric groups. To revisit this theme, we will first recapitulate the classical conjecture, and then explore the implications of our findings on the conjecture through the lens of $J$-relative $R$-polynomials.

In this section $(W,S)$ is a Coxeter system with a finite $W$, and $T\subset W$ as before is the conjugation closure of $S$.

For a pair $\sigma, \omega\in W$, the \textit{Bruhat interval} $G[\sigma,\omega]= (V,E)$ is considered as a directed graph whose set of vertices $V$ is given by all elements $z\in W$ with $\sigma\leq z\leq \omega$, while the edges may be defined as ordered pairs of vertices
\[
E = \{(z_1,z_2)\in V\times V\;:\; z_2z_1^{-1}\in T, \ell(z_1) < \ell( z_2)\}\;.
\]

Naturally, an isomorphism of directed graphs $(V_1,E_1)$ and $(V_2,E_2)$ is a bijection $\phi:V_1\to V_2$, so that for a pair $z, z'\in V_1$, we have $(z,z')\in E_1$, if and only if, $(\phi(z),\phi(z'))\in E_2$.

The following is the much-studied \textit{combinatorial invariance conjecture}, which is attributed to Lusztig since the early 80's and to the thesis of Dyer \cite{Dye87}.

\begin{conjecture}\label{conj:cic}
For any Coxeter systems $(W_1,S_1)$, $(W_2,S_2)$ and pairs $\sigma_1,\omega_1\in W_1$, $\sigma_2,\omega_2\in W_2$ with Bruhat intervals $G[\sigma_1,\omega_1], G[\sigma_2,\omega_2]$ that are isomorphic as directed graphs, an equality $P_{\sigma_1,\omega_1} = P_{\sigma_2,\omega_2}$ holds.
\end{conjecture}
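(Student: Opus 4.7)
The plan is to attempt \Cref{conj:cic} by a double induction: on the cardinality $|W|$ of the Coxeter group, and within each fixed system on the length gap $\ell(\omega)-\ell(\sigma)$. Base cases (singleton intervals) are immediate from $P_{\sigma,\sigma}=1$. A graph isomorphism $\phi:G[\sigma_1,\omega_1]\to G[\sigma_2,\omega_2]$ carries every induced sub-interval $G[\kappa,\omega_1]$ with $\sigma_1<\kappa\leq\omega_1$ to an isomorphic induced sub-interval, so the inner inductive hypothesis transfers across $\phi$ to the shorter Kazhdan--Lusztig polynomials that appear on the right-hand side of any recursion.

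Next, apply the parabolic decomposition $P^\partial_{\sigma,\omega}=Q^J_{\sigma,\omega}+I^J_{\sigma,\omega}$ of \Cref{thm:A} on both sides, for appropriately chosen parabolic subgroups $W_{J_i}<W_i$. The term $I^J_{\sigma,\omega}$ decomposes, via \Cref{cor:indu}, into a sum of products $P^\partial_{\sigma_J,\kappa}\,\gamma^J_{\kappa\cdot{}^J\sigma,\omega}$ indexed by $\kappa\in W_J$: the first factors live in the strictly smaller group $W_J$ and are controlled by the outer induction on $|W|$, while the Grojnowski--Haiman coefficients $\gamma^J_{\cdot,\cdot}$ are determined by the linear system of \Cref{prop:lin}, whose entries are values of $P_{\zeta,\omega}$ on sub-intervals and hence reachable by the inner induction. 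The remainder $Q^J_{\sigma,\omega}$ is expressed through \Cref{prop:qR} as a combination of $P_{\kappa,\omega}$ for $\kappa>\sigma$ (again within the inner inductive reach) and the $J$-relative $R$-polynomials $R_{\sigma,\kappa,J}$.

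The decisive obstacle therefore becomes the combinatorial invariance of the enhanced data $(G[\sigma,\omega],\,R_{\sigma,\cdot,J})$, which is precisely the content of the weaker conjectures \Cref{conj:R} and \Cref{conj:weak} formulated in the paper. To execute the plan uniformly one must, first, canonically reconstruct from the abstract isomorphism class of $G[\sigma,\omega]$ a distinguished extra structure — a coloring of edges by generators, or a marked sub-graph identifying the $W_J$-coset of $\sigma$ — that is automatically preserved by any graph isomorphism; and, second, show that $R_{\sigma,\kappa,J}$ is computable from this enhancement alone. For $W=S_n$ both steps succeed through the hypercube formula of \Cref{thm:B}, which is what makes \Cref{thm:sncic} possible.

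The hard part will be the canonical selection of a parabolic enhancement outside type $A$. The directed Bruhat graph carries no a priori preferred coset decomposition, and no closed-form analog of \Cref{thm:B} is currently known. Any solution will presumably need a genuinely graph-theoretic proxy for a reflection ordering — something in the spirit of the $(\omega_0^J,\omega_0)$-convex tuples underlying the definition of $R_{\sigma,\kappa,J}$ via \Cref{prop:Rformula} — that the isomorphism $\phi$ must respect. Producing such a proxy, or otherwise verifying \Cref{conj:R} and \Cref{conj:weak} in a type-free manner, would close the induction and settle \Cref{conj:cic}; short of this, the approach reduces \Cref{conj:cic} cleanly to the invariance of a single family of auxiliary polynomials, which seems the most tractable route suggested by the machinery developed above.
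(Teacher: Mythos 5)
You have correctly recognized that \Cref{conj:cic} is an open conjecture that the paper does not prove, so your proposal is (and can only be) a reduction strategy, not a proof. As such it tracks the paper's own framing faithfully: the parabolic machinery of \Cref{thm:A} (via \Cref{cor:indu}, \Cref{prop:lin}, \Cref{prop:qR}) reduces invariance for $W$ to invariance for $W_J$ plus the enhanced invariance of \Cref{conj:R}, and this implication is exactly the content of \Cref{prop:impl}. You also correctly locate the only place the chain currently closes — type $A$, through the hypercube formula of \Cref{thm:B} and \Cref{thm:sncic} — and you name the genuine gap: outside symmetric groups there is no known graph-intrinsic way to single out a parabolic enhancement (a canonical $J$, or a canonical coloring), so one cannot even formulate the induction step without the extra marking hypothesis that \Cref{conj:R} and \Cref{conj:weak} require. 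That gap is not a technicality you can patch; it is the substance of the conjecture, and your proposal correctly ends at it.

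Two smaller points. The inner induction on length gap must really be a joint induction on $P$- and $R$-polynomials: the recursion \eqref{eq:1} feeds in $R_{\sigma,\kappa}$ alongside $P_{\kappa,\omega}$, and the paper's own proof of \Cref{prop:impl} is accordingly cast in terms of $R$-invariance (which is classically equivalent to $P$-invariance). And your step that the Grojnowski--Haiman coefficients are "determined by the linear system of \Cref{prop:lin}" is arithmetically right, but the indexing of that system by $\kappa\cdot{}^{J}\sigma$ already presupposes that the $W_J$-coset of $\sigma$ can be read off the interval — this is not an auxiliary convenience but the very obstruction you then name, so it folds back into the same gap rather than constituting independent progress.
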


It is straightforward to verify the validity of the above conjecture would be equivalent to its validity with $P_{\sigma_i,\omega_i}$ replaced by either $\check{P}_{\sigma_i,\omega_i}$, $R_{\sigma_i,\omega_i}$ or $\check{R}_{\sigma_i,\omega_i}$.

We propose a following variant of combinatorial invariance which pertains the notion of $J$-relative $R$-polynomials.

\begin{conjecture}[Relative combinatorial invariance]\label{conj:R}
For any finite Coxeter system $(W,S)$, there exists an assigned subset $\mathbb{J}(W)\subsetneq S$ so that the following property holds.

Suppose that $\sigma_1,\omega_1\in W_1$, $\sigma_2,\omega_2\in W_2$ are two pairs in finite Coxeter systems $(W_1,S_1)$, $(W_2,S_2)$. Suppose that there is an isomorphism $\phi$ of Bruhat intervals $G[\sigma_1,\omega_1], G[\sigma_2,\omega_2]$ as directed graphs, such that for any edge $\sigma_1 \leq z < tz \leq \omega_1$ in $G[\sigma_1,\omega_1]$, the condition
\[
t\in (W_1)_{\mathbb{J}(W_1)},\;\mbox{ if and only if, }\; \phi(tz)\phi(z)^{-1} \in (W_2)_{\mathbb{J}(W_2)}
\]
holds.

Then, $R_{\sigma_1,\omega_1, \mathbb{J}(W_1)} = R_{\sigma_2,\omega_2, \mathbb{J}(W_2)}$.
\end{conjecture}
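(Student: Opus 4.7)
The plan is to establish the conjecture in the setting of symmetric groups by declaring $\mathbb{J}(S_n) := \{s_1,\ldots,s_{n-2}\}$, so that $(S_n)_{\mathbb{J}(S_n)} = S_{n-1}$ and the non-$\mathbb{J}$ reflections are exactly the cycles $\{t_i = (i,n)\}_{i=1}^{n-1}$. The guiding idea is that the hypercube-style formula
\[
R_{\sigma,\omega,\mathbb{J}(S_n)} \;=\; \sum_{\substack{B \in \mathcal{P}(\sigma) \\ \sigma^B = \omega}} (q-1)^{|B|}
\]
from \Cref{prop:normRj} can be evaluated using only data intrinsic to the marked directed graph $G[\sigma,\omega]$, with the marking distinguishing $\mathbb{J}$-labelled edges from non-$\mathbb{J}$-labelled edges.

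First, I would recognize the indexing set combinatorially. By definition, $\mathcal{P}(\sigma)$ consists of all subsets of $\mathcal{D}(\sigma) = \{i : \sigma < t_i \sigma\}$, and this set is in natural bijection with the out-edges at $\sigma$ in $G[\sigma,\omega]$ whose labels lie outside $(S_n)_{\mathbb{J}(S_n)}$. The hypothesis that the isomorphism $\phi$ preserves the $\mathbb{J}$-coloring supplies a canonical bijection of these non-$\mathbb{J}$ out-edges at $\sigma_1$ with the analogous set at $\phi(\sigma_1) = \sigma_2$. Second, I would invoke \Cref{lem:supr} to replace $\sigma^B$ by the Bruhat supremum of $\{t_i \sigma\}_{i \in B}$ inside $[\sigma,\omega]$. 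Since the Bruhat order on this interval coincides with the transitive closure of the directed edge relation of $G[\sigma,\omega]$, suprema are intrinsic to the directed graph and are preserved by $\phi$. Combining these two observations, for every $B \in \mathcal{P}(\sigma_1)$ the corresponding summand $(q-1)^{|B|}$ is retained or discarded by the same criterion on both sides, yielding $R_{\sigma_1,\omega_1,\mathbb{J}(S_{n_1})} = R_{\sigma_2,\omega_2,\mathbb{J}(S_{n_2})}$.

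The main obstacle is the fact that $\mathcal{S}(\sigma)$ and the algorithmic map $B \mapsto B_\sigma$ underlying the definition of $\sigma^B$ are originally described through explicit manipulations of the permutation $\sigma$ rather than of its Bruhat interval; a naive attempt to transport these definitions through the graph isomorphism would be delicate, if at all feasible. The resolution is exactly the content of \Cref{lem:supr}, which eliminates any reference to the permutation structure by reinterpreting $\sigma^B$ as a Bruhat supremum. Once this substitution is made, the two ingredients that remain, namely the $\mathbb{J}$-coloring at $\sigma$ and the poset structure of the interval, transport transparently under $\phi$. The same template should orient a future attack on \Cref{conj:R} for other finite Coxeter systems, conditional on the availability of analogous hypercube-style formulas for $R_{\sigma,\omega,J}$ beyond type $A$.
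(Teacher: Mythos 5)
Your proposal is correct and follows essentially the same route the paper itself takes in proving the $S_n$ case of \Cref{conj:R} within the proof of \Cref{thm:sncic}: take $\mathbb{J}(S_n)=\{s_1,\ldots,s_{n-2}\}$ (the paper's printed $\{s_1,\ldots,s_{n-1}\}$ there is a typo, since $\mathbb{J}(W)\subsetneq S$ is required, and your choice matches the $J$ of \Cref{sect:Sn}), then combine \Cref{prop:normRj} with \Cref{lem:supr} to conclude that $R_{\sigma,\omega,\mathbb{J}(S_n)}$ is an invariant of the Bruhat interval graph together with the marking of edges labelled by transpositions $(i,n)$. Your write-up is somewhat more explicit than the paper's terse statement, in particular in noting that any $B$ contributing to the sum has all $t_i\sigma\leq\omega$ so that the $S_n$-join and the interval join coincide, and that the interval poset is the transitive closure of the directed edge relation; these are the points the paper leaves implicit.
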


For a Coxeter system $(W,S)$, we say that a choice of a chain
\[
\underline{J} = (\emptyset = J_0 \subsetneq J_1 \subsetneq \ldots \subsetneq J_r = S )
\]
is a \textit{filtration} on the system. We call $(W, \underline{J})$ a \textit{filtered} Coxeter system.

A filtration $\underline{J}$ gives rise to a \textit{reflection-coloring} map $\alpha: T\to \{1,\ldots, r\}$ defined by setting $\alpha(t)$, for $t\in T$, to be the minimal $i$ for which $t\in W_{J_i}$ holds.

In the context of combinatorial invariance, we view $\alpha$ as a coloring, by at most $|S|$ colors, of edges in all Bruhat intervals $G[\sigma,\omega]$, for $\sigma,\omega\in W$.

\begin{conjecture}[Filtered combinatorial invariance]\label{conj:weak}
For any finite Coxeter system $(W,S)$, there exists an assigned canonical filtration $\underline{J}^0$, so that the following property holds.

Let $\sigma_1,\omega_1\in W_1$, $\sigma_2,\omega_2\in W_2$ be two pairs in finite Coxeter systems that are equipped with their canonical filtrations that give rise to corresponding reflection-coloring maps $\alpha_1,\alpha_2$.

Suppose that there is an isomorphism $\phi$ of Bruhat intervals $G[\sigma_1,\omega_1], G[\sigma_2,\omega_2]$ as directed graphs.

Suppose further that there is an increasing map $\gamma: \{1,\ldots,r_1\}\to \{1,\ldots,r_2\}$, such that for any edge $\sigma_1 \leq z < tz \leq \omega_1$ in $G[\sigma_1,\omega_1]$, the color identity $\gamma\circ \alpha_1(t) = \alpha_2\left( t'\right)$ holds, where $t'\in W_2$ is the reflection with $\phi(tz) = t'\phi(z)$.

Then, the equality $P_{\sigma_1,\omega_1} = P_{\sigma_2,\omega_2}$ holds.

\end{conjecture}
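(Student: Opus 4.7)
The plan is to derive Conjecture~\ref{conj:R} directly from the hypercube description in \Cref{thm:B}, and then establish Conjecture~\ref{conj:weak} by induction on rank using the parabolic algorithm of \Cref{rem:1}. I assign $\mathbb{J}(S_n)=\{s_1,\ldots,s_{n-2}\}$ and the canonical filtration $J_i^{(n)}=\{s_1,\ldots,s_i\}$, so that a reflection $(a,b)\in S_n$ with $a<b$ receives color $b-1$.

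Conjecture~\ref{conj:R} follows almost immediately from \Cref{thm:B}. Each $B\in\mathcal{P}(\sigma,\omega)$ is encoded bijectively by its image set $\{(i,n)\sigma\}_{i\in B}$, a subset of the non-$\mathbb{J}(S_n)$-edge upward Bruhat neighbors of $\sigma$ in $[\sigma,\omega]$, subject to the requirement that $\omega$ is the Bruhat supremum of $\{\sigma\}\cup\{(i,n)\sigma\}_{i\in B}$. Both the available set of such neighbors and the supremum operation are intrinsic to the directed Bruhat graph together with the $\mathbb{J}$-edge marking. A graph isomorphism $\phi$ of the hypothesis therefore induces a cardinality-preserving bijection $\mathcal{P}(\sigma_1,\omega_1)\to\mathcal{P}(\sigma_2,\omega_2)$, which gives the equality of the associated $R$-polynomials. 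This argument applies equally to any cross-rank pair.

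For Conjecture~\ref{conj:weak}, I argue by strong induction on the rank $n$, with a secondary induction on $\ell(\omega)-\ell(\sigma)$. A preliminary reduction handles the cross-rank case: if the top color $n_i-1$ fails to appear on any edge of $G[\sigma_i,\omega_i]$, then no cover in the interval can change $(\cdot)^{-1}(n_i)$, so the interval lies in a single coset $W_J\cdot{}^J\sigma_i$, which by the standard parabolic Bruhat isomorphism $\zeta\mapsto\zeta\cdot{}^J\sigma_i$ identifies with a colored interval in $S_{n_i-1}$ carrying the inherited prefix of the canonical filtration; iterating this brings us to the situation where the top color appears on both sides, forcing $\gamma(n_1-1)=n_2-1$. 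Assume then $W_1=W_2=S_n$ with $\gamma=\mathrm{id}$ and apply \Cref{thm:A} with $J=J_{n-2}^{(n)}$. By \Cref{rem:1}, $P_{\sigma,\omega}$ is recursively computed from (i) the polynomials $R_{\sigma,\zeta,J}$ for $\zeta\in(\sigma,\omega]$, matched by Conjecture~\ref{conj:R}; (ii) the polynomials $P_{\zeta,\omega}$ for $\zeta\in(\sigma,\omega]$, matched by the secondary induction applied to the sub-intervals $[\zeta,\omega]$ with their inherited coloring; and (iii) the polynomials $P_{\sigma_J,\kappa}$ for $\kappa\in[\sigma_J,c]\subset W_J=S_{n-1}$, which match by the primary induction once we reconstruct the colored interval $[\sigma_J,c]$ purely from $G[\sigma,\omega]$.

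The decisive lemma is that $W_J\cdot{}^J\sigma\cap[\sigma,\omega]$ coincides with the connected component of $\sigma$ in the undirected subgraph of $G[\sigma,\omega]$ obtained by deleting all top-color edges, and that the induced directed colored subgraph on this component is isomorphic to the Bruhat graph of $[\sigma_J,c]\subset S_{n-1}$ with its canonical coloring. One inclusion is immediate, since any non-top-color edge is labeled by a reflection in $T\cap W_J$ and so preserves the coset $W_J\cdot{}^J\sigma$. For the reverse, the parabolic isomorphism $\zeta\mapsto\zeta\cdot{}^J\sigma$ identifies $W_J$ Bruhat-equivariantly with $W_J\cdot{}^J\sigma$ and restricts to a bijection $[\sigma_J,c]\to W_J\cdot{}^J\sigma\cap[\sigma,\omega]$; since any Bruhat interval is connected by its reflection edges in its own Coxeter group, the image inherits this connectivity through $T\cap W_J$-edges, which are precisely the non-top-color edges inside $G[\sigma,\omega]$. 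The colorings match because a reflection $(a,b)$ with $b<n$ receives color $b-1$ both in $S_n$ and in $S_{n-1}$. I expect the principal technical difficulty to lie in the careful justification of this lemma — specifically, verifying that every Bruhat edge internal to the abstract parabolic interval $[\sigma_J,c]\subset S_{n-1}$ is realized as an edge of the induced subgraph — after which the recursion of \Cref{rem:1} assembles $P_{\sigma,\omega}$ from inputs that are all combinatorially invariant in the appropriate sense.
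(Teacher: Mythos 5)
Your derivation of \Cref{conj:R} for symmetric groups from \Cref{thm:B} is correct and coincides with the paper's own argument (via \Cref{prop:normRj} and \Cref{lem:supr}); the observation that the sets $\mathcal{P}(\sigma,\omega)$ and the supremum operation are intrinsic to the marked Bruhat graph is exactly the point.

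The inductive step is where you diverge from the paper, and where the proposal has a genuine gap. After the preliminary reduction you write ``Assume then $W_1=W_2=S_n$ with $\gamma=\mathrm{id}$.'' This does not follow. The reduction shows, via \Cref{lem:coset}, that one may shrink $W_2$ to the parabolic $W_{J_{\gamma(r_1)}}$, after which $\gamma(r_1)=r_2$; but since $\gamma$ need not be surjective, one can still have $r_1<r_2$. A concrete surviving cross-rank pair: $G[e, s_1 s_2]$ in $S_3$ (a $4$-element Boolean interval with edge colors $1,2,2,1$) is isomorphic, via $\gamma(1)=1,\gamma(2)=3$, to $G[e,(1,2)(3,4)]$ in $S_4$ (edge colors $1,3,3,1$). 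Both use their top colors, so no further parabolic reduction applies, yet $n_1=3\neq 4=n_2$ and $\gamma\neq\mathrm{id}$. The paper's proof of \Cref{prop:impl} is careful to work with two possibly distinct systems $(W_1,S_1)$, $(W_2,S_2)$ and two parabolics $J=\mathbb{J}(W_1)$, $J'=\mathbb{J}(W_2)$ throughout, matching $R_{z,\omega_1,J}$ with $R_{\phi(z),\omega_2,J'}$ via the relative invariance hypothesis and then matching the $W_J$- and $W_{J'}$-data by the rank induction, without ever collapsing to $W_1=W_2$. Your argument, as written, would need the same cross-rank bookkeeping.

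Beyond this, your route through the $P$-polynomial algorithm of \Cref{rem:1} (the $Q^J+I^J$ decomposition together with the Grojnowski--Haiman linear system) is genuinely different from, and heavier than, what the paper does. The paper reduces to equality of $R$-polynomials first and then invokes the pure $R$-polynomial recursion of \Cref{prop:recJR},
\[
R_{\sigma,\omega} = \sum_{\kappa\in W_J\,:\, \sigma\leq\kappa\cdot{}^J\sigma\leq\omega} R_{\sigma_J,\kappa}\, R_{\kappa\cdot{}^J\sigma,\omega,J}\,,
\]
which requires only $R_{\sigma_J,\kappa}$ with fixed first index and avoids solving a linear system entirely. Your approach, by contrast, needs the full matrix of $W_J$-polynomials $P_{\zeta,\kappa}$ for $\sigma_J\leq\zeta\leq\kappa\leq c$ (not just $P_{\sigma_J,\kappa}$ as stated), since back-substitution in the unitriangular system produces $\gamma'_\kappa$ from those entries; this can be salvaged by applying the primary induction to all subintervals $[\zeta,\kappa]$ of the coset component, but it is more delicate than you acknowledge. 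Your ``decisive lemma'' identifying the coset component with $[\sigma_J,c]$ is correct and is essentially the paper's \Cref{lem:coset} refined to the coset slice of a non-coset interval, so that part is sound. In summary: a genuinely different (and workable) recursion, but with the cross-rank reduction step missing and the linear-system input understated.
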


\begin{remark}
It is clear that \Cref{conj:weak} follows from the full combinatorial invariance as in \Cref{conj:cic}. In case the full conjecture is assumed, the notion of a canonical filtration becomes redundant and may be chosen arbitrarily.
\end{remark}

In what follows we will make use of the following basic lemma, whose proof we supply for completeness.

\begin{lemma}\label{lem:coset}
For $J\subset S$, suppose that $\sigma,\omega\in W$ are such that ${}^J\sigma= {}^J \omega$ (i.e. $W_J\sigma = W_J \omega$).

Then, the Bruhat interval $G[\sigma_J, \omega_J]$ for the Coxeter system $(W_J,J)$ is isomorphic to the Bruhat interval $G[\sigma,\omega]$ through the map $\kappa\mapsto \kappa \cdot{}^J \sigma$.
\end{lemma}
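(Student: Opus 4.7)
The plan is to establish the prescribed map $\Phi:\kappa\mapsto \kappa\upsilon$, where $\upsilon := {}^J\sigma = {}^J\omega \in {}^J W$, as an isomorphism of directed graphs. Two things are to be verified: that $\Phi$ bijectively sends the vertex set $[\sigma_J,\omega_J]$ of $G[\sigma_J,\omega_J]$ onto the vertex set $[\sigma,\omega]$ of $G[\sigma,\omega]$, and that it transports edges in both directions.

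For the vertex bijection, I would invoke two classical facts about parabolic cosets (see, e.g., \cite{BB05}): the length formula $\ell(\kappa\upsilon) = \ell(\kappa) + \ell(\upsilon)$ valid for every $\kappa \in W_J$ because $\upsilon \in {}^J W$, and the fact that right multiplication by $\upsilon$ is an order embedding $W_J \hookrightarrow W$ relative to Bruhat orders. Together these imply that $\Phi$ maps $[\sigma_J,\omega_J]_{W_J}$ injectively and order-preservingly into $[\sigma,\omega]_W$. For the reverse inclusion, I would apply the standard lifting property for parabolic quotients, which asserts ${}^J x \leq {}^J y$ whenever $x \leq y$ in $W$. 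Applied to any $z \in [\sigma,\omega]_W$, the inequalities $\sigma \leq z \leq \omega$ sandwich $\upsilon \leq {}^J z \leq \upsilon$, forcing ${}^J z = \upsilon$. Hence $z = z_J\upsilon = \Phi(z_J)$ for a unique $z_J \in W_J$, and the order-embedding property then yields $z_J \in [\sigma_J,\omega_J]_{W_J}$.

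For edges, the crucial ingredient is the standard identification that $T \cap W_J$ coincides with the set of reflections of the parabolic Coxeter subsystem $(W_J,J)$. Given an edge $(\kappa_1,\kappa_2)$ of $G[\sigma_J,\omega_J]$, the element $\kappa_2\kappa_1^{-1}$ is a reflection in $W_J$ and hence lies in $T$, while the length formula gives $\ell(\Phi(\kappa_1)) < \ell(\Phi(\kappa_2))$; thus $(\Phi(\kappa_1),\Phi(\kappa_2))$ is an edge of $G[\sigma,\omega]$. Conversely, any edge $(z_1,z_2)$ of $G[\sigma,\omega]$ decomposes through the vertex analysis as $z_i = \kappa_i\upsilon$ with $\kappa_i \in [\sigma_J,\omega_J]_{W_J}$, so $z_2z_1^{-1} = \kappa_2\kappa_1^{-1}$ lies in $T \cap W_J$ and is therefore a reflection of $W_J$, producing the matching edge in $G[\sigma_J,\omega_J]$.

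The main obstacle is essentially bookkeeping: locating and correctly citing the handful of classical facts about standard parabolic subgroups used above --- the length additivity on $W_J\cdot{}^J W$, the lifting property ${}^J(\,\cdot\,) \leq {}^J(\,\cdot\,)$, and the reflection identity $T \cap W_J = T_{(W_J,J)}$. None of these introduces a genuine difficulty, and the graph isomorphism follows immediately once they are in place.
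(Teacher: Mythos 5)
Your proof is correct and takes essentially the same route as the paper's: both hinge on the order-preservation of the parabolic quotient map $z\mapsto{}^Jz$ (Björner--Brenti, Prop.\ 2.5.1) to pin down the vertex set, together with length additivity on $W_J\cdot{}^J W$ to transfer the edge relations. The only presentational difference is that you invoke the order-embedding of $W_J\hookrightarrow W$ via $\kappa\mapsto\kappa\upsilon$ as a known fact, whereas the paper reproves the needed direction of it by hand (conjugating the reflection $t$ to $\check t=\kappa^{-1}t\kappa\in T\cap W_J$).
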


\begin{proof}
The map $z\mapsto {}^J z$ on $W$ preserves the Bruhat order (\cite[Proposition 2.5.1]{BB05}). Hence, for any $\sigma\leq z\leq \omega$, we must have ${}^J z = {}^J \sigma$.

In particular, it follows that for any $\sigma \leq z < tz \leq \omega$, we must have $t\in W_J$.

Finally, it remains to note that for any $t\in T\cap W_J$ and $\kappa\in W_J$ such that $\kappa < t\kappa$, we also have $\kappa\cdot{}^J\sigma < t\kappa \cdot{}^J\sigma$. Indeed, $\check{t}:=\kappa^{-1}t \kappa \in T\cap W_J$, $\kappa < \kappa \check{t}$ and ${}^J\sigma< \check{t}\cdot{}^J\sigma$.

\end{proof}

\begin{proposition}\label{prop:impl}
  Conjecture \ref{conj:R} implies Conjecture \ref{conj:weak}.
\end{proposition}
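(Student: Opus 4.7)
I would define the canonical filtration $\underline{J}^0(W)$ recursively: place $S$ at the top, $\mathbb{J}(W) \subsetneq S$ as the penultimate level, and attach below the canonical filtration of $W_{\mathbb{J}(W)}$. With this design the canonical filtration of every parabolic $W_{J^0_k}$ is the truncation $(J^0_0,\ldots,J^0_k)$ of $\underline{J}^0(W)$, so the reflection-coloring of a parabolic agrees with the restriction of the ambient $\alpha$. The proof then proceeds by strong induction on $|S_1|+|S_2|$, given a color-preserving directed-graph isomorphism $\phi:G[\sigma_1,\omega_1]\to G[\sigma_2,\omega_2]$ compatible with $\gamma:\{1,\ldots,r_1\}\to\{1,\ldots,r_2\}$.

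\textbf{Main case.} Suppose first that $\gamma(r_1)=r_2$, so top-color edges on the left correspond under $\phi$ to top-color edges on the right; equivalently, $t\in(W_1)_{\mathbb{J}(W_1)}$ iff $\phi(tz)\phi(z)^{-1}\in(W_2)_{\mathbb{J}(W_2)}$. Setting $J_i=\mathbb{J}(W_i)$ and applying \Cref{prop:recJR}, I expand
\[
R_{\sigma_i,\omega_i} \;=\; \sum_{\kappa_i\in(W_i)_{J_i}} R_{(\sigma_i)_{J_i},\kappa_i}\cdot R_{\kappa_i\cdot{}^{J_i}\sigma_i,\,\omega_i,\,J_i}\;,
\]
and I match the two sums bijectively: any saturated chain from $\sigma_1$ to $\kappa_1\cdot{}^{J_1}\sigma_1$ uses only reflections of color $<r_1$, so its $\phi$-image starts at $\sigma_2$ and uses only colors $<r_2$, placing $\phi(\kappa_1\cdot{}^{J_1}\sigma_1)$ in the $W_{J_2}$-coset of $\sigma_2$ and identifying it as $\kappa_2\cdot{}^{J_2}\sigma_2$ for a unique $\kappa_2\in(W_2)_{J_2}$. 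The restriction $\phi|_{G[\kappa_1\cdot{}^{J_1}\sigma_1,\omega_1]}$ inherits the top-level parabolic condition, so \Cref{conj:R} equates the $J$-relative factors, while $\phi|_{G[\sigma_1,\kappa_1\cdot{}^{J_1}\sigma_1]}$, transferred through \Cref{lem:coset} to the parabolic systems $((W_i)_{J_i},J_i)$, furnishes a color-preserving isomorphism for the truncated canonical filtrations; the inductive hypothesis, applicable since $|J_1|+|J_2|<|S_1|+|S_2|$, equates the ordinary $R$-polynomial factors. The sums thus agree term-by-term.

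\textbf{Secondary case and anticipated obstacle.} When $\gamma(r_1)<r_2$, every edge of $G[\sigma_2,\omega_2]$ has color $\leq\gamma(r_1)$, so all reflections appearing belong to $(W_2)_{J^0_{\gamma(r_1)}}$; connectedness forces $\sigma_2$ and $\omega_2$ to lie in a common coset of that parabolic. \Cref{lem:coset} then identifies $(\sigma_2,\omega_2)$ with an equivalent pair in the smaller Coxeter system $((W_2)_{J^0_{\gamma(r_1)}},J^0_{\gamma(r_1)})$, preserving reflection labels, and hence colors by the truncation property; the standard invariance of Kazhdan-Lusztig $R$- and $P$-polynomials under such parabolic restriction (which follows from the path-sum of \Cref{prop:Rformula} and the recursion \eqref{eq:1} evaluated on the common Bruhat graph) allows passage to a strictly smaller datum, where induction closes the case. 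The main delicate point I anticipate is verifying that restrictions of $\phi$ to sub-intervals truly produce colored isomorphisms compatible with the canonical-filtration assignment; once the truncation property of $\underline{J}^0$ is in place, this reduces to the observation that \Cref{lem:coset} identifies corresponding edges through the same underlying reflection, so that the $\alpha_i$-labels attached by the ambient system and by the parabolic system coincide.
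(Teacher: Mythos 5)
Your proposal matches the paper's own proof in all essential respects: the same recursive definition of the canonical filtration $\underline{J}^0$ via $J_i=\mathbb{J}(W_{J_{i+1}})$, the same reduction to proving equality of $R$-polynomials by induction on rank, the same preliminary case disposing of $\gamma(r_1)<r_2$ by \Cref{lem:coset} and the vanishing of $J$-relative $R$-polynomials outside the top coset, and in the main case the same term-by-term matching of the two \Cref{prop:recJR} expansions using \Cref{conj:R} for the $J$-relative factors and the inductive hypothesis (transferred through \Cref{lem:coset}) for the $R$-factors from the parabolic subgroups. The only cosmetic differences are the induction variable ($|S_1|+|S_2|$ rather than $|S|$) and a slightly looser justification of the reduction step in the secondary case.
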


\begin{proof}
Suppose that relative combinatorial invariance holds, that is, for any finite Coxeter system $(W,S)$ a subset $\mathbb{J}(W)\subsetneq S$ is defined so that the condition in \Cref{conj:R} holds.

For such $(W,S)$ we assign a canonical filtration $\underline{J}^0 = \{J_i\}_{i=0}^r$ by taking $J_i = \mathbb{J}(W_{J_{i+1}})\subsetneq J_{i+1}$.

We will prove \Cref{conj:weak} by induction on the rank $|S|$ of the Coxeter system. As usual, it would suffice to prove equality of $R$-polynomials, since equality of Kazhdan-Lusztig polynomials would follow inductively from \eqref{eq:1}.

Now, suppose that pairs $\sigma_1,\omega_1\in W_1$, $\sigma_2,\omega_2\in W_2$, an isomorphism $\phi$ and a function $\gamma$ are given as is the assumptions of \Cref{conj:weak}.

If follows from the assumptions that no edges in $G[\sigma_2,\omega_2]$ are colored by a number greater than $\gamma(r_1)$. In other words, for all $t\in T_2$ and $z\in W_2$ with $\sigma_2\leq z < tz \leq \omega_2$, we have $\alpha_2(t) \leq \gamma(r_1)$. In particular, $\sigma_2\in W_{J_{\gamma(r_1)}}\omega_2$.


By \Cref{lem:coset}, we may identify $G[\sigma_2,\omega_2]$ with the Bruhat interval $G[(\sigma_2)_{J_{\gamma(r_1)}},(\omega_2)_{J_{\gamma(r_1)}}]$ in $W_{J_{\gamma(r_1)}}< W_2$. It is also easy to verify that $R_{\sigma_2,\omega_2} = R_{(\sigma_2)_{J_{\gamma(r_1)}},(\omega_2)_{J_{\gamma(r_1)}}}$ (For example, by taking the identity in \Cref{prop:recJR} and noting that $R_{\kappa \cdot {}^{J_{\gamma(r_1)}} \omega_2,\omega_2, J_{\gamma(r_1)}}=0$ according to \Cref{prop:Rformula}, for all $\kappa\in W_{J_{\gamma(r_1)}}$ unless $\kappa = (\omega_2)_{J_{\gamma(r_1)}}$.)

Thus, we may assume that $\gamma(r_1) = r_2$.

Let us denote $J = J_{r_1-1} = \mathbb{J}(W_1)$ and $J'= J_{r_2-1} = \mathbb{J}(W_2)$. For an edge $\sigma_1 \leq z < tz \leq \omega_1$, we see that the containment $t\in W_J$ is equivalent to $\alpha_1(t)\neq r_1$. By assumption on $\phi$, the last condition is also equivalent to $\alpha_2( \phi(tz)\phi(z)^{-1})\neq r_2$, which again means $\phi(tz)\phi(z)^{-1} \in W_{J'}$.

The assumed \Cref{conj:R} consequently implies that $R_{z, \omega_1, J} = R_{\phi(z), \omega_2, J'}$ holds, for all $\sigma_1\leq z\leq \omega_1$.

For a given $\sigma_1\leq z\leq \omega_1$ with ${}^J z= {}^J\sigma_1$, by \Cref{lem:coset} the Bruhat interval $G[\sigma_1,z]$ is isomorphic to $G[(\sigma_1)_J, z_J]$. When choosing a path in the graph, we can write $z= t_1\ldots t_s \sigma_1$ with $t_1,\ldots,t_s\in T_1\cap W_J$. Again, from the assumption on $\phi$ we deduce that there are $t'_1,\ldots, t'_s\in T_2 \cap W_{J'}$ for which $\phi(z) = t'_1\ldots t'_s \sigma_2$. Hence, ${}^{J'}\phi(z) = {}^{J'} \sigma_2$.

A similar argument on the inverse isomorphism $\phi^{-1}$ shows that in fact for any $\sigma_1\leq z\leq \omega_1$ with ${}^{J'}\phi(z) = {}^{J'} \sigma_2$ we will necessarily have ${}^J z= {}^J\sigma_1$.

By invoking \Cref{lem:coset} yet again on $G[\sigma_2,\phi(z)]$ and applying the induction hypothesis for the smaller rank Coxeter systems $(W_J,J)$ and $(W_{J'}, J')$, we deduce that $R_{(\sigma_1)_J,z_J} = R_{(\sigma_2)_{J'}, \phi(z)_{J'}}$.

Finally, by \Cref{prop:recJR},
\begin{equation*}
\begin{aligned}
R_{\sigma_1,\omega_1} &= \sum_{\kappa\in W_J\;:\; \sigma_1 \leq \kappa\cdot{}^J\sigma_1 \leq \omega_1 } R_{(\sigma_1)_J, \kappa} R_{\kappa\cdot {}^J\sigma_1, \omega_1, J}\\
 &= \sum_{\kappa\in W_J\;:\; \sigma_1 \leq \kappa\cdot{}^J\sigma_1 \leq \omega_1 } R_{(\sigma_2)_{J'}, \phi(\kappa{}^J\sigma_1)_J} R_{\phi(\kappa \cdot{}^J\sigma_1), \omega_2, J'}\\
&= \sum_{\kappa'\in W_{J'}\;:\; \sigma_2 \leq \kappa'\cdot{}^{J'}\sigma_2 \leq \omega_2 } R_{(\sigma_2)_{J'}, \kappa'} R_{\kappa' \cdot{}^J\sigma_2, \omega_2, J'}\\ &= R_{\sigma_2,\omega_2} \;.
\end{aligned}
\end{equation*}

\end{proof}

We also take a simple note that both our newly proposed conjectures are in fact weak variants of the full combinatorial invariance conjecture.

\begin{proposition}
  \Cref{conj:cic} implies \Cref{conj:R}.
\end{proposition}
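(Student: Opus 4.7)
The plan is to reduce the relative statement directly to the classical one via the inverse relation of \Cref{prop:recJR}. For each finite Coxeter system $(W,S)$ I assign $\mathbb{J}(W)\subsetneq S$ arbitrarily (any proper subset works; even $\mathbb{J}(W)=\emptyset$ would do, since then the content of \Cref{conj:R} collapses to the well-known reformulation of \Cref{conj:cic} for $R$-polynomials). Fix pairs $\sigma_1,\omega_1\in W_1$ and $\sigma_2,\omega_2\in W_2$ together with an isomorphism $\phi$ as in the hypothesis of \Cref{conj:R}, and write $J_i:=\mathbb{J}(W_i)$; the target is then $R_{\sigma_1,\omega_1,J_1}=R_{\sigma_2,\omega_2,J_2}$.

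The first and essential step will be to verify that $\phi$ restricts to a bijection
\[
\phi:\;W_{J_1}\sigma_1\cap[\sigma_1,\omega_1]\;\xrightarrow{\sim}\;W_{J_2}\sigma_2\cap[\sigma_2,\omega_2].
\]
Given $z$ in the left-hand side, \Cref{lem:coset} tells us that every element of $[\sigma_1,z]$ lies in $W_{J_1}\sigma_1$ and that every edge along any saturated chain from $\sigma_1$ to $z$ is labelled by a reflection inside $W_{J_1}$. The hypothesis on $\phi$ then converts such a chain into a saturated chain from $\sigma_2$ to $\phi(z)$ whose edges are labelled by reflections in $W_{J_2}$, and multiplying along this chain yields $\phi(z)\in W_{J_2}\sigma_2$. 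The reverse inclusion follows from the symmetric argument applied to $\phi^{-1}$, which also satisfies the hypothesis of \Cref{conj:R}.

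For each such $z$, the map $\phi$ now restricts to two Bruhat-interval isomorphisms $G[\sigma_1,z]\cong G[\sigma_2,\phi(z)]$ and $G[z,\omega_1]\cong G[\phi(z),\omega_2]$. Composing the first of these with the isomorphism supplied by \Cref{lem:coset} on either side produces an isomorphism
\[
G[(\sigma_1)_{J_1},z_{J_1}]\;\cong\;G[(\sigma_2)_{J_2},\phi(z)_{J_2}]
\]
in the smaller Coxeter systems $(W_{J_1},J_1)$ and $(W_{J_2},J_2)$. Invoking the assumed \Cref{conj:cic} (equivalently, its $R$-polynomial reformulation) on both isomorphisms then yields
\[
R_{(\sigma_1)_{J_1},z_{J_1}}=R_{(\sigma_2)_{J_2},\phi(z)_{J_2}}\qquad\text{and}\qquad R_{z,\omega_1}=R_{\phi(z),\omega_2}.
\]

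To conclude, I will apply the inverse relation from \Cref{prop:recJR}, which after reindexing the sum via $z=\kappa\cdot{}^{J_i}\sigma_i$ reads
\[
R_{\sigma_i,\omega_i,J_i}\;=\;\sum_{z\in W_{J_i}\sigma_i\cap [\sigma_i,\omega_i]} d\bigl(R_{(\sigma_i)_{J_i},\,z_{J_i}}\bigr)\,R_{z,\omega_i}.
\]
The bijection from the first step identifies the $i=1$ summation index with the $i=2$ one, and the two displayed equalities above match the corresponding summands term-by-term. The only substantive obstacle in the argument is the bijection claim; once this is in place the rest is routine bookkeeping with \Cref{lem:coset} and \Cref{prop:recJR}.
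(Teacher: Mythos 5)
Your proof is correct and follows the same approach the paper sketches: the paper's one-sentence argument simply points to the inverse relation in \Cref{prop:recJR} together with ``a similar argument as in the proof of \Cref{prop:impl}'', and you supply exactly the missing details --- the coset-bijection under $\phi$, the restricted interval isomorphisms via \Cref{lem:coset}, and the term-by-term matching of the reindexed inverse-relation sum. Your opening observation that choosing $\mathbb{J}(W)=\emptyset$ already collapses \Cref{conj:R} to the standard $R$-polynomial reformulation of \Cref{conj:cic} is a valid shortcut the paper does not mention; the fuller argument you give is what shows the implication is independent of how $\mathbb{J}$ is assigned.
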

\begin{proof}
The inverse relation in \Cref{prop:recJR} expresses $J$-relative $R$-polynomials in terms of $R$-polynomials for $W$ and $W_J$, as long as $W_J$ cosets in $W$ may be identified within the Bruhat intervals. Therefore, a similar argument as in the proof of \Cref{prop:impl} would suffice.
\end{proof}

To conclude this section, we will document how the hypercube decomposition presented in Theorem \ref{thm:wilmson}, as well as its elaboration in Section \ref{sect:Sn}, impact our recently suggested adaptations of combinatorial invariance.

\begin{theorem}[Filtered combinatorial invariance for symmetric groups]\label{thm:sncic}
Let $\sigma_1,\omega_1\in S_{n_1}$ and $\sigma_2,\omega_2\in S_{n_2}$ be two pairs of permutations.

Suppose that there is an isomorphism $\phi$ of Bruhat intervals $G[\sigma_1,\omega_1], G[\sigma_2,\omega_2]$ as directed graphs.

Suppose further that there is an increasing map $\gamma:\{1,\ldots,n_1\} \to \{1,\ldots,n_2\}$ such that the following property holds:

For any edge $\sigma_1\leq z < (i,j) z \leq \omega_1$ with $1\leq i<j\leq n_1$, so that $\phi((i,j) z) = (i',j')\phi(z)$ with $1\leq i' < j'\leq n_2$, we must have $j' = \gamma(j)$.

Then, the equality $P_{\sigma_1,\omega_1} = P_{\sigma_2,\omega_2}$ holds.
\end{theorem}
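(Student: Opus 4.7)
The plan is to view \Cref{thm:sncic} as the specialization of the filtered combinatorial invariance conjecture (\Cref{conj:weak}) to the class of symmetric groups, equipped with the canonical filtration generated by the assignment $\mathbb{J}(S_n) = \{s_1,\ldots,s_{n-2}\}$. Iterating this assignment produces a flag $J_i = \{s_1,\ldots,s_{i-1}\}$ with parabolics $W_{J_i} = S_i$, and the resulting reflection-coloring sends $(a,b)\in S_n$ with $a<b$ to $b$ (up to a shift of the indexing). The hypothesis $j' = \gamma(j)$ of \Cref{thm:sncic} is then precisely the coloring-compatibility condition of \Cref{conj:weak} after this reindexing, and since the parabolics occurring along the filtration remain symmetric groups, the inductive strategy of \Cref{prop:impl} stays within this class.

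Following that strategy, the theorem reduces to verifying the relative combinatorial invariance of \Cref{conj:R} for every $S_n$ with $\mathbb{J}(S_n) = \{s_1,\ldots,s_{n-2}\}$: given pairs $\sigma_i,\omega_i \in S_{n_i}$ and a directed-graph isomorphism $\phi : G[\sigma_1,\omega_1] \to G[\sigma_2,\omega_2]$ that pairs up edges according to the condition ``$t \in W_{J_1}$ iff $\phi(tz)\phi(z)^{-1} \in W_{J_2}$'', one must show $R_{\sigma_1,\omega_1,J_1} = R_{\sigma_2,\omega_2,J_2}$. The heart of the proof is to read this off from the hypercube formula of \Cref{thm:B}, $R_{\sigma,\omega,J} = \sum_{B \in \mathcal{P}(\sigma,\omega)} (q-1)^{|B|}$. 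Although $B$ is presented as a subset of $\{1,\ldots,n-1\}$, the assignment $i \mapsto (i,n)\sigma$ reinterprets $\mathcal{P}(\sigma,\omega)$ intrinsically as the collection of subsets of color-$n$ upper neighbors of $\sigma$ inside $[\sigma,\omega]$ whose supremum is $\omega$. Since $\omega$ is the maximum of the interval, the supremum taken in $S_n$ coincides with the one taken in $[\sigma,\omega]$, so this description is purely a function of the (colored) Bruhat interval.

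The Bruhat order on the interval is recoverable from the directed graph via existence of paths, hence $\phi$ is an isomorphism of Bruhat posets and preserves suprema; combined with the color-matching hypothesis, which identifies color-$n_1$ with color-$n_2$ edges, it yields a cardinality-preserving bijection $\mathcal{P}(\sigma_1,\omega_1) \simeq \mathcal{P}(\sigma_2,\omega_2)$ and thereby the required equality of $J$-relative $R$-polynomials.

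The remaining work is the inductive bookkeeping of \Cref{prop:impl}: the identity in \Cref{prop:recJR} reconstructs $R_{\sigma,\omega}$ from the $J$-relative $R$-polynomials established above together with $R$-polynomials inside the parabolic $S_{n-1}$, which are supplied by the induction once the interval $G[(\sigma_i)_{J_i},(\omega_i)_{J_i}]$ is identified with the appropriate coset interval via \Cref{lem:coset}. I expect the only mildly delicate step to be the case $\gamma(n_1) < n_2$, where no color-$n_2$ edge appears in $G[\sigma_2,\omega_2]$ and one first reduces $[\sigma_2,\omega_2]$ into a strictly smaller symmetric group before comparing top layers. The main conceptual obstacle to surmount is the intrinsic interpretation of $\mathcal{P}(\sigma,\omega)$ from the colored Bruhat graph; once that is in hand, the rest is either inductive or routine reindexing between the conventions of \Cref{thm:sncic}, \Cref{conj:weak}, and the canonical filtration.
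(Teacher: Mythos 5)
Your proposal is correct and follows essentially the same route as the paper: reduce \Cref{conj:weak} to \Cref{conj:R} for symmetric groups via the machinery of \Cref{prop:impl} and \Cref{prop:recJR}, then verify \Cref{conj:R} by invoking \Cref{prop:normRj} together with \Cref{lem:supr} (equivalently, \Cref{thm:B}) to realize $R_{\sigma,\omega,\mathbb{J}(S_n)}$ as an invariant of the Bruhat interval graph with the color-$n$ edges marked. Your extra attention to the $\gamma(n_1) < n_2$ case and to the intrinsic poset interpretation of $\mathcal{P}(\sigma,\omega)$ simply spells out what the paper compresses into ``slightly unraveling the proof of \Cref{prop:impl}.''
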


\begin{proof}
For the Coxeter system $(S_n, \{s_1,\ldots, s_n\})$, let us define $\mathbb{J}(S_n)= \{s_1,\ldots,s_{n-1}\}$. With this convention it is enough to prove \Cref{conj:R} for cases when the involved Coxeter systems are symmetric groups.

Indeed, the desired statement is \Cref{conj:weak} for those cases. It would follow after slightly unraveling the proof of \Cref{prop:impl} and noting that $W_{\mathbb{J}(W)}\cong S_{n-1}$ for $W\cong S_n$.

Now, \Cref{conj:R} for those case holds, as a consequence of \Cref{prop:normRj} and \Cref{lem:supr}, since they imply that $R_{\sigma,\omega, \mathbb{J}(S_n)}$ is an invariant of the Bruhat interval graph and a marking of the distinguished edges of the form $\sigma \leq z < (i,n)z \leq \omega$.

\end{proof}

\bibliographystyle{alpha}
\bibliography{ref}

\end{document}